\newcounter{example}[section]
\newenvironment{example}[1][]{\refstepcounter{example}\par\medskip
	\textbf{Example~\thesection.\theexample. #1} \rmfamily}{\medskip}
\theoremstyle{plain}
\newtheorem{theorem}{Theorem}[section]
\newtheorem{lemma}[theorem]{Lemma}
\newtheorem{definition}[theorem]{Definition}
\theoremstyle{remark}
\newtheorem{remark}[theorem]{Remark}
\begin{document}
\allowdisplaybreaks[4]
\numberwithin{figure}{section}
\numberwithin{table}{section}
 \numberwithin{equation}{section}
%
\title[Pointwise adaptive quadratic DG FEM for elliptic obstacle problem]
 {Supremum-norm a posteriori error control of quadratic discontinuous Galerkin methods for the obstacle problem}
 \author{Rohit Khandelwal}\thanks{The first author's work is supported  by the Council for Scientific and Industrial Research (CSIR)}
 \email{rohitkhandelwal004@gmail.com}
 \address{Department of Mathematics, Indian Institute of Technology Delhi, New Delhi - 110016}
 \author{Kamana Porwal}\thanks{The second author's work is supported by Council for Scientific and Industrial Research (CSIR) Extramural Research Grant}
 \email{kamana@maths.iitd.ac.in}
 \address{Department of Mathematics, Indian Institute of Technology Delhi, New Delhi - 110016}
 \author{Ritesh Singla}
 \email{iritesh281@gmail.com}
 \address{Department of Mathematics, Indian Institute of Technology Delhi, New Delhi - 110016}

 \date{}
 \begin{abstract}
 	 We perform a posteriori error analysis in the supremum norm for the quadratic discontinuous Galerkin method for the elliptic obstacle problem. We define two discrete sets (motivated by Gaddam, Gudi and Kamana \cite{gaddam2021two}), one set having integral constraints and other one with the nodal constraints at the quadrature points, and discuss the pointwise reliability and efficiency of the proposed a posteriori error estimator. In the analysis, we employ a linear averaging function to transfer DG finite element space to standard conforming finite element space and exploit the sharp bounds on the Green’s function of the Poisson's problem.  Moreover, the upper and the lower barrier functions corresponding to continuous solution $u$ are constructed by modifying the conforming part of the discrete solution $u_h$ appropriately.  Finally, numerical experiments are presented to complement the theoretical results.

 \end{abstract}
\date{}
\keywords{Discontinuous Galerkin, Quadratic finite elements, Variational inequalities, Obstacle Problem, Supremum norm, A posteriori error analysis}
\subjclass{65N30, 65N15}
\maketitle
\allowdisplaybreaks
\def\R{\mathbb{R}}
\def\cA{\mathcal{A}}
\def\cK{\mathcal{K}}
\def\cN{\mathcal{N}}
\def\p{\partial}
\def\O{\Omega}
\def\Bar {\overline}
\def\bbP{\mathbb{P}}
\def\cV{\mathcal{V}}
\def\cM{\mathcal{M}}
\def\cT{\mathcal{T}}
\def\cE{\mathcal{E}}
\def \cW{\mathcal{W}}
\def \cV{\mathcal{V}}
\def\bF{\mathbb{F}}
\def\bC{\mathbb{C}}
\def\bN{\mathbb{N}}
\def\ssT{{\scriptscriptstyle T}}
\def\HT{{H^2(\O,\cT_h)}}
\def\mean#1{\left\{\hskip -5pt\left\{#1\right\}\hskip -5pt\right\}}
\def\
#1{\left[\hskip -3.5pt\left[#1\right]\hskip -3.5pt\right]}
\def\smean#1{\{\hskip -3pt\{#1\}\hskip -3pt\}}
\def\sjump#1{[\hskip -1.5pt[#1]\hskip -1.5pt]}
\def\jumptwo{\jump{\frac{\p^2 u_h}{\p n^2}}}

\section{Introduction}\label{sec1}
\par
\noindent
The obstacle problem, often considered as a prototype for a class of free boundary problems, models many phenomena such as phase transitions, jet flow, and
gas expansion in a porous medium (see \cite{rodrigues1987obstacle}). The elliptic obstacle problem is a nonlinear model that describes the vertical movement of a object restricted to lie above a barrier (obstacle) while subjected to a vertical force (with suitable boundary conditions). In general, in the obstacle problem, the given domain can be viewed as the union of the contact, non-contact and free boundary regions, which also play a crucial role in determining the regularity of the continuous solution $u$. Moreover, the location of the free boundary (the boundary of the domain where the object touches the given obstacle) is not a priori known, and therefore, it forms a part of the numerical approximation. This makes the finite element approximation of the obstacle problem an interesting subject as it offers challenges both in the theory and computation. The finite element analysis to the obstacle problem germinated in 1970s (see \cite{Falk:1974:VI,brezzi1977error}). Subsequently, there has been a lot of work on the development and analysis of finite element methods for the obstacle problem (see \cite{glowinski1984numerical,hoppe1994adaptive,brenner2012finite}).
\par
\noindent 
Adaptive finite element methods (AFEM) are useful for achieving the better accuracy of numerical solution of the obstacle problem. The key tool in the adaptive refinement algorithms is a posteriori error estimator i.e., the (globally) reliable and an (locally) efficient error estimator.  AFEM are well built for the residual-type estimators \cite{verfurth1996review,ainsworth2011posteriori,bangerth2013adaptive} and for the goal oriented dual approach for solving partial differential equations (see \cite{pollock2012convergence} and references therein). We refer \cite{johnson1992adaptive,chen2000residual,veeser2001efficient,bartels2004averaging,braess2005posteriori,NPZ:2010:VI,GG:2018:InObst} for a posteriori error analysis of linear finite element methods for the elliptic obstacle problem. The articles \cite{wang2002quadratic} and \cite{gudi2015reliable} discussed a priori and a posteriori error analysis using conforming quadratic finite element method (FEM), respectively. The article \cite{gaddam2018bubbles} presented a priori and a posteriori error control for the three dimensional elliptic obstacle problem using quadratic conforming FEM. Therein, the analysis is carried out by enhancing the quadratic finite element space with element bubble functions. Note that, all these previous mentioned articles deal with the energy norm estimates. 

In the recent years, AFEM techniques for controlling pointwise $(L_{\infty})$ error for the elliptic problems have been the subject of several recent publications (see \cite{dari1999maximum,demlow2007local,demlow2012pointwise}). For the conforming linear finite element approximations of the elliptic obstacle problem in the maximum ($L_{\infty}$) norm, we refer to works \cite{nochetto2003pointwise,nochetto2005fully}, whereas in \cite{KP:2022:Obstacle}, pointwise a posteriori error estimates are derived using quadratic conforming FEM for the elliptic obstacle problem. 

\par
\noindent For the numerical approximation of a range of problems, discontinuous Galerkin (DG) finite element methods have proven appealing in the past decade. We refer the articles \cite{gudi2014posteriori,TG:2014:VIDG1} for a posteriori error analysis using DG FEM in the energy norm for the elliptic obstacle problem. In \cite{BGP:2021}, the authors have derived pointwise a posteriori estimates of interior penalty method using linear finite elements for the obstacle problem.
Recently, in \cite{gaddam2021two}, the authors present two new ideas to solve the obstacle problem using discontinuous Galerkin FEM in the energy norm. 
The goal of this work is to derive the reliable and efficient pointwise a posteriori error estimates for the elliptic obstacle problem using quadratic discontinuous Galerkin finite element methods. \\

\par
\noindent
This paper is organized as follows. In Section \ref{sec2}, we introduce the continuous obstacle problem and the Green's function for the Poisson's problem. Section \ref{sec3} is devoted to notations and preliminaries. The main ingredients of the Section \ref{sec3} are the  local projection (interpolation) operator (introduced by Demlow \cite{demlow2012pointwise}) and the averaging (enriching) operator (motivated by Brenner \cite{brenner1996two})  and we discuss their  approximation properties which are useful in the subsequent analysis. In Section \ref{sec4}, we define two discrete convex sets $\cK_h^{(t)}$ for $t=1,2$ and introduce the discrete problems corresponding to each $\cK_h^{(t)}$. Section \ref{sec5} is dedicated to  a posteriori error analysis in the supremum norm. The key ingredients in proving the reliable and an efficient estimates in the supremum norm are the upper and lower barriers of the continuous solution $u$ and bounds on the Galerkin functional. We present the results of the numerical experiments in  Section \ref{sec6} illustrating the reliability and efficiency of the proposed error estimator.

\section{The continuous problem} \label{sec2} The model assumptions which we considered are the following.
\large \begin{itemize}
	
	\item $\Omega$ is a bounded polygonal domain in  $\mathbb{R}^d$ with boundary $\partial \Omega,$ where $d=\{2,3\}.$ 
	\item $V=H^1_0(\Omega)$ and $f \in L_{\infty}(\Omega).$
	\item The obstacle $\chi \in H^1(\Omega) \cap C(\bar{\Omega})$ be such that $\chi \leq 0$ on $\partial \Omega.$
	\item $a(u,v)= \int_{\Omega} \nabla u\cdot \nabla v~dx$ and $(f,v)= \int_{\Omega} fv~dx$.
	\item $\mathcal{K}=\{v \in V:v \geq \chi~ ~a.e.~\mbox{in}~ \Omega\}$.
	\item $(\cdot,\cdot)$ denotes the $L_2(\Omega)$ inner product.
	\item For any $1 \leq p \leq \infty$ and $D \subset \Omega$, we denote the norm on the space  $L_p(D)$ by $||\cdot||_{L_p(\Omega)}.$\\
\end{itemize}
The continuous problem is to find $u \in \mathcal{K}$ such that the following holds
\begin{equation} \label{SSSS}
a(u,u-v) \leq (f,u-v)~~~\forall v \in \mathcal{K}.\\
\end{equation}
 $\mathbf{\underline{Observations.}}$
\begin{itemize}
	\item $a(u,v)$ is a continuous bilinear form over $V$, symmetric and $V$-elliptic \cite{s1989topics}.
	\item $\mathcal{K}$ is a non empty, closed and convex subset of $V$ \cite{s1989topics}.
\end{itemize}
By the theory of elliptic variational inequality \cite{glowinski1980numerical}, the problem (\ref{SSSS}) has a unique solution.\\
Define $\sigma\in H^{-1}(\Omega)$ by
\begin{align} \label{CCC}
\langle \sigma, v \rangle = (f, v) - a(u,v)~ \mbox{for all}~ v \in V,
\end{align}
where $\langle \cdot, \cdot \rangle$ denotes the duality pair of $H^{-1}(\Omega)$ and $H^1_0(\Omega).$ The norm $||\cdot||_{-1}$ on $H^{-1}(\Omega)$ is defined by
\begin{align}
||\phi||_{-1}:= \underset{v\in H^1_0(\Omega),v \neq 0}{sup} \frac{\langle \phi,v \rangle}{||\nabla v||_{L_2(\Omega)}}.
\end{align}

\noindent
   From the equations \eqref{SSSS} and \eqref{CCC}, it follows
\begin{equation} \label{prop}
\langle \sigma ,v-u \rangle \leq 0 \quad \forall v \in \cK.
\end{equation}
\begin{remark} \label{sign}
	Let $v=u+\phi \in \cK$ in \eqref{prop} for $0 \leq \phi \in H^1_0(\Omega)$, we get $\sigma \leq 0.$ Moreover, if $u > \chi$ on some open subset $A \subset \Omega $, then $\sigma =0$ in $A$ ( see \cite{kinderlehrer2000introduction} for details). Moreover, if $\Omega$ is convex domain \cite{frehse1978smoothness,caffareli1980potential} and if $f \in L_{\infty}(\Omega)$ and $\chi \in H^1(\Omega) \cap C^{0,\beta}(\bar{\Omega})$, $0 < \beta \leq 1$, then $u \in H^1_0(\Omega) \cap C^{0,\alpha}(\bar{\Omega})$ where $0 < \alpha \leq \beta$, i.e., $u$ is H\"older continuous.
\end{remark}
\begin{remark}
	In the view of Riesz representation theorem	[\cite{kinderlehrer2000introduction}, Theorem 6.9], we note that $\sigma$ can be treated as a radon measure which satisfies
	\begin{align*}
	\langle \sigma, v \rangle = \int_{\Omega} v d\sigma,
	\end{align*}
	where $v$ is a continuous function having compact support.
\end{remark}

\noindent

\subsection{Green's function for linear Poisson problem} 
The Green's function is commonly employed while performing a posteriori error analysis in the maximum norm. We refer \cite{nochetto1995pointwise,dari1999maximum,nochetto2003pointwise,demlow2012pointwise,demlow2016maximum} for some work in this area. We state the existence of the Green's function for the laplacian operator and we refer \cite{gruter1982green,hofmann2007green} for the proof.
\begin{theorem} \label{Green}
There exist a unique function $G : \Omega \times \Omega \rightarrow \mathbb{R} \cup \{\infty\}$ such that for any $y \in \Omega$, the following holds
\begin{align} \label{greeen}
a(G(y,\cdot), \phi) &= \phi(y) \quad \text{in}~\Omega, \\ G(y, \cdot)&= 0 \quad \text{on}~ \partial \Omega, \nonumber
\end{align}
for any $\phi \in D(\Omega)$. Moreover for any $q>d$ and $g \in W^{1,q}(\Omega) \cap H^1_0(\Omega)$, we have
\begin{align} \label{eqq}
g(y)= a(w,G(y, \cdot)).\end{align} 
\end{theorem}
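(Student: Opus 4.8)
The plan is to construct $G(y,\cdot)$ explicitly from the fundamental solution of the Laplacian plus a harmonic corrector, record its basic integrability, and then obtain the representation formula \eqref{eqq} by a density argument.

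\textbf{Construction.} Let $\Phi$ denote the fundamental solution of $-\Delta$ on $\mathbb{R}^d$ (so $\Phi(x)=-\tfrac{1}{2\pi}\log|x|$ for $d=2$ and $\Phi(x)=c_d|x|^{2-d}$ for $d=3$), which satisfies $-\Delta\Phi=\delta_0$ in the sense of distributions. Fix $y\in\Omega$. Since $y$ lies at positive distance from $\partial\Omega$, the restriction $\Phi(\cdot-y)|_{\partial\Omega}$ is smooth, and because $\Omega$ is a bounded polygonal domain every boundary point is regular for the Dirichlet problem; hence there is a unique $h_y\in C(\bar\Omega)\cap H^1(\Omega)$, harmonic in $\Omega$, with $h_y=\Phi(\cdot-y)$ on $\partial\Omega$. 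Put $G(y,x):=\Phi(x-y)-h_y(x)$. Then $G(y,\cdot)=0$ on $\partial\Omega$ and $-\Delta G(y,\cdot)=\delta_y$ distributionally, which is exactly \eqref{greeen}: for $\phi\in D(\Omega)$ one has $a(G(y,\cdot),\phi)=\langle-\Delta G(y,\cdot),\phi\rangle=\phi(y)$.

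\textbf{Uniqueness and integrability.} If $G_1,G_2$ both satisfy \eqref{greeen}, their difference is harmonic in $\Omega$ with zero boundary values and is bounded, hence vanishes by the maximum principle, giving uniqueness. Standard estimates on $\Phi$ give $\nabla\Phi(\cdot-y)\in L^p(\Omega)$ for every $p<d/(d-1)$, while $h_y$ is smooth in the interior with $\nabla h_y\in L^2(\Omega)$; thus $G(y,\cdot)\in W^{1,p}(\Omega)$ for all $1\le p<d/(d-1)$. In particular, for $q>d$ its conjugate exponent $q'=q/(q-1)<d/(d-1)$ yields $\nabla G(y,\cdot)\in L^{q'}(\Omega)$, so $a(w,G(y,\cdot))=\int_{\Omega}\nabla w\cdot\nabla G(y,\cdot)\,dx$ is well defined for every $w\in W^{1,q}(\Omega)$.

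\textbf{Representation formula.} Let $g\in W^{1,q}(\Omega)\cap H^1_0(\Omega)$ with $q>d$. By Morrey's embedding $g\in C^{0,1-d/q}(\bar\Omega)$, so $g(y)$ is meaningful. Choose $\phi_n\in D(\Omega)$ with $\phi_n\to g$ in $W^{1,q}(\Omega)$; then $\phi_n\to g$ uniformly on $\bar\Omega$, so $\phi_n(y)\to g(y)$, while $\nabla\phi_n\to\nabla g$ in $L^q(\Omega)$ together with $\nabla G(y,\cdot)\in L^{q'}(\Omega)$ and H\"older's inequality give $a(G(y,\cdot),\phi_n)\to a(G(y,\cdot),g)$. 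Passing to the limit in $a(G(y,\cdot),\phi_n)=\phi_n(y)$ and using the symmetry of $a(\cdot,\cdot)$ yields $g(y)=a(g,G(y,\cdot))$, i.e. \eqref{eqq} (with $w=g$).

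The main obstacle is the integrability step: pinning down the exact Lebesgue exponent of $\nabla G(y,\cdot)$ near the singularity $x=y$ and near the corners of the polygon, so that $q'$ falls in the admissible range and the pairing $\int_{\Omega}\nabla g\cdot\nabla G(y,\cdot)\,dx$ is absolutely convergent; once this is settled the density argument is routine. Alternatively, one can simply invoke the construction and the pointwise and gradient bounds for $G$ from \cite{gruter1982green,hofmann2007green} and proceed directly to the representation formula.
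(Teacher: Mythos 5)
The paper does not actually prove Theorem~\ref{Green}: it is stated as a known result and the proof is delegated entirely to the references \cite{gruter1982green,hofmann2007green}. Your proposal therefore does not so much diverge from the paper's argument as supply one where the paper supplies none, and the argument you sketch (fundamental solution plus harmonic corrector, $W^{1,p}$ integrability of $\nabla G(y,\cdot)$ for $p<d/(d-1)$, then a density argument against $D(\Omega)$ to pass from \eqref{greeen} to \eqref{eqq}) is exactly the classical construction underlying those references. The exponent bookkeeping is right: $q>d$ forces $q'<d/(d-1)$, the singularity $|\nabla\Phi(x-y)|\sim|x-y|^{1-d}$ lies in $L^{q'}$, and on a bounded domain $\nabla h_y\in L^2\subset L^{q'}$, so the pairing in \eqref{eqq} converges absolutely. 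You also correctly read the $w$ in \eqref{eqq} as a typo for $g$.

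Two steps deserve to be made precise. First, your uniqueness argument asserts that the difference $G_1-G_2$ is bounded; that is not a consequence of \eqref{greeen} alone, and uniqueness of the Green's function only holds within a specified class (e.g.\ $G(y,\cdot)\in W^{1,p}_0(\Omega)$ for all $p<d/(d-1)$, as in \cite{gruter1982green}) --- the theorem statement itself is silent on this, so you should either fix the class or phrase uniqueness for the constructed $G$. Second, the density step implicitly uses that $W^{1,q}(\Omega)\cap H^1_0(\Omega)=W^{1,q}_0(\Omega)$, i.e.\ that zero $H^1$-trace plus $W^{1,q}$ regularity permits approximation by $D(\Omega)$ in the $W^{1,q}$ norm; this is true on Lipschitz (hence polygonal) domains but should be cited rather than taken for granted. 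Neither point is a fatal gap, and with those repairs your outline is a legitimate self-contained proof of the statement the paper only quotes.
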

\noindent
Note that $G$ has a singularity at $y \in \Omega$. The next lemma states some regularity estimates for the Green's function $G$ which takes into account it's singular behavior. We refer the articles \cite{demlow2012pointwise,demlow2016maximum} for the details.
\begin{lemma} \label{Grn}
	Let $G$ be defined  in equation \eqref{greeen}, then for any $y \in \Omega$, it holds that
	\begin{align}
	|G(y,\cdot)|_{0,1,\Omega} + |G(y,\cdot)|_{0,\frac{d}{d-1},\Omega} +|G(y,\cdot)|_{1,1,\Omega} \lesssim 1.
	\end{align}
	Moreover, for the ball $B(y,R)$ having centered at $y$ and of radius $R$, the following estimates hold
	\begin{align*}
 	|G(y,\cdot)|_{1,\frac{d}{d-1},(\Omega \setminus B(y,R))} & \lesssim \text{log}(2+R^{-1}), \\ 	|G(y,\cdot)|_{2,1,(\Omega \setminus B(y,R))} & \lesssim \text{log}(2+R^{-1}), \\	|G(y,\cdot)|_{0,1,(\Omega \cap B(y,R))} &\lesssim R^2 \text{log}(2+R^{-1})^{\alpha_d}, \\ 	|G(y,\cdot)|_{1,1,(\Omega \cap B(y,R))} & \lesssim R,
	\end{align*}
	where $\alpha_2=1$ and $\alpha_3=0.$
\end{lemma}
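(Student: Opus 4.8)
The plan is to reduce each seminorm to an elementary radial integral by comparing $G(y,\cdot)$ with the fundamental solution of $-\Delta$ on $\R^d$. Writing $G(y,\cdot)=\Phi(\cdot-y)-h_y$, where $\Phi(z)$ behaves like $\log(2+|z|^{-1})$ for $d=2$ and like $|z|^{2-d}$ for $d=3$, and $h_y$ is the harmonic corrector with $h_y=\Phi(\cdot-y)$ on $\partial\O$, I would first establish the classical Gr\"uter--Widman pointwise bounds $|G(y,x)|\lesssim\psi_d(|x-y|)$ and $|\nabla_x G(y,x)|\lesssim|x-y|^{1-d}$, with $\psi_2(r)=\log(2+r^{-1})$ and $\psi_3(r)=r^{-1}$ (see \cite{gruter1982green}); these hold on any bounded Lipschitz, in particular polygonal, domain, the corrector being controlled by the maximum principle and scaled barriers near $\partial\O$. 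Differentiating the equation for $G$ and applying interior estimates to the harmonic function $\nabla_x G(y,\cdot)$ on $B(x,|x-y|/2)$, together with boundary regularity for the Laplacian when that ball meets $\partial\O$, then yields the Hessian bound $|D^2_x G(y,x)|\lesssim|x-y|^{-d}$.

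With these pointwise estimates in hand, each inequality of the lemma follows by passing to polar coordinates centred at $y$. The three global bounds reduce to $\int_0^{\mathrm{diam}(\O)}\psi_d(r)\,r^{d-1}\,dr\lesssim1$ and $\int_0^{\mathrm{diam}(\O)}r^{1-d}\,r^{d-1}\,dr=\mathrm{diam}(\O)\lesssim1$, the $L_{d/(d-1)}$ statement being the same computation since $\psi_d(r)^{d/(d-1)}r^{d-1}$ is integrable at $0$. Over $\O\setminus B(y,R)$ one uses $|x-y|\ge R$: for the gradient, $(r^{1-d})^{d/(d-1)}r^{d-1}=r^{-1}$, so the integral is $\lesssim\int_R^{\mathrm{diam}(\O)}r^{-1}\,dr\lesssim\log(2+R^{-1})$ and raising to the power $(d-1)/d$ only improves the bound; for the Hessian, $r^{-d}r^{d-1}=r^{-1}$ gives the same. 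Over $\O\cap B(y,R)$ one integrates the pointwise bounds directly: $\int_0^R r^{1-d}r^{d-1}\,dr=R$, and $\int_0^R\psi_d(r)\,r^{d-1}\,dr$ equals $R^2/2$ for $d=3$ and is $\lesssim R^2\log(2+R^{-1})$ for $d=2$ (from $\int_0^R r\log(2+r^{-1})\,dr$), which is exactly $R^2(\log(2+R^{-1}))^{\alpha_d}$ with $\alpha_2=1$, $\alpha_3=0$.

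The step I expect to be the real obstacle, and the reason the statement is quoted from \cite{demlow2012pointwise,demlow2016maximum} rather than reproved, is the \emph{uniform} validity of the pointwise bounds on $|G|$, $|\nabla_x G|$ and $|D^2_x G|$ up to $\partial\O$ --- including when the pole $y$ is itself near the boundary --- on a polygonal domain, where interior elliptic regularity fails near corners and $G$ may inherit a corner singularity beyond the fundamental-solution behaviour. I would handle this by invoking the Gr\"uter--Widman estimates for $|G|$ and $|\nabla_x G|$, which are valid on general bounded Lipschitz domains, and by exploiting the fact that $D^2_x G$ enters only inside the $L_1$-seminorm over $\O\setminus B(y,R)$: such a seminorm absorbs a local corner singularity of the type $|x-x_c|^{\lambda-2}$ with $\lambda\in(0,1]$, whose radial integral is again dominated by the $\log(2+R^{-1})$ already accounted for. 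Everything else is the routine polar-coordinate bookkeeping indicated above.
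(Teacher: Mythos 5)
The paper never proves this lemma: it is quoted from the literature (the text points to \cite{demlow2012pointwise,demlow2016maximum}, with existence from \cite{gruter1982green,hofmann2007green}), so there is no in-paper argument to compare against; your proposal has to stand on its own as a proof. Its second half does: the reduction of every stated seminorm to a radial integral is correct, and the computations $\int_R^{\mathrm{diam}(\Omega)}r^{-1}\,dr\lesssim\log(2+R^{-1})$, $\int_0^R r^{1-d}r^{d-1}\,dr=R$, $\int_0^R r\log(2+r^{-1})\,dr\lesssim R^2\log(2+R^{-1})$, etc., reproduce exactly the claimed right-hand sides, including the exponent $\alpha_d$.

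The genuine gap is in the pointwise inputs you feed into that bookkeeping. The assertion that the Gr\"uter--Widman gradient bound $|\nabla_xG(y,x)|\lesssim|x-y|^{1-d}$ holds up to the boundary ``on any bounded Lipschitz, in particular polygonal, domain'' is false for non-convex polygons: fix $y$ at distance of order one from a re-entrant corner of opening $\omega>\pi$; then $G(y,x)\sim|x-x_c|^{\pi/\omega}$ near that corner, so $|\nabla_xG(y,x)|\sim|x-x_c|^{\pi/\omega-1}\to\infty$ while $|x-y|^{1-d}$ stays bounded --- the Gr\"uter--Widman gradient estimate needs more boundary/coefficient regularity than Lipschitz. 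Likewise $|D^2_xG(y,x)|\lesssim|x-y|^{-d}$ fails at every corner of interior angle exceeding $\pi/2$, even on convex polygons, which you only partially acknowledge. Your proposed repair --- ``the $L_1$ seminorm absorbs a corner singularity $|x-x_c|^{\lambda-2}$'' --- is the right instinct but is not yet a proof: the constants in the lemma must be uniform in the pole $y$, including poles approaching corners and the boundary, and the corner contributions to $|G|_{2,1,\Omega\setminus B(y,R)}$ and $|G|_{1,d/(d-1),\Omega\setminus B(y,R)}$ must still be shown to be $\lesssim\log(2+R^{-1})$ in that uniform sense. That is precisely the nontrivial content of the cited references, which obtain these integrated bounds not from pointwise Hessian estimates but from a dyadic-annulus decomposition around $y$ combined with local elliptic (and corner-weighted) regularity on each annulus. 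As written, your argument assumes stronger pointwise estimates than are available on polygonal domains and only gestures at the fix, so the proof is incomplete exactly at the step you yourself identified as the obstacle.
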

\section{Notations and Preliminaries} \label{sec3}
\par
\noindent
In this section, we collect some notations and prerequisites for the upcoming analysis.
\begin{itemize}
	\item $\cT_h$ be a regular triangulation of $\Omega$ (see \cite{ciarlet2002finite})
	\item $T \in \cT_h$ denotes a triangle for $d=2$ or tetrahedron for $d=3$
	\item $h_T$ be the diameter of $T$~~, $h_{min}= \text{min} \{h_T~:~T \in \cT_h\}.$
     \item $\mathcal{V}_T$ denotes the set of all vertices of $T$
    \item  $\mathcal{M}_T$ denotes the set of all midpoints on the edges/faces of $T$
    \item $\mathcal{V}_h^i$denotes the  set of all interior vertices in $\cT_h$
    \item  $\mathcal{V}_h^b$ denotes the set of all boundary vertices of $\cT_h$
    \item $\mathcal{V}_h= \mathcal{V}^i_h \cup \mathcal{V}^b_h$ is the set of all the vertices of $\cT_h$
	\item $\mathcal{E}^i_h$ is the set of all interior edges/faces of $\cT_h$
	\item $\mathcal{E}^b_h$ is the set of all the boundary edges/faces of $\cT_h$
	\item $\mathcal{E}_h= \mathcal{E}^i_h \cup \mathcal{E}^b_h$ is the set of all the edges/faces of $\cT_h$
	\item $h= \max \{h_T~:~T \in \cT_h\}$
	\item $|e|$ is a length of an edge/face $e \in \mathcal{E}_h$
	\item $\cT_p$ denotes the set of all elements in $\cT_h$ that share the common vertex $p$
	\item $\cT_e$ denotes the set of all elements in $\cT_h$ that share the common edge/face $e$
	\item $|A|$ denotes the volume of the set $A \subset \Omega$
	\item $h_e$ is the length of an edge/face $e \in \mathcal{E}_h$
 
	\item $A \lesssim B$ means that there exist a positive constant $C$ (independent of the solution and the mesh parameter) such that $A \leq CB$
	\item $supp (g)$ denotes the support of the function $g$
	\item $P_k(T)$ is the linear space of polynomials of degree less than or equal to $k$ over $T$, where $k \in \mathbb{N} \cup \{0\}$
\end{itemize}

\noindent
We need to introduce the average and jump of the discontinous functions. First, we define the broken Sobolev space
\begin{align*}
H^1(\Omega, \cT_h)=\{v \in L_2(\Omega)~:~v|_T \in H^1(T) \quad \forall T \in \cT_h\}.
\end{align*}
Let $e$ be an interior edge/face in $\cT_h$, then there exist two elements $T^+$ and $T^-$ such that $e = \partial T^+ \cap \partial T^-$. Let $n^+$ be an unit outward normal pointing from $T^+$ to $T^-$, then we have $n^-=-n^+$. Hence, the jump and average of $v \in H^1(\Omega, \cT_h)$ on an edge/face $e$ is defined by
\begin{align*}
\{\{v\}\}&= \frac{v^++v^-}{2} \\ [[v]]&= v^+n^+ + v^-n^-,
\end{align*} respectively where $v^+=v|_{T^+}$ and $v^-=v|_{T^-}$. We follow the same idea to define the jump and average for a vectorvalued function $q \in [H^1(\Omega, \cT_h)]^d$ on interior edge/face $e$
\begin{align*}
\{\{q\}\}&= \frac{q^++q^-}{2} \\ [[q]]&= q^+\cdot n^+ + q^-\cdot n^-.
\end{align*} 
For $e \in \cE^b_h$,  let $n^e$ be an outward unit normal to an element $T$
 such that $\partial T \cap \partial \Omega =e$, we define for $v \in H^1(\Omega, \cT_h) $
 \begin{align*}
 [[v]]=v n^e \quad \text{and} \quad \{\{v\}\} = v
 \end{align*} and for $q \in [H^1(\Omega, \cT_h)]^d$, we set
  \begin{align*}
 [[q]]=q \cdot n^e \quad \text{and} \quad \{\{q\}\} = q.
 \end{align*}
 Let $V_h$ denotes the quadratic DG finite element space which is defined by
 \begin{equation*}
 V_{h}=\{v_h\in L_2({\Omega}) : v_h|_T\in P_2(T)\,\,\forall\, T\in
 {\mathcal T}_h\},
 \end{equation*}
 and $V_c$ denotes the conforming quadratic finite element space, i.e., $V_c:=V_h \cap H^1_0(\Omega)$. 
 The following estimates will be crucial in the upcoming analysis.
 
 \begin{lemma} (Trace inequality \cite{brenner2007mathematical}) \label{trace}
 	Let $\phi \in H^1(T)$ for $T \in \cT_h$ and let $e \in \mathcal{E}_h$ be an edge/face of $T$, then for $1 \leq p < \infty$ the following holds
 	\begin{equation}
 	||\phi||^p_{L_p(e)} \lesssim   h^{-1}_p\Big(|\phi||^p_{L_p(T)}+ h_e^{p}||\nabla \phi||^p_{L_p(T)}\Big ). \label{3.1}
 	\end{equation}
 \end{lemma}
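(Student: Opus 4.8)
The plan is to reduce the estimate to a fixed reference simplex by an affine change of variables and invoke there the continuity of the trace operator. First I would fix a reference simplex $\hat T$ with a distinguished face $\hat e$, and write the affine bijection $F_T(\hat x)=B_T\hat x+b_T$ with $F_T(\hat T)=T$ and $F_T(\hat e)=e$. Shape regularity of $\cT_h$ yields the standard bounds $\|B_T\|\lesssim h_T$, $\|B_T^{-1}\|\lesssim h_T^{-1}$, $|\det B_T|\sim h_T^{d}$, together with $|e|\sim h_T^{d-1}\sim h_e^{d-1}$.

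Next, setting $\hat\phi:=\phi\circ F_T\in H^1(\hat T)$ (after a density step, for $\hat\phi\in C^\infty(\overline{\hat T})$), the trace theorem on the \emph{fixed} domain $\hat T$ gives $\|\hat\phi\|_{L_p(\hat e)}^p\lesssim \|\hat\phi\|_{L_p(\hat T)}^p+\|\hat\nabla\hat\phi\|_{L_p(\hat T)}^p$ with a constant depending only on $\hat T$, $d$ and $p$. I would then transport each term back to $T$ via the change-of-variables formula: the surface integral contributes a Jacobian factor of order $h_T^{d-1}$, the volume integral a factor $|\det B_T|\sim h_T^{d}$, and the chain rule $\hat\nabla\hat\phi=B_T^{\mathsf T}(\nabla\phi)\circ F_T$ an extra $\|B_T\|^p\lesssim h_T^{p}$ in the gradient term. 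Collecting the powers of $h_T$ gives $\|\phi\|_{L_p(e)}^p\lesssim h_T^{-1}\bigl(\|\phi\|_{L_p(T)}^p+h_T^{p}\|\nabla\phi\|_{L_p(T)}^p\bigr)$, and since $h_T\sim h_e$ on a shape-regular mesh the stated form follows.

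An alternative route avoids scaling entirely: with $x_e$ the vertex of $T$ opposite to $e$, the field $\psi(x)=x-x_e$ satisfies $\psi\cdot n=0$ on the faces of $T$ adjacent to $x_e$ and $\psi\cdot n=\mathrm{dist}(x_e,e)=:h_{T,e}$ on $e$, so the divergence theorem applied to $|\phi|^p\psi$ yields $h_{T,e}\int_e|\phi|^p\,ds=\int_T\bigl(d\,|\phi|^p+p\,|\phi|^{p-1}\mathrm{sgn}(\phi)\,\nabla\phi\cdot\psi\bigr)\,dx$; bounding $|\psi|\le h_T$, using H\"older with exponents $p/(p-1)$ and $p$ followed by Young's inequality, and $h_{T,e}\gtrsim h_T\sim h_e$, gives the claim. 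In either approach the only real work is the bookkeeping of mesh-size powers, i.e.\ making the constant independent of $T$, which rests on shape regularity; the one genuine subtlety is that the literal statement for all $\phi\in H^1(T)$ invokes the embedding $H^1(\hat T)\hookrightarrow L_p(\hat e)$, which for $d=3$ caps the admissible $p$ — but in the applications below $\phi$ is a piecewise polynomial, so all norms on $\hat T$ are equivalent and no restriction is needed.
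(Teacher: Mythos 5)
The paper offers no proof of this lemma at all --- it is quoted as a standard result with a citation to Brenner--Scott --- so there is nothing to compare line by line; what matters is whether your argument is sound, and it is. The affine scaling to a reference simplex $\hat T$ with $\|B_T\|\lesssim h_T$, $|\det B_T|\sim h_T^d$, surface Jacobian $\sim h_T^{d-1}$, followed by the trace inequality on $\hat T$ and the bookkeeping $h_T\sim h_e$, is exactly the standard derivation of the scaled estimate \eqref{3.1} (the factor $h_p^{-1}$ in the paper's display is evidently a typo for $h_e^{-1}$, which is what your computation produces). The divergence-theorem alternative with $\psi(x)=x-x_e$ is also correct and has the merit of giving an explicit constant without invoking a reference-element trace theorem. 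One remark: your closing ``subtlety'' about the embedding $H^1(\hat T)\hookrightarrow L_p(\hat e)$ capping $p$ for $d=3$ is not actually needed. Since the right-hand side of \eqref{3.1} involves $\|\phi\|_{L_p(T)}$ and $\|\nabla\phi\|_{L_p(T)}$, the inequality is vacuous unless $\phi\in W^{1,p}(T)$, and then the correct tool on $\hat T$ is the $W^{1,p}$ trace estimate $\|\hat\phi\|_{L_p(\hat e)}^p\lesssim\|\hat\phi\|_{L_p(\hat T)}^p+\|\hat\nabla\hat\phi\|_{L_p(\hat T)}^p$, valid for every $1\le p<\infty$ in any dimension --- which is precisely the inequality you invoked in your main argument, so no restriction on $p$ and no appeal to polynomial norm equivalence is required.
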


 \begin{lemma} (Inverse inequalities \cite{ciarlet2002finite})\label{inverse}
 	For $v_h \in V_h$ and $1 \leq p,q \leq \infty$, it holds that
 	\begin{align*}
 	||v_h||_{m,p,T} & \lesssim h_T^{d(\frac{1}{p}-\frac{1}{q})}  h^{l-m}_T ||v_h||_{l,q,T} \\ 
 	||v_h||_{L_{\infty}(e)} &\lesssim  h_e^{\frac{1-d}{2}}||v_h||_{L_2(e)}
 	\end{align*}
 	where $T \in \cT_h$, $e \in \mathcal{E}_h$ is an edge/face of $T \in \cT_h$.
 \end{lemma}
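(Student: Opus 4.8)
The plan is to prove both inequalities by the classical scaling argument, reducing each to the equivalence of norms on a finite-dimensional polynomial space on a fixed reference element.

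First I would fix a reference simplex $\hat T$ and, for each $T \in \cT_h$, write the affine bijection $F_T(\hat x) = B_T \hat x + b_T$ with $F_T(\hat T) = T$. Shape-regularity of $\cT_h$ gives the two-sided control $\|B_T\| \lesssim h_T$, $\|B_T^{-1}\| \lesssim h_T^{-1}$ and $|\det B_T| \sim h_T^d$. For $\hat v := v \circ F_T$, the standard change-of-variables estimates for Sobolev seminorms then yield $|v|_{m,p,T} \lesssim h_T^{d/p - m}\,|\hat v|_{m,p,\hat T}$ and, in the other direction, $|\hat v|_{l,q,\hat T} \lesssim h_T^{l - d/q}\,|v|_{l,q,T}$, with the convention $1/\infty = 0$ in the limiting cases. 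Since $v|_T \in P_2(T)$ forces $\hat v \in P_2(\hat T)$ and $P_2(\hat T)$ is finite dimensional, all norms on it are equivalent; in particular $\|\hat v\|_{m,p,\hat T} \lesssim \|\hat v\|_{l,q,\hat T}$ whenever $m \le l$, with a constant depending only on $\hat T, m, l, p, q, d$. Chaining the two scaling relations (applied to each seminorm component of $\|\cdot\|_{m,p,T}$ and $\|\cdot\|_{l,q,T}$) with this norm equivalence, and using $h_T \lesssim 1$ to absorb the mismatch between seminorms of different orders when $m < l$, I obtain $\|v\|_{m,p,T} \lesssim h_T^{d(1/p-1/q)} h_T^{l-m}\|v\|_{l,q,T}$.

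For the trace-type inverse estimate I would run the same template on a codimension-one face. Fix a reference face $\hat e$ of $\hat T$; the restriction of $F_T$ is an affine bijection $\hat e \to e$ with Jacobian of magnitude $\sim h_e$, so $\|\hat v\|_{L_\infty(\hat e)} \sim \|v\|_{L_\infty(e)}$ while $\|v\|_{L_2(e)} \sim h_e^{(d-1)/2}\|\hat v\|_{L_2(\hat e)}$. On the finite-dimensional trace space $\{\hat w|_{\hat e} : \hat w \in P_2(\hat T)\}$, norm equivalence gives $\|\hat v\|_{L_\infty(\hat e)} \lesssim \|\hat v\|_{L_2(\hat e)}$; combining the three relations produces $\|v\|_{L_\infty(e)} \lesssim h_e^{(1-d)/2}\|v\|_{L_2(e)}$.

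I do not expect a real obstacle here, since the entire content of the lemma is the bookkeeping of scaling exponents; this is essentially a restatement of a textbook result (Ciarlet). The points that need care are: invoking shape-regularity to get the two-sided bounds on $\|B_T\|$, $\|B_T^{-1}\|$ and $|\det B_T|$; treating the limiting cases $p = \infty$ and/or $q = \infty$ consistently; and, when $m < l$, remembering that $\|\cdot\|_{m,p,T}$ is a sum of seminorms of orders $0,\dots,m$, so that the term carrying the least favourable power of $h_T$ (together with boundedness of $h_T$) dictates the stated exponent.
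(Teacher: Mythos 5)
The paper offers no proof of this lemma at all---it simply cites Ciarlet---and your scaling argument (affine map $F_T$ to a reference simplex, shape-regularity bounds on $B_T$, $B_T^{-1}$, $\det B_T$, norm equivalence on the finite-dimensional spaces $P_2(\hat T)$ and its trace space on $\hat e$, then chaining the scaling exponents) is precisely the standard textbook proof the citation points to, so your proposal is correct and follows the intended route. One small caveat: your remark that $h_T\lesssim 1$ absorbs the seminorm mismatch when $m<l$ does not actually work for the full-norm form (constants with $m=0$, $l=1$, $p=q$ already violate $\|v\|_{0,p,T}\lesssim h_T\|v\|_{1,p,T}$), but this is immaterial since the estimate is meant, and is only used in the paper, in the genuinely inverse regime $m\ge l$ (and for the $L_\infty$--$L_2$ trace bound), where your bookkeeping of exponents is exactly right.
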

%

%
 \begin{lemma} (Poincare's type inequality [\cite{evans1998partial}, Theorem 3, page 279])\label{poincare}
 	Let $\Omega$ be a bounded open subset of $\mathbb{R}^d$. Suppose $v \in W^{1,p}_0(\Omega)$ for some $1 \leq p <n$ and $p^*= \frac{pd}{d-p}$ be the Sobolev conjugate of $p$, then there exist a positive constant $C$ (depending on $p,q,d$ and $\Omega$) such that
 	\begin{align}
 	||u||_{L_q(\Omega)} \leq C ||Du||_{L_p(\Omega)} \quad \text{for each}~ q \in [1, p^*],
 	\end{align}
 	where $Du$ denotes the first order distributional derivative of $u$.
 \end{lemma}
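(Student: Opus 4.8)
The plan is to reduce everything to the sharp Gagliardo--Nirenberg--Sobolev estimate at the endpoint exponent $q=p^*=\frac{pd}{d-p}$, and then to recover the whole range $q\in[1,p^*]$ for free. Since $\Omega$ is bounded, H\"older's inequality gives, for any $1\le q\le p^*$,
\[
\|v\|_{L_q(\Omega)}\le |\Omega|^{\frac1q-\frac1{p^*}}\,\|v\|_{L_{p^*}(\Omega)},
\]
so it suffices to prove $\|v\|_{L_{p^*}(\Omega)}\lesssim\|Dv\|_{L_p(\Omega)}$. Because $C_c^\infty(\Omega)$ is dense in $W^{1,p}_0(\Omega)$, and each such function extends by zero to an element of $C_c^\infty(\mathbb{R}^d)$ with the same norms, it is in turn enough to establish $\|v\|_{L_{p^*}(\mathbb{R}^d)}\lesssim\|Dv\|_{L_p(\mathbb{R}^d)}$ for $v\in C_c^1(\mathbb{R}^d)$; the general case then follows by passing to the limit along an approximating sequence.

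First I would treat $p=1$, where $p^*=\frac{d}{d-1}$. For $v\in C_c^1(\mathbb{R}^d)$ and each coordinate direction $i$, writing $v(x)$ as the integral of $\partial_{x_i}v$ along the line through $x$ in direction $e_i$ yields $|v(x)|\le\int_{\mathbb{R}}|Dv|\,dx_i$. Multiplying these $d$ pointwise inequalities, taking the $\frac1{d-1}$ power, and integrating successively in $x_1,\dots,x_d$ — applying at each stage the generalized H\"older inequality to the $d-1$ factors that still depend on the variable being integrated, each with exponent $d-1$ — gives
\[
\|v\|_{L_{d/(d-1)}(\mathbb{R}^d)}\le\prod_{i=1}^{d}\|\partial_{x_i}v\|_{L_1(\mathbb{R}^d)}^{1/d}\le\|Dv\|_{L_1(\mathbb{R}^d)}.
\]
For general $1\le p<d$, I would apply this to $w=|v|^{\gamma}$ with $\gamma=\frac{p(d-1)}{d-p}>1$ (which lies in $C_c^1(\mathbb{R}^d)$ since $\gamma>1$) — this is the exponent forcing $\frac{\gamma d}{d-1}=p^*$ and $(\gamma-1)\frac{p}{p-1}=p^*$. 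Then $\|w\|_{L_{d/(d-1)}}=\|v\|_{L_{p^*}}^{\gamma}$, while $|Dw|=\gamma|v|^{\gamma-1}|Dv|$ together with H\"older's inequality (exponents $\frac{p}{p-1}$ and $p$) and $(\gamma-1)\frac{p}{p-1}=p^*$ gives $\|Dw\|_{L_1}\le\gamma\,\|v\|_{L_{p^*}}^{\gamma-1}\|Dv\|_{L_p}$. Dividing through by the finite quantity $\|v\|_{L_{p^*}}^{\gamma-1}$ leaves $\|v\|_{L_{p^*}}\lesssim\|Dv\|_{L_p}$, which combined with the two reductions above completes the argument.

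The main obstacle is the $p=1$ step: one has to organize the iterated one--dimensional integrations so that, at each stage, the factor already integrated (which has ceased to depend on the remaining variables) is pulled outside the integral and the generalized H\"older inequality is applied to exactly the right group of $d-1$ remaining factors. This bookkeeping is the only genuinely delicate point; everything else is reductions via H\"older's inequality, density of $C_c^\infty$, and the explicit choice of $\gamma$. (Strictly, the statement as written should read $p<d$ in place of $p<n$, and the conclusion is for $v\in W^{1,p}_0(\Omega)$.)
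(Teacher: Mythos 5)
Your proof is correct: it is precisely the standard Gagliardo--Nirenberg--Sobolev argument (iterated one-dimensional integration with generalized H\"older for $p=1$, the substitution $|v|^{\gamma}$ with $\gamma=\frac{p(d-1)}{d-p}$ for $1<p<d$, then density of $C_c^\infty(\Omega)$ and H\"older on the bounded domain to cover all $q\in[1,p^*]$), which is exactly the proof in the cited reference; the paper itself does not prove this lemma but simply quotes it from Evans, so your argument matches the intended source, and your remark that $p<n$ should read $p<d$ is also right.
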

 
\subsection{Local Projection Operator} Here, we provide the explicit definition of the projection operator which is used in our analysis. Let $\Pi_h : L_1(\Omega) \rightarrow V_h$ be the local projection operator \cite{demlow2012pointwise} which is motivated by the classical Scott and Zhang interpolation operator \cite{scott1990finite}. For any $T\in \cT_h$, let $(\phi_p^T)_{p \in \mathcal{V}_T \cup \mathcal{M}_T}$ be the corresponding Lagrange basis. We define
\begin{align}
\Pi_hv|_T(x)=\Pi_Tv(x)= \sum_{p \in \mathcal{V}_T \cup \mathcal{M}_T} \phi_p^T(x) \int_{T} \psi_p^T(z)v(z)dz,
\end{align}

 \noindent
where $(\psi_p^T)_{p \in \mathcal{V}_T \cup \mathcal{M}_T}$ is the corresponding $L_2(T)$ dual basis of  $(\phi_p^T)_{p \in \mathcal{V}_T \cup \mathcal{M}_T}$.
\begin{remark} \label{proj}
Using the definition of dual basis \cite{demlow2012pointwise}, we have	$ \int_{T} \psi_p^T(x)\phi_r^T(x)dx=\delta_{pr}$, therefore $\Pi_hv=v~~ \forall v \in V_h$.
\end{remark} 
\noindent

Further, we can prove the following result by using Bramble Hilbert lemma. These are standard approximation and stability results hence we skip the proof ( refer the article \cite{demlow2012pointwise} for the details).
\begin{lemma} \label{approx}
Let $s \in \mathbb{N} \cup \{0\}$ such that $0 \leq s \leq 3$, then for any $T \in \cT_h$
	\begin{align}
	|\Pi_h \psi|_{W^{s,1}(T)} &\leq C |\psi|_{W^{s,1}(T^*)}, \label{DR1}\\
	||\psi -\Pi_h \psi||_{W^{r,1}(T)} &\leq C h_T^{s-r}|\psi|_{W^{s,1}(T^*)},~\text{where}~ r=0,1,2, \label{DR2}
	\end{align}
	where $\psi \in W^{1,p}(\Omega)$ and $T^*= \{T' \in \cT_h: T' \cap T \neq \varnothing \}$.
\end{lemma}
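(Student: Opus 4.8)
The plan is to reduce both estimates to a single scaled Bramble–Hilbert argument on a reference element, using the fact that $\Pi_h$ reproduces polynomials locally. First I would record the two structural properties of $\Pi_h$ that drive everything: (i) it is a \emph{local} operator in the sense that $\Pi_h v|_T = \Pi_T v$ depends only on the values of $v$ on the patch $T^*$ (indeed only on $T$ itself, since the dual functionals $\psi_p^T$ are supported on $T$), and (ii) $\Pi_h q = q$ for every $q \in V_h$, hence for every $q \in P_2(T)$ on each $T$, by Remark 3.7. Property (ii) is the polynomial-reproduction property that makes the Bramble–Hilbert lemma applicable up to $s = 3$.

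Next I would establish the $L^1$-stability bound on the local functionals. For $p \in \mathcal{V}_T \cup \mathcal{M}_T$ write $\lambda_p^T(v) = \int_T \psi_p^T(z) v(z)\, dz$. Pulling back to the reference simplex $\hat T$ via the affine map $F_T$, one has $\psi_p^T = |T|^{-1} \hat\psi_p \circ F_T^{-1}$ up to the usual Jacobian scaling, and since $\hat\psi_p \in L^\infty(\hat T)$ with a norm depending only on $\hat T$, we get $|\lambda_p^T(v)| \lesssim |T|^{-1}\|v\|_{L^1(T)}\cdot(\text{something})$; combined with $\|\phi_p^T\|_{L^\infty(T)} \lesssim 1$ and, for derivatives, $|\phi_p^T|_{W^{s,1}(T)} \lesssim |T|\, h_T^{-s}$, this yields the crude bound $|\Pi_h v|_{W^{s,1}(T)} \lesssim h_T^{-s} \|v\|_{L^1(T)}$. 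That is not yet \eqref{DR1}; to upgrade $\|v\|_{L^1(T)}$ on the right to $|v|_{W^{s,1}(T^*)}$ one subtracts a polynomial: for any $q \in P_{s-1}(T)$, $\Pi_h v = \Pi_h(v - q) + q$, so $|\Pi_h v|_{W^{s,1}(T)} = |\Pi_h(v-q)|_{W^{s,1}(T)} \lesssim h_T^{-s}\|v - q\|_{L^1(T)}$; choosing $q$ to be the averaged Taylor polynomial of $v$ on $T^*$ and invoking the Bramble–Hilbert / Deny–Lions lemma gives $\inf_{q}\|v-q\|_{L^1(T)} \lesssim h_T^{s}|v|_{W^{s,1}(T^*)}$, which is exactly \eqref{DR1}. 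For \eqref{DR2}, write $\psi - \Pi_h\psi = (\psi - q) - \Pi_h(\psi - q)$ for the same polynomial $q \in P_{s-1}$ (legitimate because $\Pi_h q = q$), apply \eqref{DR1} to the second term together with the trivial bound $|\psi - q|_{W^{r,1}(T)} \le |\psi|_{W^{r,1}(T^*)}$ handled again by Bramble–Hilbert, and collect powers of $h_T$ to land on $h_T^{s-r}|\psi|_{W^{s,1}(T^*)}$.

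The main obstacle — really the only place where care is needed — is tracking the $h_T$ and $|T|$ powers through the affine pullback of the \emph{dual} basis $\psi_p^T$, since unlike the Lagrange basis $\phi_p^T$ the functions $\psi_p^T$ carry an extra $|T|^{-1}$ from the $L^2(T)$-duality normalization $\int_T \psi_p^T \phi_r^T = \delta_{pr}$; one must verify that these factors cancel exactly against the $|T|$ produced when integrating $|\phi_p^T|_{W^{s,1}(T)}$ over $T$, so that the final constant is shape-regularity-dependent but $h$-independent. Everything else is the standard Scott–Zhang-type template, and since the paper explicitly cites \cite{demlow2012pointwise} for the details I would state the lemma, indicate the reduction to Bramble–Hilbert via polynomial reproduction, and refer the reader there rather than reproduce the scaling computations in full.
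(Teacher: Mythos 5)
Your proposal is correct and matches the route the paper itself intends: the paper omits the proof entirely, remarking only that the estimates follow from the Bramble--Hilbert lemma and citing Demlow (2012), and your argument (local $L^1$-stability of the dual functionals via affine scaling, polynomial reproduction $\Pi_h q=q$ for $q\in P_2$, then Bramble--Hilbert with an averaged Taylor polynomial, noting $P_{s-1}\subseteq P_2$ since $s\le 3$) is exactly that standard argument spelled out. The scaling bookkeeping you flag ($\|\psi_p^T\|_{L^\infty(T)}\lesssim |T|^{-1}$ cancelling $|\phi_p^T|_{W^{s,1}(T)}\lesssim |T|h_T^{-s}$) is indeed the only delicate point, and you handle it correctly.
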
 

\subsection{Averaging Operator} We relate the discrete space and continuous space through an averaging (enriching) map. It is well known that an enriching map $E_h:V_h \to V_c$ plays an crucial role in  a posteriori error analysis of DG finite element methods. We define $E_h$ by standard averaging technique \cite{brenner1999convergence}. Let $v_h \in V_h$, define $E_hv_h \in V_c$ as follows:\\
For an interior node $p \in \mathcal{V}_h^i \cup \mathcal{M}_h^i$, we define\\
\begin{align}
E_hv_h(p)=\frac{1}{|\cT_p|} \sum_{T \in \cT_p} v_h|_T(p),
\end{align}
where $|\cT_p|$ denotes the cardinality of $\cT_p,$ and for a boundary node  $p \in \mathcal{V}_h^b \cup \mathcal{M}_h^b$ we set $E_hv_h(p)=0.$ From the standard scaling arguments \cite{brenner2007mathematical} and inverse inequalities, we prove the following approximation properties.

\begin{lemma} \label{lem:average}
	Let $v \in V_h$, then 
	\begin{align}
	||E_hv-v||_{L_\infty(\Omega_h)} & \lesssim ||\sjump{v}||_{L_\infty(\mathcal{E}_h)}, \label{pp}\\
	\underset{T\in \mathcal{T}_h}{max}||h_T\nabla(E_hv-v)||_{L_\infty(T)} & \lesssim  ||\sjump{v}||_{L_\infty(\mathcal{E}_h)}. \label{ppp}
	\end{align}
	\begin{proof}
		Let $T \in \cT_h$ be arbitrary and using scaling arguments,
		\begin{align}
		||E_hv-v||_{L_{\infty}(T)} &=  || \sum_{p \in \mathcal{V}_T}(E_hv-v)(p)\phi^T_p +  \sum_{p \in \mathcal{\mathcal{M}}_T}(E_hv-v)(p)\phi^T_p||, \\ & \lesssim \sum_{p \in \mathcal{V}_T \cup \mathcal{M}_T} |(E_hv-v)_T(p)|,
		\end{align}
		where, we have used $||\phi_p||_{L_{\infty}(T)} \lesssim 1 ~~ \forall T \in \cT_h, \forall p \in \mathcal{V}_T \cup \mathcal{M}_T. $ 

		Let $p \in \mathcal{V}_T \cap \mathcal{V}_h^i$ or $p \in \mathcal{M}_T \cap \mathcal{M}_h^i$, then using the definition of $E_h$ we have
		\begin{align*}
		|(E_hv-v)_T(p)| &= \frac{1}{|\cT_p|} \left|\sum_{T^* \in \cT_p} v|_{T^*}(p)-v|_T(p)\right|,\\
		&=\frac{1}{|\cT_p|} \left|\sum_{T^* \in \cT_p}\Big(v|_{T^*}(p)-v|_T(p)\Big)\right|.
		\end{align*}
		Note that $T^*$ can be connected to T by a chain of elements in $\cT_p,$ where we can find a common edge/face between any two consecutive elements i.e., there exist $T_1, T_2,\cdots,T_n \in \cT_p~~ (n \leq |\cT_p|)$ such that $T_1=T^*,T_n=T$, $T_i~ \mbox{and}~ T_{i+1}$ share a common edge/face $e_i$ $\forall 1 \leq i \leq n-1$. Therefore, we can write
		\begin{align*}
		v|_{T^*}(p)-v|_T(p)&=v|_{T_1}(p)-v|_{T_2}(p)+v|_{T_2}(p)-v|_{T_3}(p)\\&\qquad+ v|_{T_3}(p)-\cdots+v|_{T_{n-2}}(p)-v|_{T_{n-1}}(p)+v|_{T_{n-1}}(p)-v|_{T_n}(p),
		\end{align*}
		then we have
		\begin{align*}
		|v|_{T^*}(p)-v|_T(p)| \lesssim \sum_{e \in \cE_p} ||\sjump{v}||_{L_{\infty}(e)},
		\end{align*}
		which leads to \eqref{pp} and we can prove \eqref{pp} for the boundary node as well on the similar lines. The estimate \eqref{ppp} can be realized with a use of Lemma \ref{inverse}.
	\end{proof}
\end{lemma}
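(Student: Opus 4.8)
The plan is to prove both estimates element by element, reducing everything to point evaluations: on each $T\in\cT_h$ the functions $E_hv$ and $v$ restrict to quadratic polynomials, so their difference on $T$ is completely determined by its values at the six Lagrange nodes in $\mathcal{V}_T\cup\mathcal{M}_T$. Writing $(E_hv-v)|_T=\sum_{p\in\mathcal{V}_T\cup\mathcal{M}_T}(E_hv-v)(p)\,\phi_p^T$ and using the shape-regularity of $\cT_h$ together with a scaling argument to get $\|\phi_p^T\|_{L_\infty(T)}\lesssim 1$, one obtains $\|E_hv-v\|_{L_\infty(T)}\lesssim\sum_{p\in\mathcal{V}_T\cup\mathcal{M}_T}|(E_hv-v)(p)|$. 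It then suffices to bound each nodal difference by $\|\sjump{v}\|_{L_\infty(\mathcal{E}_h)}$.

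For an interior node $p$, the definition of $E_h$ gives $(E_hv-v)|_T(p)=|\cT_p|^{-1}\sum_{T'\in\cT_p}\bigl(v|_{T'}(p)-v|_T(p)\bigr)$. I connect $T'$ to $T$ by a chain $T_1=T',\dots,T_n=T$ of elements of $\cT_p$ in which consecutive members share an edge/face $e_i\ni p$; shape-regularity bounds both $n$ and $|\cT_p|$ by an absolute constant. Telescoping along the chain, $v|_{T'}(p)-v|_T(p)=\sum_{i=1}^{n-1}\bigl(v|_{T_i}(p)-v|_{T_{i+1}}(p)\bigr)$, and since $\bigl|v|_{T_i}(p)-v|_{T_{i+1}}(p)\bigr|=|\sjump{v}(p)|$ on $e_i$ (the jump vector being a unit normal times the scalar difference of traces), each summand is bounded by $\|\sjump{v}\|_{L_\infty(e_i)}$. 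Hence $|(E_hv-v)(p)|\lesssim\|\sjump{v}\|_{L_\infty(\mathcal{E}_h)}$.

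For a boundary node $p$ we have $E_hv(p)=0$, so the nodal difference is $-v|_T(p)$; I again chain from $T$, through interior edges/faces only, to an element $T'\in\cT_p$ that carries a boundary edge/face $e'\ni p$ (such an element exists because $p\in\partial\Omega$), telescope the interior jumps exactly as above, and bound the last term by $\bigl|v|_{T'}(p)\bigr|\le\|v\|_{L_\infty(e')}=\|\sjump{v}\|_{L_\infty(e')}$, using $\sjump{v}=v\,n^{e'}$ on a boundary edge/face. Collecting the interior and boundary cases over all nodes of all elements gives \eqref{pp} (here $\Omega_h=\Omega$, since $\cT_h$ triangulates the polygonal domain exactly). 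The estimate \eqref{ppp} is then immediate: since $(E_hv-v)|_T\in P_2(T)$, the inverse inequality of Lemma \ref{inverse} yields $\|\nabla(E_hv-v)\|_{L_\infty(T)}\lesssim h_T^{-1}\|E_hv-v\|_{L_\infty(T)}$, and multiplying through by $h_T$ and inserting the already-proved (localized) bound \eqref{pp} finishes the proof.

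I expect the boundary case to be the only delicate point: one must verify that every element of $\cT_p$ touching $\partial\Omega$ can be telescoped, using interior jumps alone, to an element actually carrying a boundary edge/face containing $p$, so that the homogeneous boundary value of $E_hv$ is comparable to $v$ purely through jump terms; this is where the uniform bound on the patch $\cT_p$ and on the chain length (both consequences of shape-regularity) is used essentially. The remaining ingredients — the scale-invariant bound $\|\phi_p^T\|_{L_\infty(T)}\lesssim 1$, the telescoping identity, and the inverse estimate — are entirely standard.
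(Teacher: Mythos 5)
Your proposal is correct and follows essentially the same route as the paper's own proof: reduce to nodal values via the Lagrange basis and a scaling bound, telescope across a chain of elements in $\cT_p$ to convert nodal differences into jumps, and then obtain \eqref{ppp} from \eqref{pp} by the inverse inequality of Lemma \ref{inverse}. The only difference is that you spell out the boundary-node case (chaining to an element with a boundary edge/face containing $p$ and using $\sjump{v}=v\,n^{e}$ there), which the paper dispatches with ``on the similar lines.''
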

\section{Discrete Problem} \label{sec4} 
\noindent
In this paper, we consider two different ways to define the discrete version of the set $\cK$. First, we use integral constraints which is motivated by the article \cite{gudi2015reliable} and secondly, we used the nodal constraints at some quadrature points in the discrete convex set which will be defined later in this section. Let
{
\begin{equation*}
V_0=\{v\in L_2({\Omega}) : v|_T\in P_0(T)~ \forall\, T\in
{\mathcal T}_h\},
\end{equation*}}

and for any {$v \in L_2(T)$, $T\in {\mathcal T}_h$}, define
\begin{equation} \label{formula}
    Q_T(v)= \frac{1}{|T|}\int_T v(x)~dx.
\end{equation}

Then, {$Q_h: L_2(\Omega) \rightarrow V_0$ is defined as~ $Q_h(v)|_T=Q_T(v)$ for all $v \in L_2(\Omega).$} The approximation properties for the operator $Q_h$ are stated in the next lemma.
\begin{lemma} \label{approximation}
	Let $T \in \cT_h$ and $v \in W^{s,p}(T)$ where $0 \leq s \leq 1$, then it holds that
	\begin{align}
	||v-Q_h(v)||_{L_p(T)} \lesssim  h^s_T |v|_{W^{s,p}(T)}. \label{4.2}
	\end{align}
\end{lemma}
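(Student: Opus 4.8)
The plan is to reduce everything to a fixed reference simplex $\hat T$ through the affine equivalence $F_T:\hat T\to T$ of the shape-regular family $\cT_h$, and then to combine the Bramble--Hilbert lemma with a real interpolation argument. First note that, by construction, $Q_h(v)|_T=Q_T(v)$ is nothing but the $L_2(T)$-orthogonal projection of $v$ onto $P_0(T)$; hence the operator $I-Q_T$ annihilates $P_0(T)$, and by Jensen's (equivalently Hölder's) inequality $\|Q_T v\|_{L_p(T)}\le\|v\|_{L_p(T)}$ for every $1\le p\le\infty$. Consequently the case $s=0$ is immediate, with $\|v-Q_T v\|_{L_p(T)}\le 2\|v\|_{L_p(T)}=2\,|v|_{W^{0,p}(T)}$.

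For $s=1$ I would pass to $\hat T$: writing $v=\hat v\circ F_T^{-1}$ one has $Q_{\hat T}\hat v=\widehat{Q_T v}$, and since the linear map $\hat v\mapsto\hat v-Q_{\hat T}\hat v$ is bounded from $W^{1,p}(\hat T)$ into $L_p(\hat T)$ and vanishes on constants, the Bramble--Hilbert (Deny--Lions) lemma on the fixed domain $\hat T$ gives $\|\hat v-Q_{\hat T}\hat v\|_{L_p(\hat T)}\lesssim|\hat v|_{W^{1,p}(\hat T)}$. The standard scaling estimates for affinely equivalent elements of a shape-regular family, namely $\|w\|_{L_p(T)}\simeq h_T^{d/p}\|\hat w\|_{L_p(\hat T)}$ and $|v|_{W^{1,p}(T)}\simeq h_T^{d/p-1}|\hat v|_{W^{1,p}(\hat T)}$, then yield $\|v-Q_T v\|_{L_p(T)}\lesssim h_T\,|v|_{W^{1,p}(T)}$, which is \eqref{4.2} for $s=1$.

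For the intermediate range $0<s<1$ I would interpolate on the reference element. The two endpoint bounds $\|I-Q_{\hat T}\|_{L_p(\hat T)\to L_p(\hat T)}\lesssim 1$ and $\|I-Q_{\hat T}\|_{W^{1,p}(\hat T)\to L_p(\hat T)}\lesssim 1$, together with the identification $\big(L_p(\hat T),W^{1,p}(\hat T)\big)_{s,p}=W^{s,p}(\hat T)$ with equivalent norms on the fixed domain $\hat T$, give by the interpolation theorem $\|\hat v-Q_{\hat T}\hat v\|_{L_p(\hat T)}\lesssim\|\hat v\|_{W^{s,p}(\hat T)}$; since $I-Q_{\hat T}$ kills constants, the full norm on the right may be replaced by the seminorm $|\hat v|_{W^{s,p}(\hat T)}$ (using that this seminorm is a norm on $W^{s,p}(\hat T)/P_0$). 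Finally I scale back, using $\|v-Q_T v\|_{L_p(T)}\simeq h_T^{d/p}\|\hat v-Q_{\hat T}\hat v\|_{L_p(\hat T)}$ and the scaling of the Gagliardo seminorm $|v|_{W^{s,p}(T)}\simeq h_T^{d/p-s}|\hat v|_{W^{s,p}(\hat T)}$; the exponents combine to produce exactly the factor $h_T^{s}$ claimed in \eqref{4.2}.

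The scaling identities are routine; the step that needs care is the interpolation argument, namely ensuring that only the $W^{s,p}$-seminorm survives on the right-hand side and that every interpolation and Bramble--Hilbert constant is taken on the fixed domain $\hat T$, so that all constants are independent of $T$ and of $h$. An alternative that avoids quoting interpolation-space identities is a direct fractional Poincaré--Wirtinger inequality on $\hat T$: since $W^{s,p}(\hat T)$ embeds compactly in $L_p(\hat T)$ and $|\cdot|_{W^{s,p}(\hat T)}$ is a norm on the quotient $W^{s,p}(\hat T)/P_0$, a standard compactness/contradiction argument yields $\|\hat v-Q_{\hat T}\hat v\|_{L_p(\hat T)}\lesssim|\hat v|_{W^{s,p}(\hat T)}$ directly, after which one scales back exactly as above.
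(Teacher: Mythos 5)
Your proof is correct. The paper states this lemma without proof, treating it as a standard approximation property of the elementwise $L_2$-projection onto constants, and your argument (Hölder stability of $Q_T$ for $s=0$, affine scaling to the reference simplex plus Bramble--Hilbert/Deny--Lions for $s=1$, and real interpolation or a fractional Poincar\'e inequality on $\hat T$ for $0<s<1$, with all constants fixed on $\hat T$ and shape regularity controlling the scaling of the Gagliardo seminorm) is exactly the canonical way to establish it; note that the paper only ever invokes the lemma with integer $s$, so the fractional case you handle is more than is strictly needed.
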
 
\noindent
The first discrete version of $\cK$ is denoted by $\cK_h^{(1)}$ which is defined by
\begin{align} \label{dset1}
\cK_h^{(1)}= \{v_h \in V_h: Q_h(v_h) \geq Q_h(\chi)\},
\end{align}

\noindent
\textbf{Discrete Problem 1.}
The discontinuous Galerkin approximation $u_h^{(1)} \in \cK_h^{(1)}$ is the solution of
\begin{equation} \label{DDD}
    \mathcal{A}_h(u_h^{(1)},v_h-u_h^{(1)}) \geq (f,v_h-u_h^{(1)})~~ \forall v_h\in \cK_h^{(1)},
\end{equation}
where $\mathcal{A}_h(\cdot,\cdot)$ is the DG bilinear form and further it can be written as
\begin{equation} \label{DP12}
    \mathcal{A}_h(v_h,w_h)=a_h(v_h,w_h) + b_h(v_h,w_h) ~\forall v_h,w_h \in V_h,
\end{equation}
with 
\begin{equation*}
    a_h(v_h,w_h)= (\nabla_h v_h, \nabla_h w_h) = \sum_{T \in \cT_h}\int_T \nabla v_h \cdot \nabla w_h~ dx,
    \end{equation*}
and the bilinear form $b_h(\cdot,\cdot)$ is defined by
\begin{align}
b_h(w,v) = - \sum_{e \in \cE_h} \int_e \{\{\nabla w\}\} [[v]]~ ds  & - \theta \sum_{e \in \cE_h} \int_e \{\{\nabla v\}\} [[w]]~ ds  \\ & + \sum_{e \in \cE_h} \int_e \frac{\eta}{h_e} [[w]] [[v]]~ ds, \quad \forall w , v \in V_h.
\end{align}
\begin{remark} \label{DGform}
	Different value of $\theta$ will give rise to different DG methods \cite{wang2011discontinuous}. In our paper, we deal with SIPG, IIPG and NIPG methods which corresponds to $\theta \in \{1,0,-1\}$, though the analysis holds for other DG methods as well  listed in \cite{gudi2014posteriori}.
 \end{remark}
 Next result is concerning the operator $\pi_h $ and it's properties.
\begin{lemma} \label{4.3}
Let $\pi_h : V_h \rightarrow V_0$ be a map defined by $\pi_h(v_h)=Q_h(v_h)$. Then $\pi_h$ is onto and hence an inverse map $\pi_h^{-1}: V_0 \rightarrow V_h$ can be defined into a subset of $V_h$ as $\pi_h^{-1}(w_h)=v_h$ where $v_h\in V_h$ with $Q_h(v_h)=w_h$ for $w_h\in V_0.$
\end{lemma}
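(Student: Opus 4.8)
The plan is to show that $\pi_h : V_h \to V_0$, defined by $\pi_h(v_h) = Q_h(v_h)$, is surjective, since everything else in the statement (the existence of a right inverse $\pi_h^{-1}$ defined on all of $V_0$ with values in $V_h$, picking for each $w_h \in V_0$ some preimage $v_h$) follows immediately once surjectivity is established. Surjectivity is a local, element-by-element statement: both $V_h$ and $V_0$ are direct sums over $T \in \cT_h$ of local spaces $P_2(T)$ and $P_0(T)$ respectively, and $Q_h$ acts on each piece independently via $Q_T$. So it suffices to prove that $Q_T : P_2(T) \to P_0(T) \cong \R$ is onto for each fixed $T$.

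First I would fix $T \in \cT_h$ and take an arbitrary $w_h \in V_0$, so that $w_h|_T = c$ is a constant. I must exhibit $v_h \in P_2(T)$ with $Q_T(v_h) = \frac{1}{|T|}\int_T v_h \, dx = c$. The simplest choice is the constant function $v_h \equiv c$ on $T$: since constants lie in $P_2(T)$ and $\frac{1}{|T|}\int_T c \, dx = c$, this works. Doing this on every $T \in \cT_h$ and gluing the pieces (no continuity is required across edges/faces since $V_h$ is a discontinuous space) produces $v_h \in V_h$ with $Q_h(v_h) = w_h$. Hence $\pi_h$ is onto.

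Given surjectivity, for each $w_h \in V_0$ the preimage set $\pi_h^{-1}(\{w_h\}) = \{v_h \in V_h : Q_h(v_h) = w_h\}$ is nonempty, and one simply selects one element of it (for instance the constant-per-element representative constructed above, or any fixed selection) to define the map $\pi_h^{-1} : V_0 \to V_h$; its range is the subset of $V_h$ consisting of these chosen representatives. This is exactly the content of the lemma. I would add the remark that $\pi_h$ is not injective (the kernel on each $T$ is the codimension-one subspace of $P_2(T)$ of functions with zero mean, so $\pi_h^{-1}$ is a genuine choice of right inverse, not a two-sided inverse), which justifies the careful phrasing ``into a subset of $V_h$'' in the statement.

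There is essentially no obstacle here: the only thing to be slightly careful about is that "inverse map" in the statement means a right inverse (a section of $\pi_h$), not a bijective inverse, and that the construction is well-defined because $V_h$ being discontinuous lets us make the choice independently on each element without worrying about inter-element compatibility. If one wants the selection to be canonical rather than arbitrary, the constant-per-element lift $w_h \mapsto$ (the function equal to $w_h|_T$ on each $T$) is the natural one and is itself linear, which may be worth recording for later use.
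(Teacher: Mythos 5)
Your proof is correct: surjectivity reduces to the element-level fact that $Q_T:P_2(T)\to\R$ hits every constant, which the constant-per-element lift shows immediately, and the discontinuity of $V_h$ lets you glue the local choices freely; your remark that $\pi_h^{-1}$ is only a right inverse (a section) matches how the lemma is used, since the paper handles the non-uniqueness of the preimage separately when showing $\sigma_h^{(1)}$ is well defined in Theorem \ref{signn}. The paper itself does not spell out a proof of Lemma \ref{4.3} (it defers to the reference \cite{gaddam2018bubbles}), so your elementary, self-contained argument is exactly the standard one that reference employs and fills the gap appropriately.
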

\noindent
We refer the article \cite{gaddam2018bubbles} for the proof of the lemma. Next, with the help of Lemma \ref{4.3}, we define the first discrete Lagrange multiplier $\sigma_h^{(1)} \in V_0$ (corresponding to $\cK_h^{(1)}$) as
\begin{align} \label{DLM}
(\sigma_h^{(1)}, w_h)= (f, \pi_h^{-1} w_h)-\mathcal{A}_h(u_h,\pi_h^{-1} w_h) \quad \forall w_h \in V_0.
\end{align} In the next lemma, we derive key properties for $\sigma_h^{(1)}$.
\begin{theorem} \label{signn}
	$\sigma_h^{(1)}$ is well-defined and $\sigma_h^{(1)} \leq 0$ on $\Bar{\Omega}.$
	\begin{proof}
		Let $y_h \in V_h$ be such that $ Q_h(y_h) \geq 0$ then by putting $v_h= u_h^{(1)} + y_h$ in \eqref{DDD} we observe
		\begin{equation} \label{DP11}
		\mathcal{A}_h(u_h^{(1)},y_h) \geq (f,y_h)~for ~y_h \in V_h~ with~ Q_h(y_h) \geq 0.
		\end{equation} 
		Further, let $v_h \in V_h$ with $Q_h(v_h)=0$, then substitute $y_h=v_h$ and $y_h=-v_h$ in \eqref{DP11} to obtain
		\begin{equation} \label{DP111}
		\mathcal{A}_h(u_h^{(1)},v_h) = (f,v_h).
		\end{equation}

	Now suppose	there exist two distinct elements $z_1$ and $z_2$ $\in V_h$ such that $\pi_h(z_1)=\pi_h(z_2)=w_h$. Then $Q_h(z_1-z_2)=0$, we have $\mathcal{A}_h(u_h^{(1)},z_1-z_2) = (f,z_1-z_2)$ by \eqref{DP111} which implies $\sigma_h^{(1)}$ is well defined. Moreover, let $w_h \geq 0$ in \eqref{DLM} and using \eqref{DP11} we have that $\sigma_h^{(1)} \leq 0$ on $\bar{\Omega}$.
	\end{proof}
\end{theorem}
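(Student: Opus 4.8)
The plan is to verify the two assertions of the theorem in turn. For well-definedness, observe that the formula \eqref{DLM} a priori depends on the choice of a preimage of $w_h$ under $\pi_h$, and since $\pi_h = Q_h|_{V_h}$ is onto but not injective (Lemma \ref{4.3}), one must check that the right-hand side of \eqref{DLM} is the same for every such preimage. For the sign, one uses that $\sigma_h^{(1)}$ is a piecewise constant function, so it is enough to test against nonnegative elements of $V_0$.

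First I would extract a Galerkin-type relation from the discrete variational inequality \eqref{DDD}. For any $y_h \in V_h$ with $Q_h(y_h) \geq 0$, the function $v_h = u_h^{(1)} + y_h$ is admissible in \eqref{DDD}: by linearity of $Q_h$ and $u_h^{(1)} \in \cK_h^{(1)}$ we have $Q_h(v_h) = Q_h(u_h^{(1)}) + Q_h(y_h) \geq Q_h(\chi)$, so $v_h \in \cK_h^{(1)}$. Substituting it gives $\mathcal{A}_h(u_h^{(1)}, y_h) \geq (f, y_h)$ for all such $y_h$. Applying this with $y_h$ and with $-y_h$ whenever $Q_h(y_h) = 0$ upgrades the inequality to the equality $\mathcal{A}_h(u_h^{(1)}, y_h) = (f, y_h)$ on $\ker Q_h \cap V_h$. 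Now if $z_1, z_2 \in V_h$ both satisfy $\pi_h(z_1) = \pi_h(z_2) = w_h$, then $z_1 - z_2 \in \ker Q_h$, so $(f, z_1) - \mathcal{A}_h(u_h^{(1)}, z_1) = (f, z_2) - \mathcal{A}_h(u_h^{(1)}, z_2)$; this is precisely the independence that makes \eqref{DLM} meaningful.

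For the sign, fix $w_h \in V_0$ with $w_h \geq 0$ and take $v_h = \pi_h^{-1} w_h \in V_h$, so that $Q_h(v_h) = w_h \geq 0$. The relation established above then yields $\mathcal{A}_h(u_h^{(1)}, v_h) \geq (f, v_h)$, hence $(\sigma_h^{(1)}, w_h) = (f, v_h) - \mathcal{A}_h(u_h^{(1)}, v_h) \leq 0$. Since $\sigma_h^{(1)} \in V_0$, choosing $w_h$ to be the characteristic function of an arbitrary $T \in \cT_h$ gives $|T|\,\sigma_h^{(1)}|_T \leq 0$, so $\sigma_h^{(1)} \leq 0$ on every element and therefore on $\bar{\Omega}$. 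The whole argument is a careful unfolding of the definitions; the only step that demands a moment's attention is the admissibility of the test functions $u_h^{(1)} + y_h$ in \eqref{DDD}, which rests on the linearity of the cell-average operator $Q_h$ together with $u_h^{(1)} \in \cK_h^{(1)}$, so I anticipate no genuine obstacle here.
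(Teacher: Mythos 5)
Your proposal is correct and follows the paper's argument essentially verbatim: test \eqref{DDD} with $v_h=u_h^{(1)}+y_h$ to get the inequality for $Q_h(y_h)\geq 0$, upgrade to equality on $\ker Q_h$ by testing with $\pm y_h$, deduce independence of the preimage for well-definedness, and use nonnegative $w_h$ for the sign. The extra details you supply (admissibility via linearity of $Q_h$, and localizing with characteristic functions of elements) merely flesh out steps the paper leaves implicit.
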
 \noindent
We will define  the following discrete contact set and discrete non-contact set for $u_h^{(1)}.$
\begin{align*}
\mathcal{C}_h^{(1)}= \{T \in \cT_h: Q_h(u_h^{(1)}) = Q_h(\chi)\},\\
\mathcal{N}_h^{(1)}= \{T \in \cT_h: Q_h(u_h^{(1)}) > Q_h(\chi)\}.
\end{align*}
We also observe that $\sigma_h^{(1)} = 0$ over $\mathcal{N}_h^{(1)}$ and it is clear (from the definition of $\sigma_h^{(1)}$) that for any $v_h \in V_h$
\begin{align} \label{sigma1}
(\sigma_h^{(1)},v_h)&=(\sigma_h^{(1)},Q_h(v_h)) \nonumber,\\
&=(f,v_h)-\mathcal{A}_h(u_h^{(1)},v_h).
\end{align}
In the next subsection, we collect some tools to define the second discrete version of $\cK$ and the corresponding discrete Lagrange multiplier $\sigma_h^{(2)}$. We start with some facts about numerical integration in $\mathbb{R}^2$ and $\mathbb{R}^3$.
\subsection{Numerical Integration} \label{integration}
\begin{itemize}
	\item $T$ be a $d$-simplex formed by the vertices $\{x_1,x_2,\cdots,x_{d+1}\}.$
	\item $G=\frac{1}{d+1} \sum_{i=1}^{d+1} x_i$ (The Centroid of $T$).
	\item  Choose $r_i= x_i~ \forall ~i \in \{1,2,\cdots,d+1\}$ for $s=1.$
	\item  Choose $r_i=bx_i + (1-b)G$ with $b=\frac{1}{\sqrt{d+2}} \forall ~i \in \{1,2,3,\cdots,d+1\}$ for $s=2.$
\end{itemize}
 
Then, the formula 
\begin{align} \label{exact}
\int_{T} p~ dx = \frac{|T|}{d+1} \sum_{i=1}^{d+1} p(r_i) \quad 
\forall p \in \mathcal{P}_s(T)
\end{align}
is exact for $d =\{2,3\}$ and $ s=\{1,2\}$.\\
\begin{definition}
	\begin{enumerate}
		\item  $\textbf{Quadrature points.}$ The Quadrature points exact for $\mathcal{P}_s(T)$ are given by $\{r_1,r_2,...,r_{d+1}\}$ (as defined earlier) where $T$ is a $d$-simplex in $\cT_h$ having vertices as $\{x_1,x_2,...,x_{d+1}\}.$
		\item $\textbf{Quadrature d-simplex.}$ Let $T \in \cT_h$ be a $d$-simplex, the Quadrature points exact for  $\mathcal{P}_s(T)$ also form a $d$-simplex which is named as Quadrature $d$-simplex and is denoted by $\mathcal{Q}_T$.
		\item $\cV_{\mathcal{Q}_T}$ denotes the set of all vertices of Quadrature $d$-simplex $\mathcal{Q}_T$ for any $T \in \cT_h.$
		\item $\cM_{\mathcal{Q}_T}$ denotes the set of all mid-points on edges/faces of Quadrature $d$-simplex $\mathcal{Q}_T$ for any $T \in \cT_h.$
		\item  $\cV_{h,\mathcal{Q}}$= $\cup_{T \in \cT_h}$ $\cV_{\mathcal{Q}_T}.$
		\item  $\cM_{h,\mathcal{Q}}$= $\cup_{T \in \cT_h}$ $\cM_{\mathcal{Q}_T}.$
	\end{enumerate}
\end{definition}
\noindent
Next, the second discrete version of $\cK$, denoted by $\cK_h^{(2)}$  is defined by
\begin{align*}
\cK_h^{(2)}=\{ v_h \in V_h: v_h(r) \geq \chi(r)~~\forall r \in   \cV_{\mathcal{Q}_T}, ~~ \forall T~ \in \cT_h \}.
\end{align*}

\textbf{Discrete Problem 2.} The discontinuous Galerkin approximation $u_h^{(2)} \in \cK_h^{(2)}$ is the solution of
\begin{equation} \label{DDDD}
\mathcal{A}_h(u_h^{(2)},v_h-u_h^{(2)}) \geq (f,v_h-u_h^{(2)})~~ \forall v_h\in \cK_h^{(2)},
\end{equation}

Next, we state the definitions of discrete contact set $\mathcal{C}_h^{(2)}$, discrete non-contact set $\mathcal{N}_h^{(2)}$ and free boundary set $\mathcal{F}_h^{(2)}$ corresponding to $u_h^{(2)}:$
\begin{align*}
\mathcal{C}_h^{(2)}= \{T \in \cT_h: u_h^{(2)}(z)=\chi(z)~\forall z  \in \cV_{\mathcal{Q}_T} \},
\end{align*}
\begin{align*}
\mathcal{N}_h^{(2)}= \{T \in \cT_h: u_h^{(2)}(z) > \chi(z)~\forall z  \in \cV_{\mathcal{Q}_T} \},
\end{align*}
and
\begin{align*}
\mathcal{F}_h^{(2)}= \{T \in \cT_h: ~\mbox{there exist} ~~z_1,z_2 \in \cV_{\mathcal{Q}_T} ~\mbox{such that}~ u_h^{(2)}(z_1) = \chi(z_1) ~\mbox{and} ~u_h^{(2)}(z_2) > \chi(z_2)\}.
\end{align*}
For the convenience, let $\{\psi_z : z \in  \cV_{h,\mathcal{Q}} \cup \cM_{h,\mathcal{Q}} \}$ be the canonical Lagrange basis of $V_h$, i.e., \\
\[
\psi_{z}(r) := \begin{cases}
1 & \text{if}~~ z = r\\
0 & \text otherwise.
\end{cases}
\]
We will need some more discrete spaces for the subsequent discussion. The space $V_h$ is decomposed as $V_h = W_h \bigoplus W_h^{c}$, where 
\begin{align*}
W_h=span \{\psi_z \in V_h: z \in \cV_{h,\mathcal{Q}} \},
\end{align*}
and the subspace $W_h^{c}$ of $V_h$ is defined as
\begin{align*}
W_h^{c}=span \{\psi_z \in V_h: z \in \cM_{h,\mathcal{Q}}  \}
\end{align*}
is the orthogonal complement of $W_h$ in $V_h$ with respect to the inner product
\begin{align*}
\langle w_h, v_h \rangle_{V_h} = \sum_{T \in \cT_h} \frac{|T|}{d+1} \left( \sum_{z \in \cV_{\mathcal{Q}_T}} w_h(z)v_h(z) + \sum_{z \in \cM_{\mathcal{Q}_T}} w_h(z)v_h(z) \right).
\end{align*}
Let
\begin{align*}
V_1= \{ v_h \in L_2(\Omega): v_h|_T \in \mathcal{P}_1(T)~ \forall ~T \in \cT_h\},
\end{align*}
and let $\{\phi_z : z \in \cV_{h,\mathcal{Q}} \}$ be the Lagrange canonical basis of $V_1$, i.e., for $q \in \cV_{h,\mathcal{Q}}$\\
\[
\phi_{z}(r) := \begin{cases}
1 & \text{if}~~ z = r,\\
0 & \text otherwise.
\end{cases}
\]
 The discrete Lagrange multiplier \textbf{$\sigma_h^{(2)} \in V_1$} is then defined by
\begin{align} \label{DLLM}
\langle \sigma_h^{(2)}, v_h \rangle_h := (f, \tilde{\Pi}_h v_h) - \mathcal{A}_h(u_h^{(2)}, \tilde{\Pi}_h v_h)~ \forall~ v_h \in V_1,
\end{align}
where 
\begin{align} \label{inner}
\langle w,v \rangle_h = \sum_{T \in \cT_h} \frac{|T|}{d+1} \sum_{z \in \cV_{\mathcal{Q}_T}} w(z)v(z)
\end{align}
and $\tilde{\Pi}_h : V_1 \rightarrow W_h$ is defined by 
\begin{align}
\tilde{\Pi}_h v = \sum_{z \in \cV_{h,\mathcal{Q}}} v(z)\psi_z,  \quad v \in V_1.
\end{align}
We list out some key observations related to $\tilde{\Pi}_h$ which is useful in later analysis. They can be verified from the definition of $\tilde{\Pi}_h$ and the Bramble Hilbert lemma \cite{ciarlet2002finite}.
\begin{itemize}
	\item  $\tilde{\Pi}_hv(z)=v(z)$ where $z \in  \cV_{h,\mathcal{Q}}$ and $v \in V_1.$
	\item $\tilde{\Pi}_h : V_1 \rightarrow W_h$ is one-one and onto and hence its inverse $\tilde{\Pi}_h^{-1} : W_h \rightarrow V_1$ exists and it is defined by\\
	\begin{align}
	\tilde{\Pi}_h^{-1}(v) = \sum_{z \in \cV_{h,\mathcal{Q}}} v(z)\phi_z, ~~~v \in W_h,~~ \phi_z \in V_1.
	\end{align}
	\item  $\tilde{\Pi}_h^{-1}$ extends to whole discrete space $V_h$ by defining
	\begin{align} \label{piinv}
	\tilde{\Pi}_h^{-1}(v) = \sum_{z \in \cV_{h,\mathcal{Q}}} v(z)\phi_z, ~~~v \in V_h.
	\end{align}
	\item  For any $v \in V_h$ we have $\tilde{\Pi}_h^{-1}v= \tilde{\Pi}_h^{-1}v_1$ where $v_1$ is the $W_h$- component of $v$.
	\item For any $v_h \in V_h$ and $T \in \cT_h$ the following approximation properties hold \cite{gaddam2021two}:
	\begin{align} 
	||v_h-\tilde{\Pi}_h^{-1}v_h||_{L_p(T)} &\lesssim  h_T||\nabla v_h||_{L_p(T)},~~~1 \leq p \leq \infty, \label{key} \\ 	  h^{-2}_T||v_h-\tilde{\Pi}_h^{-1}v_h||_{L_1(T)} &\lesssim  |v_h|_{2,1,T}~. \label{4.16}
	\end{align}
\end{itemize}
In the next theorem, we state key properties for $\sigma_h^{(2)}$ that will follow on the similar lines of  [Lemma 7,\cite{gaddam2021two}].
\begin{theorem} \label{4.6}
	\begin{align} \label{DP11111}
	\sigma_h^{(2)}(z) & \leq 0~\forall~z \in \cV_{h,\mathcal{Q}},
	\end{align}
	\begin{align} \label{DR11}
	\sigma_h^{(2)}(z) &=0~\mbox{if}~u_h^{(2)}(z)>\chi(z),~~z \in \cV_{h,\mathcal{Q}}.
	\end{align}

\end{theorem}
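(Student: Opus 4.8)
\textbf{Proof proposal for Theorem \ref{4.6}.}

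The plan is to mimic the structure of the proof of Theorem \ref{signn} (and [Lemma 7, \cite{gaddam2021two}]), using the variational inequality \eqref{DDDD} together with the explicit form of the operators $\tilde{\Pi}_h$ and $\tilde{\Pi}_h^{-1}$. First I would record the analogue of \eqref{DP11}: given any $y_h \in V_h$ with $y_h(z) \geq 0$ for all $z \in \cV_{h,\mathcal{Q}}$, the element $v_h = u_h^{(2)} + y_h$ lies in $\cK_h^{(2)}$ (since $u_h^{(2)}(z) \geq \chi(z)$ and $y_h(z)\geq 0$ give $v_h(z)\geq\chi(z)$ at all quadrature vertices), so substituting into \eqref{DDDD} yields
\begin{equation*}
\mathcal{A}_h(u_h^{(2)}, y_h) \geq (f, y_h) \quad \text{for all } y_h \in V_h \text{ with } y_h(z) \geq 0 \ \forall z \in \cV_{h,\mathcal{Q}}.
\end{equation*}

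To prove \eqref{DP11111}, fix $z_0 \in \cV_{h,\mathcal{Q}}$ and test \eqref{DLLM} with $v_h = \phi_{z_0} \in V_1$. On the left, by the definition \eqref{inner} of $\langle\cdot,\cdot\rangle_h$ and since $\phi_{z_0}(z) = \delta_{z_0 z}$ on $\cV_{h,\mathcal{Q}}$, one gets $\langle \sigma_h^{(2)}, \phi_{z_0}\rangle_h = c\, \sigma_h^{(2)}(z_0)$ with a strictly positive weight $c = \sum_{T \ni z_0} \frac{|T|}{d+1} > 0$ (using that $\sigma_h^{(2)} \in V_1$ is determined by its values at $\cV_{h,\mathcal{Q}}$). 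On the right, $\tilde{\Pi}_h \phi_{z_0} = \sum_{z} \phi_{z_0}(z)\psi_z = \psi_{z_0} \in W_h \subset V_h$, which is the canonical $V_h$-basis function; it satisfies $\psi_{z_0}(z) = \delta_{z_0 z} \geq 0$ at every $z \in \cV_{h,\mathcal{Q}}$, so $y_h = \psi_{z_0}$ is admissible in the displayed inequality above. Hence the right-hand side $(f,\psi_{z_0}) - \mathcal{A}_h(u_h^{(2)}, \psi_{z_0}) \leq 0$, and dividing by $c$ gives $\sigma_h^{(2)}(z_0) \leq 0$.

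For \eqref{DR11}, suppose $u_h^{(2)}(z_0) > \chi(z_0)$ at some $z_0 \in \cV_{h,\mathcal{Q}}$. By continuity of $u_h^{(2)}|_T$ and $\chi$ on the (finitely many) elements $T$ containing $z_0$, there is $\varepsilon > 0$ such that $u_h^{(2)} \pm \varepsilon \psi_{z_0}$ still satisfies the nodal constraints at all quadrature vertices, i.e. $u_h^{(2)} \pm \varepsilon\psi_{z_0} \in \cK_h^{(2)}$. Substituting both choices $v_h = u_h^{(2)} \pm \varepsilon \psi_{z_0}$ into \eqref{DDDD} forces the equality $\mathcal{A}_h(u_h^{(2)}, \psi_{z_0}) = (f, \psi_{z_0})$, and since $\psi_{z_0} = \tilde{\Pi}_h \phi_{z_0}$ this is exactly $\langle \sigma_h^{(2)}, \phi_{z_0}\rangle_h = 0$, whence $\sigma_h^{(2)}(z_0) = 0$ after dividing by $c > 0$.

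The only slightly delicate point is the bookkeeping identifying $\langle \sigma_h^{(2)}, \phi_{z_0}\rangle_h$ with a positive multiple of the nodal value $\sigma_h^{(2)}(z_0)$ — one must be careful that $\phi_{z_0}$ is the $V_1$ basis function (used to probe $\langle\cdot,\cdot\rangle_h$) while $\psi_{z_0} = \tilde{\Pi}_h\phi_{z_0}$ is the $V_h$ basis function (the actual admissible test direction in \eqref{DDDD}), and that the local exactness \eqref{exact} of the quadrature is what makes $\langle\cdot,\cdot\rangle_h$ a genuine inner product on $V_1$ with positive diagonal weights. Everything else is a direct transcription of the argument for Theorem \ref{signn}.
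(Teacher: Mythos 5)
Your argument is correct and is essentially the proof the paper intends: the paper only remarks that Theorem \ref{4.6} follows along the lines of [Lemma 7, \cite{gaddam2021two}], i.e.\ exactly your scheme of testing \eqref{DDDD} with $u_h^{(2)}\pm\varepsilon\psi_{z_0}$ and identifying $\langle\sigma_h^{(2)},\phi_{z_0}\rangle_h$ with a positive multiple of $\sigma_h^{(2)}(z_0)$ via $\tilde{\Pi}_h\phi_{z_0}=\psi_{z_0}$. Only cosmetic remarks: since the quadrature points are interior to their element, the weight is simply $|T_0|/(d+1)$ for the unique $T_0$ containing $z_0$, and the continuity argument in the second part is unnecessary because $\psi_{z_0}$ vanishes at every other quadrature vertex, so only the strict inequality at $z_0$ is needed to admit both perturbations.
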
 \noindent The next lemma is an immediate consequence of the Theorem \ref{4.6}.
\begin{lemma} \label{signnn}
	There holds
	\begin{align*}
	Q_h\sigma_h^{(2)} & \leq 0 ~\mbox{everywhere on}~ \Omega,\\
	Q_h\sigma_h^{(2)} & = 0~~on ~\mathcal{N}_h^{(2)}.
	\end{align*}
where $Q_h$ is defined in equation \eqref{formula}.
\end{lemma}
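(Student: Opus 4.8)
The plan is to deduce Lemma \ref{signnn} directly from Theorem \ref{4.6} by unwinding the definition of $Q_h$ and relating the cell average of $\sigma_h^{(2)}$ over each $T\in\cT_h$ to its values at the quadrature points $\cV_{\cQ_T}$. The starting observation is that $\sigma_h^{(2)}\in V_1$, so on each element $T$ it is an affine polynomial; since the quadrature rule \eqref{exact} with $s=1$ is exact for $\mathcal{P}_1(T)$ and the nodes are exactly the vertices $\cV_{\cQ_T}$ of the quadrature $d$-simplex, we have the identity $\int_T \sigma_h^{(2)}\,dx = \frac{|T|}{d+1}\sum_{z\in\cV_{\cQ_T}} \sigma_h^{(2)}(z)$. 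Dividing by $|T|$ gives $Q_h\sigma_h^{(2)}|_T = \frac{1}{d+1}\sum_{z\in\cV_{\cQ_T}}\sigma_h^{(2)}(z)$, i.e.\ the cell average is the arithmetic mean of the quadrature-point values.

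From this representation the two assertions are immediate. For the first, every $z\in\cV_{\cQ_T}$ lies in $\cV_{h,\cQ}$, so \eqref{DP11111} gives $\sigma_h^{(2)}(z)\le 0$ for each such $z$; a nonnegative combination of nonpositive numbers is nonpositive, hence $Q_h\sigma_h^{(2)}|_T\le 0$. Since $T\in\cT_h$ was arbitrary, $Q_h\sigma_h^{(2)}\le 0$ everywhere on $\Omega$. For the second, if $T\in\cN_h^{(2)}$ then by definition $u_h^{(2)}(z)>\chi(z)$ for every $z\in\cV_{\cQ_T}$, so \eqref{DR11} forces $\sigma_h^{(2)}(z)=0$ for every such $z$, and therefore $Q_h\sigma_h^{(2)}|_T = \frac{1}{d+1}\sum_{z\in\cV_{\cQ_T}} 0 = 0$. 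Taking the union over $T\in\cN_h^{(2)}$ yields $Q_h\sigma_h^{(2)}=0$ on $\cN_h^{(2)}$.

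There is essentially no obstacle here; the only point requiring a moment's care is making explicit that $\sigma_h^{(2)}$, being piecewise affine, is integrated exactly by the $s=1$ quadrature whose nodes are precisely $\cV_{\cQ_T}$ — this is what converts the abstract definition \eqref{DLLM} into a pointwise-to-average transfer. One should also note that $Q_h\sigma_h^{(2)}$ is constant on each $T$, so ``everywhere on $\Omega$'' and ``on $\cN_h^{(2)}$'' are to be read cellwise, which is consistent with $Q_h$ mapping into $V_0$. I would therefore present the proof as: (i) record the exact-quadrature identity for affine functions; (ii) rewrite $Q_h\sigma_h^{(2)}|_T$ as the average of the nodal values; (iii) invoke \eqref{DP11111} and \eqref{DR11} respectively to conclude.

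\begin{proof}
	Fix $T\in\cT_h$. Since $\sigma_h^{(2)}\in V_1$, its restriction $\sigma_h^{(2)}|_T$ belongs to $\mathcal{P}_1(T)$, and the quadrature formula \eqref{exact} with $s=1$, whose nodes are exactly the vertices $\cV_{\cQ_T}$ of the quadrature $d$-simplex $\cQ_T$, is exact for $\mathcal{P}_1(T)$. Hence
	\begin{align*}
	Q_h\sigma_h^{(2)}\big|_T = \frac{1}{|T|}\int_T \sigma_h^{(2)}(x)\,dx = \frac{1}{d+1}\sum_{z\in\cV_{\cQ_T}} \sigma_h^{(2)}(z).
	\end{align*}
	Every $z\in\cV_{\cQ_T}$ belongs to $\cV_{h,\cQ}$, so by \eqref{DP11111} we have $\sigma_h^{(2)}(z)\le 0$ for each such $z$, and therefore $Q_h\sigma_h^{(2)}|_T\le 0$. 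As $T\in\cT_h$ was arbitrary, $Q_h\sigma_h^{(2)}\le 0$ everywhere on $\Omega$.

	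Now let $T\in\cN_h^{(2)}$. By the definition of $\cN_h^{(2)}$ we have $u_h^{(2)}(z)>\chi(z)$ for every $z\in\cV_{\cQ_T}$, so \eqref{DR11} yields $\sigma_h^{(2)}(z)=0$ for every such $z$. Consequently
	\begin{align*}
	Q_h\sigma_h^{(2)}\big|_T = \frac{1}{d+1}\sum_{z\in\cV_{\cQ_T}} \sigma_h^{(2)}(z) = 0.
	\end{align*}
	Since this holds for every $T\in\cN_h^{(2)}$, we conclude $Q_h\sigma_h^{(2)}=0$ on $\cN_h^{(2)}$.
\end{proof}
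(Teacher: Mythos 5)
Your proof is correct and takes essentially the same route as the paper, which likewise obtains the lemma by combining the exactness of the quadrature formula \eqref{exact} with the definition \eqref{formula} of $Q_h$ and the sign/vanishing properties \eqref{DP11111}, \eqref{DR11} at the nodes $\mathcal{V}_{\mathcal{Q}_T}$. One small labeling correction: in this quadratic setting $\mathcal{V}_{\mathcal{Q}_T}$ consists of the $s=2$ quadrature nodes (interior to $T$), not the $s=1$ nodes, so the exactness you should invoke is that of the $s=2$ rule, which is exact on $\mathcal{P}_2(T)\supset\mathcal{P}_1(T)$; the identity $Q_h\sigma_h^{(2)}|_T=\frac{1}{d+1}\sum_{z\in\mathcal{V}_{\mathcal{Q}_T}}\sigma_h^{(2)}(z)$ you rely on therefore still holds, and the rest of your argument is unaffected.
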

\begin{proof}
	The result of this Lemma can be realized by using equations \eqref{exact}, \eqref{formula} and \eqref{DP11111}.
\end{proof}
\section{A Posteriori Error Estimates}\label{sec5}
\par
\smallskip
\noindent Define the following estimators (here $t=1,2$)
\begin{align}
\eta_1&=\underset{T\in \mathcal{T}_h}{max}~ h_T^2 \|\Delta u_h^{(t)}+f-\sigma_h^{(t)}\|_{L_{\infty}(T)}, \label{est1}\\
 \eta_2&=\underset{e \in \mathcal{E}_h^i}{max}~h_e\|\sjump{\nabla u_h^{(t)}}\|_{L_{\infty}(e)},\label{est2}\\
\eta_3 &= \|\sjump{ u_h^{(t)}}\|_{L_{\infty}(\cE_h)}, \label{est3}\\
\eta_4 &=  \underset{T\in \mathcal{T}_h}{max} ~h_T^2||\sigma_h^{(2)}-Q_h\sigma_h^{(2)}||_{L_{\infty}(T)}\label{est4},\\
 \text{ and }\quad  \eta_5 &=Osc(f,{\cT}_h),
\end{align}
where the data oscillations $Osc(f,T)$ of $f$ over
$T \subseteq\cT_h$  is defined by
$$Osc(f,T)= \min_{c\in
P_0(T)}h_T^2\|f-c\|_{L_{\infty}(T)}.$$
and we define the operator $B_h \sigma_h^{(t)} $ as\\ \[
B_h \sigma_h^{(t)} := \begin{cases}
\sigma_h^{(1)} & \text{for}~~ t=1,\\
Q_h \sigma_h^{(2)} &\text{for}~~ t=2.
\end{cases}
\]
\begin{remark} \label{sig}
	Using Lemma \ref{signnn} and Theorem \ref{signn}, we have $B_h \sigma_h^{(t)} \leq 0$ for $t=1,2$. This sign property is crucial in proving the reliability estimates in this article.
\end{remark}

\noindent
In the upcoming analysis, we would use the notation $v_c$ for $\Pi_h(v)~~~\forall v \in L_1(\Omega)$. For  a posteriori error analysis in the supremum norm, we need to introduce an extended bilinear form $\mathcal{A}^*_h(\cdot,\cdot)$ to test non-discrete functions less regular than $H^1(\Omega)$. For $w \in W^{1,p}_0(\Omega) + V_h$ and $v \in W^{1,q}_0(\Omega) + V_h$, let
\begin{align*}
\mathcal{A}^*_h(\cdot,\cdot): \left[W^{1,p}_0(\Omega) + V_h\right] \times\left[W^{1,q}_0(\Omega) + V_h\right] \longrightarrow \mathbb{R}
\end{align*} in the following way
\begin{align}
\mathcal{A}^*_h(w,v) = a_h(w,v)+ b_h^*(w,v),
\end{align}
where $ 1 \leq q < \frac{d}{d-1}$, $d <p$ and
\begin{align} \label{right}
b_h^*(w,v) = - \sum_{e \in \cE_h} \int_e \{\{\nabla w_c\}\} [[v]] ds - \theta \sum_{e \in \cE_h} \int_e \{\{\nabla v_c\}\} [[w]] ds + \sum_{e \in \cE_h} \int_e \frac{\eta}{h_e} [[w]] [[v]] ds.
\end{align}
We introduce the extended continuous Lagrange multiplier $\tilde{\sigma} \in (W^{1,q}_0(\Omega) + V_h)^*$ as
\begin{align} \label{CCCC}
\langle \tilde{\sigma} (u), v \rangle = (f, v) - a(u,v)~ \forall~ v \in W^{1,q}_0(\Omega) + V_h.
\end{align} 
\begin{remark} \label{extt}
 	Note that, for any $v \in V=H^1_0(\Omega) \subset W^{1,q}_0(\Omega)$ we have 
	$
	\langle \tilde{\sigma} (u), v \rangle = \langle \sigma(u), v \rangle.
	$
\end{remark}

 \noindent
We introduce the residual or Galerkin functional $G_h^{(t)}: W^{1,q}_0(\Omega) + V_h \rightarrow \mathbb{R}$ as 
\begin{align}\label{QWA}
\langle G_h^{(t)}, v \rangle &=a_h(u-\tilde{u}_h^{(t)},v)+\langle  \tilde{\sigma} -B_h \sigma_h^{(t)}, v\rangle \quad \forall
v\in W^{1,q}_0(\Omega) + V_h \\
&=a_h(u,v)- a_h(\tilde{u}_h^{(t)},v) +\langle  \tilde{\sigma} , v\rangle-\langle B_h \sigma_h^{(t)}, v\rangle \nonumber \quad\\
&=(f,v)- a_h(\tilde{u}_h^{(t)},v) -(B_h \sigma_h^{(t)}, v) \nonumber \quad \text{(using remark (\ref{extt}))},
\end{align}
\noindent
where $(B_h \sigma_h^{(t)}, v) = \langle B_h \sigma_h^{(t)}, v \rangle~~ \forall v \in W^{1,q}_0(\Omega) + V_h $ and $\tilde{u}_h^{(t)}$ stands for $E_h u_h^{(t)}$. Next, we introduce the corrector function $w^{(t)}$ which is defined by 
\begin{align}\label{FFF}
\int_{\Omega} \nabla w^{(t)} \cdot \nabla v~dx = \langle G_h^{(t)}, v \rangle \quad \forall v\in H^1_0(\Omega)
\end{align}
and the wellposedness of the problem \eqref{FFF} follows from Lax-Milgram lemma \cite{brenner2007mathematical}. This function $w^{(t)}$  plays a crucial role in defining the upper and lower barriers of the  continuous solution $u$. We modify the conforming part of discrete solution $\tilde{u}_h^{(t)}$ by adding this correction function $w^{(t)}$ appropriately. As mentioned previously, this approach is slightly different from the article \cite{nochetto2003pointwise,BGP:2021}. The continuous maximum principle is employed in order to bound the error term $||u-{u}_h^{(t)}||_{L_{\infty}(\Omega)}$. 
\subsection{Upper and Lower Barriers of the solution $u$} We define the  barrier functions for the solution $u$ as,
\begin{align}
u_{^{(t)}}^{*}&=\tilde{u}_h^{(t)}+w^{(t)}+\|w^{(t)}\|_{L_{\infty}(\Omega)}+\|(\chi-\tilde{u}_h^{(t)})^{+}\|_{L_{\infty}(\Omega)}, \label{AQAZ}\\
u^{^{(t)}}_{*}&=\tilde{u}_h^{(t)}+w^{(t)}-\|w^{(t)}\|_{L_{\infty}(\Omega)}-\|(\tilde{u}_h^{(t)}-\chi)^+\|_{L_{\infty}\{B_h \sigma_h^{(t)}<0\}}. \label{AQA}
\end{align}
 In the next lemma, we prove that $u_{^{(t)}}^{*}$ and $u^{^{(t)}}_{*}$ are the upper and lower bounds of the continuous solution $u$.
\begin{lemma} \label{lem:barrier}
Let $u_{^{(t)}}^{*}$ and $u^{^{(t)}}_{*}$ be as defined in \eqref{AQAZ} and \eqref{AQA}, respectively. Then 
\begin{align*}
u \leq u_{^{(t)}}^{*}  \qquad \text{and}\qquad u^{^{(t)}}_{*}\leq u.
\end{align*}
\end{lemma}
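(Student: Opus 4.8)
The plan is to prove the two barrier inequalities separately, using the continuous maximum principle applied to a suitable auxiliary obstacle-type problem, exploiting the sign property of $B_h\sigma_h^{(t)}$ recorded in Remark \ref{sig}. Throughout write $\tilde u_h := \tilde u_h^{(t)} = E_h u_h^{(t)}$ and $w := w^{(t)}$, and recall from \eqref{FFF} that $-\Delta w = G_h^{(t)}$ in $H^{-1}(\Omega)$, while from \eqref{QWA} $\langle G_h^{(t)},v\rangle = (f,v) - a_h(\tilde u_h,v) - (B_h\sigma_h^{(t)},v)$ for all $v\in H^1_0(\Omega)$. Consequently, in the distributional sense on $\Omega$,
\begin{align*}
-\Delta(\tilde u_h + w) = f - B_h\sigma_h^{(t)} \quad\text{and}\quad -\Delta u = f - \sigma \ \ (\text{with } \sigma \le 0),
\end{align*}
where the first identity uses that $\tilde u_h \in V_c \subset H^1_0(\Omega)$ so that $a_h(\tilde u_h,v) = a(\tilde u_h,v) = \int_\Omega \nabla\tilde u_h\cdot\nabla v$.

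For the upper barrier $u \le u^{*}_{(t)}$, set $\phi := u^{*}_{(t)} - u = (\tilde u_h + w - u) + \|w\|_{L_\infty(\Omega)} + \|(\chi-\tilde u_h)^+\|_{L_\infty(\Omega)}$. Since $B_h\sigma_h^{(t)} \le 0$ and $\sigma \le 0$ we only get $-\Delta\phi = -B_h\sigma_h^{(t)} + \sigma$, whose sign is not definite, so a direct maximum principle on $\phi$ does not close. Instead I would argue on the contact/non-contact decomposition: on the set where $B_h\sigma_h^{(t)} = 0$ (which contains $\mathcal N_h^{(t)}$) we have $-\Delta\phi = \sigma \le 0$, so $\phi$ is superharmonic there; on the complementary (contact) set, $u_h^{(t)}$ — hence, up to the $E_h$-averaging controlled constant $\|(\chi - \tilde u_h)^+\|_{L_\infty}$, also $\tilde u_h$ — lies essentially at the obstacle level, so that $\tilde u_h + \|(\chi - \tilde u_h)^+\|_{L_\infty(\Omega)} \ge \chi \ge u$ is not quite immediate but is the mechanism: combining this with $w + \|w\|_{L_\infty(\Omega)} \ge 0$ shows $\phi \ge 0$ on the contact region. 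Then I apply the weak maximum principle on each connected component of $\{B_h\sigma_h^{(t)} = 0\}$: $\phi$ is superharmonic there, and on the boundary of that set $\phi \ge 0$ (either by the contact-region bound just established or because $\phi = 0$ cannot be negative on $\partial\Omega$ since $\tilde u_h = u = 0$ there and $w + \|w\|_{L_\infty} \ge 0$), hence $\phi \ge 0$ throughout by the minimum principle for superharmonic functions.

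For the lower barrier $u^{(t)}_{*} \le u$, set $\psi := u - u^{(t)}_{*} = (u - \tilde u_h - w) + \|w\|_{L_\infty(\Omega)} + \|(\tilde u_h - \chi)^+\|_{L_\infty\{B_h\sigma_h^{(t)}<0\}}$. Here $-\Delta\psi = B_h\sigma_h^{(t)} - \sigma$, again of indefinite sign, so I split according to where $B_h\sigma_h^{(t)} < 0$ versus $= 0$. On $\{B_h\sigma_h^{(t)} = 0\}$ we have $-\Delta\psi = -\sigma \ge 0$, so $\psi$ is subharmonic and attains its maximum on the boundary of that region; there I must show $\psi \le$ (the global max of $\psi$), which is automatic, or rather I run the argument the other way: the key is to show $\psi$ cannot go negative, so I look at a would-be negative interior minimum. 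On $\{B_h\sigma_h^{(t)} < 0\}$, $-\Delta\psi = B_h\sigma_h^{(t)} - \sigma$: using $\sigma \le 0$ this is $\le B_h\sigma_h^{(t)} + |\sigma|$, still not signed, so the honest route is: the correction $w$ was designed precisely so that $\psi$ is superharmonic nowhere it matters — concretely, on $\{B_h\sigma_h^{(t)}<0\}$ one uses $u \ge \chi$ and $\tilde u_h \le \chi + \|(\tilde u_h-\chi)^+\|_{L_\infty\{B_h\sigma_h^{(t)}<0\}}$ to get the obstacle-side bound $\psi \ge (u - \chi) - \|(\tilde u_h-\chi)^+\|_{\cdots} - w + \|w\|_{L_\infty} + \|(\tilde u_h-\chi)^+\|_{\cdots} \ge -w + \|w\|_{L_\infty} \ge 0$, while on $\{B_h\sigma_h^{(t)} = 0\}$ the function $\psi$ is subharmonic so its minimum is on $\partial\{B_h\sigma_h^{(t)}=0\}$, where the previous bound or the boundary condition gives $\psi \ge 0$. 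Hence $\psi \ge 0$ on all of $\Omega$.

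The main obstacle is the contact-region estimate and making rigorous the passage from nodal/integral contact conditions on $u_h^{(t)}$ (i.e. $Q_h(u_h^{(1)}) = Q_h(\chi)$ or $u_h^{(2)}(z) = \chi(z)$ at quadrature vertices) to a genuine pointwise lower bound $\tilde u_h^{(t)} + \|(\chi - \tilde u_h^{(t)})^+\|_{L_\infty(\Omega)} \ge \chi$ valid a.e., because $E_h$-averaging and the discrete nature of the constraint mean $\tilde u_h^{(t)}$ need not lie above $\chi$ — this is exactly why the correction term $\|(\chi-\tilde u_h^{(t)})^+\|_{L_\infty(\Omega)}$ (respectively its restricted variant in \eqref{AQA}) was inserted, and one must check it is large enough on the relevant set. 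A secondary technical point is justifying the componentwise maximum-principle argument on the open set $\{B_h\sigma_h^{(t)} \ne 0\}$, including the regularity needed to talk about traces of $\psi,\phi$ on its boundary; since $u\in C^{0,\alpha}(\bar\Omega)$ by Remark \ref{sign} and $\tilde u_h, w$ are continuous, all the barrier functions are continuous on $\bar\Omega$, which suffices for the classical minimum principle for superharmonic (resp. maximum principle for subharmonic) functions.
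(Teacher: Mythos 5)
Your lower-barrier half is essentially viable: the pointwise bound $\psi\ge 0$ on $\{B_h\sigma_h^{(t)}<0\}$ is correct, and on $\{B_h\sigma_h^{(t)}=0\}$ you have $-\Delta\psi=-\sigma\ge 0$, so $\psi$ is in fact \emph{super}harmonic (you call it subharmonic), which is exactly what lets its minimum sit on the boundary of that region; with that terminology fixed and the weak maximum principle applied elementwise (recall $B_h\sigma_h^{(t)}$ is piecewise constant, so these sets are unions of elements) this half closes. Also, the "main obstacle" you flag at the end is not an obstacle at all: $\tilde u_h^{(t)}(x)+\|(\chi-\tilde u_h^{(t)})^+\|_{L_\infty(\Omega)}\ge\chi(x)$ holds trivially at every point, by definition of the positive part and the sup norm; no passage from the discrete constraints is needed for it.

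The genuine gap is in the upper barrier, where you use the wrong decomposition and the argument breaks at both pieces. On $\{B_h\sigma_h^{(t)}=0\}$ you have $-\Delta\phi=\sigma\le 0$, so $\phi=u^*_{(t)}-u$ is \emph{sub}harmonic there; the "minimum principle for superharmonic functions" you invoke does not apply, and boundary nonnegativity gives you nothing about the interior. On $\{B_h\sigma_h^{(t)}<0\}$ your mechanism rests on "$\chi\ge u$", which is false ($u\ge\chi$, with equality only on the continuous contact set); the discrete contact region is not contained in $\{u=\chi\}$, so no pointwise bound $\phi\ge 0$ is available there from the obstacle alone. The correct split for the upper barrier is by the \emph{continuous} contact set: on $\{u>\chi\}$ one has $\sigma=0$, hence $-\Delta\phi=-B_h\sigma_h^{(t)}\ge 0$ (genuinely superharmonic), while on $\{u=\chi\}$ one has $u=\chi\le \tilde u_h^{(t)}+\|(\chi-\tilde u_h^{(t)})^+\|_{L_\infty(\Omega)}\le u^*_{(t)}$ pointwise. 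This is precisely the structure the paper uses, but in variational rather than pointwise form: it tests with $v=\max\{u-u^*_{(t)},0\}\in H^1_0(\Omega)$, obtains $\int_\Omega|\nabla v|^2\,dx=(B_h\sigma_h^{(t)}-\sigma,v)\le -\int_\Omega v\,d\sigma$ using $B_h\sigma_h^{(t)}\le 0$ and $v\ge 0$, and then kills the right-hand side by the support disjointness $\mathrm{supp}(v)\subset\{u>\chi\}$, $\mathrm{supp}(\sigma)\subset\{u=\chi\}$; the lower barrier is handled symmetrically with $v=\max\{u^{(t)}_*-u,0\}$, dropping $(\sigma,v)\le 0$ and showing $B_h\sigma_h^{(t)}v\equiv 0$ by the same contradiction you use. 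That energy route also sidesteps the regularity and trace issues your componentwise maximum-principle argument would have to address.
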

\begin{proof}
(1) First, we prove $u^{^{(t)}}_{*}\leq u$. Let $v:=$~max$\{u^{{(t)}}_{*}-u,0\}$. Note that $u^{^{(t)}}_{*} \leq u$ is equivalent to $v=0$ in $\Omega$, therefore we will show $v=0$ in $\Omega.$ Since $u,w^{{(t)}},\tilde{u}_h^{(t)} \in H_0^1(\Omega)$ and we have
\begin{align*}
(u^{(t)}_{*}-u)|_{\partial \Omega} &=\tilde{u}_h^{(t)}+w^{(t)}-\|w^{(t)}\|_{L_{\infty}(\Omega)}-\|(\tilde{u}_h^{(t)}-\chi)^+\|_{L_{\infty}\{B_h \sigma_h^{(t)}<0\}}-u, \\
& \leq 0.
\end{align*}
Therefore, $v=0$ on $\partial \Omega$. By Poincare inequality \cite{s1989topics}, it is sufficient to show that $||\nabla v||_{L_2(\Omega)}=0$. A use of equations \eqref{AQAZ}, \eqref{FFF}, \eqref{QWA} and remark (\ref{sign}) yields
\begin{align*}
\int_{\Omega} |\nabla v|^2~dx &= \int_{\Omega}\nabla(u^{(t)}_{*}-u)\cdot \nabla v ~dx, \\ &= \int_{\Omega}\nabla(\tilde{u}_h^{(t)}-u)\cdot \nabla v ~dx +\int_{\Omega} \nabla w^{(t)} \cdot \nabla v~dx, \\
&= \int_{\Omega} \nabla (\tilde{u}_h^{(t)}-u) \cdot \nabla v~dx +\langle G_h^{(t)}, v \rangle,\\
&= (\sigma- B_h \sigma_h^{(t)},v) \leq -\int_{\Omega} B_h {\sigma}_h^{(t)}~ v ~dx.
\end{align*}
It suffices to prove $\int_{\Omega} B_h \sigma_h^{(t)}~ v~dx=0$. Firstly, we show that 
 $B_h \sigma_h^{(t)}=0$ on $T \in \cT_h$ if there exist $x \in T$ such that $v(x)>0$. Suppose, by contradiction there exist $x \in int(T)$ such that $v(x)>0$ and $B_h \sigma^{(t)}_h(x)<0$. We have
 \begin{align*}
 v(x)>0 \implies u(x) &< u^{(t)}_{*}(x) \leq \tilde{u}_h^{(t)}(x)-\|(\tilde{u}_h^{(t)}-\chi)^+\|_{L_{\infty}\{B_h \sigma_h^{(t)}<0\}}, \\ & \leq \tilde{u}_h^{(t)}(x)-(\tilde{u}_h^{(t)}-\chi)^+(x), \\ 
 & \leq \chi(x),
 \end{align*}
 which is a contradiction as $u \in \cK$. Then we obtain
 \begin{align*}
 \int_{\Omega}B_h {\sigma}_h^{(t)} v~dx &= \sum_{T \in \cT_h} \int_{T} B_h {\sigma}_h^{(t)} v~dx \\
 &=0.
 \end{align*}
 Hence we get the desired result. Next, we prove that $u \leq u_{^{(t)}}^{*}$.\\

 (2) ~Let $v:=$ max$\{u-u_{(t)}^*,0\}$. We claim that $v=0$ in $\Omega$. First, observe that $v|_{\partial \Omega}=0$ since
\begin{align*}
(u-u_{(t)}^{*})|_{\partial \Omega}&=u-\tilde{u}_h^{(t)}-w^{(t)}-\|w^{(t)}\|_{L_{\infty}(\Omega)}-\|(\chi-\tilde{u}_h^{(t)})^{+}\|_{L_{\infty}(\Omega)},\\
& \leq 0.
\end{align*}
Our claim will be true if we can show that $||\nabla v||_{L_2(\Omega)}=0$. Employing equations \eqref{FFF}, \eqref{QWA} and remark (\ref{sig}), we find
\begin{align*}
\int_{\Omega}|\nabla v|^2 ~dx&= \int_{\Omega} \nabla u \cdot \nabla v~dx - \int_{\Omega} \nabla u_{(t)}^{*} \cdot \nabla v~dx, \\
&=\int_{\Omega} \nabla u \cdot \nabla v~dx -\int_{\Omega} \nabla (\tilde{u}_h^{(t)}+w^{(t)}) \cdot \nabla v~dx, \\
&=\int_{\Omega} \nabla u \cdot \nabla v~dx -\int_{\Omega} \nabla \tilde{u}_h^{(t)} \cdot \nabla v~dx -\langle G_h^{(t)}, v \rangle,\\
&= (B_h \sigma_h^{(t)}-\sigma, v) \leq (-\sigma, v) = -\int_{\Omega} v ~d{\sigma}.
\end{align*}
We further show that $supp(v)$ and  $supp(\sigma)$ are two disjoint sets which would imply that $\int_{\Omega} v~ d{\sigma}=0$ and hence, the claim holds. Let $x \in \Omega$ be such that $v(x)>0$,
\begin{align*}
\implies u(x)&> u^{*}_{(t)}(x)=\tilde{u}_h^{(t)}(x)+w^{(t)}(x)+\|w^{(t)}\|_{L_{\infty}(\Omega)}+\|(\chi-\tilde{u}_h^{(t)})^{+}\|_{L_{\infty}(\Omega)}.\\
&\geq \tilde{u}_h^{(t)}(x)+\|(\chi-\tilde{u}_h^{(t)})^{+}\|_{L_{\infty}(\Omega)},\\ & \geq \tilde{u}_h^{(t)}(x)+(\chi-\tilde{u}_h^{(t)})^{+}(x), \\ 
& \geq \chi(x),
\end{align*}
$\implies$ $supp(v) \subset \{u > \chi\}$ and $supp(\sigma) \subset \{u=\chi \}$ (remark (\ref{sign})). Therefore, the proof of the lemma follows.  \end{proof} 
\noindent 
Next, we use Lemma \ref{lem:barrier} to obtain an estimate for the pointwise error $||u-u_h^{(t)}||_{L_{\infty}(\Omega)}$.
\begin{lemma} \label{reliability}
	It holds that
	\begin{align} \label{theorem}
	||u-u_h^{(t)}||_{L_{\infty}(\Omega)} \lesssim ||w^{(t)}||_{L_{\infty}(\Omega)} + \|(\chi-u_h^{(t)})^{+}\|_{L_{\infty}(\Omega)}+
		\|(u_h^{(t)}-\chi)^+\|_{L_{\infty}\{B_h \sigma_h^{(t)}<0\}} + \eta_3.
	\end{align}
\end{lemma}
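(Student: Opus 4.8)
The plan is to feed the two-sided pointwise bound $u^{^{(t)}}_{*} \le u \le u_{^{(t)}}^{*}$ from Lemma \ref{lem:barrier} into a triangle-inequality argument, using the explicit formulas \eqref{AQAZ} and \eqref{AQA} for the barriers together with the approximation property \eqref{pp} of the averaging operator $E_h$ to replace the conforming function $\tilde{u}_h^{(t)} = E_h u_h^{(t)}$ by $u_h^{(t)}$ at the cost of a multiple of $\eta_3$.

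First I would subtract $\tilde{u}_h^{(t)} + w^{(t)}$ from all three members of $u^{^{(t)}}_{*} \le u \le u_{^{(t)}}^{*}$. Using \eqref{AQAZ} and \eqref{AQA} this gives, at each point of $\Omega$,
\begin{align*}
-\|w^{(t)}\|_{L_{\infty}(\Omega)} - \|(\tilde{u}_h^{(t)}-\chi)^{+}\|_{L_{\infty}\{B_h \sigma_h^{(t)}<0\}}
\;\le\; u - \tilde{u}_h^{(t)} - w^{(t)}
\;\le\; \|w^{(t)}\|_{L_{\infty}(\Omega)} + \|(\chi-\tilde{u}_h^{(t)})^{+}\|_{L_{\infty}(\Omega)}.
\end{align*}
Moving $w^{(t)}$ back to the left, taking absolute values, bounding a maximum of two quantities by their sum and then taking the supremum over $\Omega$ yields
\begin{align*}
\|u - \tilde{u}_h^{(t)}\|_{L_{\infty}(\Omega)} \lesssim \|w^{(t)}\|_{L_{\infty}(\Omega)} + \|(\chi-\tilde{u}_h^{(t)})^{+}\|_{L_{\infty}(\Omega)} + \|(\tilde{u}_h^{(t)}-\chi)^{+}\|_{L_{\infty}\{B_h \sigma_h^{(t)}<0\}}.
\end{align*}

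Next I would convert the three $\tilde{u}_h^{(t)}$-terms on the right into $u_h^{(t)}$-terms. For the first, $\|u - u_h^{(t)}\|_{L_{\infty}(\Omega)} \le \|u - \tilde{u}_h^{(t)}\|_{L_{\infty}(\Omega)} + \|E_h u_h^{(t)} - u_h^{(t)}\|_{L_{\infty}(\Omega)}$, and the last term is $\lesssim \|\sjump{u_h^{(t)}}\|_{L_{\infty}(\cE_h)} = \eta_3$ by \eqref{pp}. For the two obstacle-mismatch terms I would use the elementary inequality $(a+b)^{+} \le a^{+} + |b|$ with $b = u_h^{(t)} - \tilde{u}_h^{(t)}$ (respectively $b = \tilde{u}_h^{(t)} - u_h^{(t)}$), again invoking \eqref{pp}, together with the trivial monotonicity $\|\cdot\|_{L_{\infty}(\{B_h\sigma_h^{(t)}<0\})} \le \|\cdot\|_{L_{\infty}(\Omega)}$ applied to the averaging error. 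This gives $\|(\chi-\tilde{u}_h^{(t)})^{+}\|_{L_{\infty}(\Omega)} \lesssim \|(\chi-u_h^{(t)})^{+}\|_{L_{\infty}(\Omega)} + \eta_3$ and $\|(\tilde{u}_h^{(t)}-\chi)^{+}\|_{L_{\infty}\{B_h\sigma_h^{(t)}<0\}} \lesssim \|(u_h^{(t)}-\chi)^{+}\|_{L_{\infty}\{B_h\sigma_h^{(t)}<0\}} + \eta_3$. Substituting these three bounds into the displayed estimate for $\|u - u_h^{(t)}\|_{L_{\infty}(\Omega)}$ yields \eqref{theorem}.

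The argument is essentially bookkeeping once Lemma \ref{lem:barrier} is available, so no serious obstacle arises. The only point requiring a little care is that the lower barrier carries the restricted supremum over the set $\{B_h\sigma_h^{(t)}<0\}$ rather than over all of $\Omega$, so when replacing $\tilde{u}_h^{(t)}$ by $u_h^{(t)}$ inside $\|(\tilde{u}_h^{(t)}-\chi)^{+}\|_{L_{\infty}\{B_h\sigma_h^{(t)}<0\}}$ one must bound the averaging error on that subset; this is harmless since the subset lies in $\Omega$ and $\eta_3$ is already a global quantity.
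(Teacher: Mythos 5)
Your proposal is correct and follows essentially the same route as the paper: invoke Lemma \ref{lem:barrier} to bound $\|u-\tilde{u}_h^{(t)}\|_{L_{\infty}(\Omega)}$ by $\|w^{(t)}\|_{L_{\infty}(\Omega)}$ plus the two obstacle-mismatch terms in $\tilde{u}_h^{(t)}$, then use the triangle inequality and the averaging estimate \eqref{pp} to replace $\tilde{u}_h^{(t)}$ by $u_h^{(t)}$ at the cost of $\eta_3$, exactly as in the paper's equations \eqref{5.9}--\eqref{5.11}. The only difference is that you spell out the pointwise subtraction step that the paper states directly, which is a matter of detail rather than of approach.
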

\begin{proof}
		In the view of Lemma \ref{lem:average}, we have
	\begin{align}
	\|(\chi-\tilde{u}_h^{(t)})^{+}\|_{L_{\infty}(\Omega)} & \lesssim 	\|(\chi-u_h^{(t)})^{+}\|_{L_{\infty}(\Omega)}  + 	\|u_h^{(t)}-\tilde{u}_h^{(t)}\|_{L_{\infty}(\cT_h)},   \nonumber\\ & \lesssim \|(\chi-u_h^{(t)})^{+}\|_{L_{\infty}(\Omega)}  + \eta_3 \label{5.9}
	\end{align}
	and 
	\begin{align}	
	\|(\tilde{u}_h^{(t)}-\chi)^+\|_{L_{\infty}\{B_h \sigma_h^{(t)}<0\}} & \lesssim 	\|(u_h^{(t)}-\chi)^+\|_{L_{\infty}\{B_h \sigma_h^{(t)}<0\}} + 	\|\tilde{u}_h^{(t)}-u_h^{(t)}\|_{L_{\infty}\{B_h \sigma_h^{(t)}<0\}}, \nonumber \\ & \lesssim 	\|(u_h^{(t)}-\chi)^+\|_{L_{\infty}\{B_h \sigma_h^{(t)}<0\}} + \eta_3. \label{5.10}
	\end{align}
	A use of triangle inequality yields
	\begin{align} \label{5.11}
		||u-u_h^{(t)}||_{L_{\infty}(\Omega)}  \lesssim 	||u-\tilde{u}_h^{(t)}||_{L_{\infty}(\Omega)}  + 	||\tilde{u}_h^{(t)}-u_h^{(t)}||_{L_{\infty}(\Omega)}.
	\end{align}
	From Lemma \ref{lem:barrier}, we have

	\begin{align*}
	||u-\tilde{u}_h^{(t)}||_{L_{\infty}(\Omega)} \lesssim 2||w^{(t)}||_{L_{\infty}(\Omega)} + \|(\chi-\tilde{u}_h^{(t)})^{+}\|_{L_{\infty}(\Omega)}+
	\|(\tilde{u}_h^{(t)}-\chi)^+\|_{L_{\infty}\{B_h \sigma_h^{(t)}<0\}}.
	\end{align*}
	Finally, the proof follows using equations \eqref{pp}, \eqref{5.9},  \eqref{5.10} and \eqref{5.11}.
\end{proof}
\noindent
To prove the main reliability estimate, we observe from Lemma \ref{reliability}, it is enough to provide an estimate in the maximum norm of $w^{(t)}$ in terms of the local error estimator terms defined in \eqref{est1}-\eqref{est2}-\eqref{est3}-\eqref{est4}. The technique which we used is motivated by the articles \cite{nochetto2003pointwise,demlow2012pointwise}. We perform the analysis using the Green's function of the unconstrained Poisson problem taking into account that Green's function is singular at $y^* \in \Omega$ where $w^{(t)}$ attains it's maximum. This helps us to improve the power of the logarithmic factor present in the resulting estimates.
\subsubsection{Bound on $||w^{(t)}||_{L_{\infty}(\Omega)}$}Let $y^* \in \Omega \setminus \partial\Omega$ be such that $|w^{(t)}(y^*)|=||w^{(t)}||_{L_{\infty}(\Omega)}$, then in the view of equations \eqref{eqq} and \eqref{FFF}, we have
\begin{align}
w^{(t)}(y^*) =\int_{\Omega} w^{(t)}(\zeta) \delta_{y^*}(\zeta) d \zeta &= a(w^{(t)}, G(y^*, \cdot))= \langle G_h^{(t)}, G \rangle. \label{winf}
\end{align}
where $G(y^*, \cdot)$ denotes the Green's function with singularity at $y^*$. Therefore, to derive the upper bound on $||w^{(t)}||_{L_{\infty}(\Omega)}$, we try to bound the term $\langle G_h^{(t)}, G \rangle$. Let $\cT_1 \in \cT_h$ be the patch of elements touching $T^*$ where $y^* \in T^*$. Further, assume $\cT_2$ be the set of elements touching $\cT_1$. To find the bound on $\langle G_h^{(t)}, G \rangle$, we used the disjoint decomposition of our mesh partition $\cT_h$, i.e., $(\cT_h \cap \cT_1) \cup (\cT_h \setminus \cT_1)$. Finally, from the definition of $G_h^{(t)}$, we have
\begin{align*}
 \langle G_h^{(t)}, G \rangle = a_h(u-\tilde{u}_h^{(t)},G)+\langle \tilde{\sigma}-B_h \sigma_h^{(t)}, G \rangle.
\end{align*}
Next, we bound the terms $\langle G_h^{(t)}, G \rangle$ for $t=1,2$ seperately (Case-I and Case-II) and the bound on $||w^{(t)}||_{L_{\infty}(\Omega)}$ follows using \eqref{winf}.
\subsubsection{Case-I (t=1)} \label{ssec1}
\begin{align*}
\langle G_h^{(1)}, G \rangle &= a_h(u-\tilde{u}_h^{(1)},G)+\langle \tilde{\sigma}-B_h \sigma_h^{(1)}, G \rangle \\ & = a_h(u-\tilde{u}_h^{(1)},G)+\langle \tilde{\sigma}- \sigma_h^{(1)}, G \rangle \hspace{1cm}\quad (\text{from definition of}~B_h \sigma_h^{(1)})  \\ &= a(u, G) + \langle \tilde{\sigma}, G \rangle - a_h(\tilde{u}_h^{(1)},G)- (\sigma_h^{(1)}, G) \quad (\text{from definition of}~a_h(\cdot, \cdot)) \\ &= (f, G) - a_h(\tilde{u}_h^{(1)},G)- (\sigma_h^{(1)}, G) \qquad (\text{from remark}~(\ref{extt})) \\ & = (f, G- G_c) + (f, G_c) -a_h(u_h^{(1)},G-G_c) -a_h(u_h^{(1)},G_c) \\ & \hspace{0.5cm} -a_h(\tilde{u}_h^{(1)}-u_h^{(1)},G) - (\sigma_h^{(1)}, G-G_c )- ( \sigma_h^{(1)}, G_c ) \\ & = \left((f, G- G_c)-a_h(u_h^{(1)},G-G_c)-( \sigma_h^{(1)}, G-G_c) \right) + (f, G_c)\\ & \hspace{0.5cm} -a_h(u_h^{(1)},G_c)-( \sigma_h^{(1)}, G_c )-a_h(\tilde{u}_h^{(1)}-u_h^{(1)},G). \quad (\text{rearranging terms}) \end{align*}
Adding and subtracting $b^*_h(u_h^{(1)},G_c)$, and performing integration by parts \cite{s1989topics} in the above equation, we get
\begin{align}
\langle G_h^{(1)}, G \rangle  &= \Bigg( \sum_{T \in \cT_h} \int_T (f+\Delta_h u_h^{(1)}-\sigma_h^{(1)})(G-G_c)~dx-\sum_{T \in \cT_h}\int_{\partial T}\frac{\partial u_h|_T}{\partial \eta} (G-G_c) \Bigg) \nonumber \\ & \hspace{0.5cm}+b^*_h(u_h^{(1)},G_c) + (f,G_c) - \mathcal{A}^*_h(u_h^{(1)},G_c)- ( \sigma_h^{(1)}, G_c )-a_h(\tilde{u}_h^{(1)}-u_h^{(1)},G) \nonumber
\end{align}
Using $\mathcal{A}_h(v_h,w_h)= \mathcal{A}^*_h (v_h, w_h) \quad \forall v_h, w_h \in V_h$, we obtain
\begin{align} \label{5.18}
\langle G_h^{(1)}, G \rangle &= \sum_{T \in \cT_h} \int_T (f+\Delta_h u_h^{(1)}-\sigma_h^{(1)})(G-G_c)~dx \nonumber \\ & \hspace{0.5cm} -\sum_{e \in \cE_h^i}\int_e\sjump{\nabla u_h^{(1)}}\{\{G-G_c\}\}~ds - \sum_{e \in \cE_h} \int_e \{\{\nabla u_h^{(1)}\}\} [[G-G_c]] ~ds  \nonumber \\ & \hspace{0.5cm}+b^*_h(u_h^{(1)},G_c) + (f,G_c) - \mathcal{A}_h(u_h^{(1)},G_c) - ( \sigma_h^{(1)}, G_c )-a_h(\tilde{u}_h^{(1)}-u_h^{(1)},G).
\end{align} 
For any $w \in V_h$, it holds $( \sigma_h^{(1)},w )= (f,w) - \mathcal{A}_h(u_h^{(1)},w)$  (recalling \eqref{sigma1}).

Finally, equation \eqref{5.18} reduces to
\begin{align} \label{eq1}
\langle G_h^{(1)}, G \rangle_{-1,1} &= \left(\sum_{T \in \cT_h} \int_T(f+\Delta_h u_h^{(1)}-\sigma_h^{(1)})( G-G_c)~ dx -\sum_{e \in \cE_h^i}\int_e\sjump{\nabla u_h^{(1)}}\{\{G-G_c\}\}~ds \right) \nonumber \\ & \hspace{0.5cm} +b^*_h(u_h^{(1)},G) -\left(b^*_h(u_h^{(1)},G-G_c)+ \sum_{e \in \cE_h} \int_e \{\{\nabla u_h^{(1)}\}\} [[G-G_c]]~ds\right)  \nonumber \\ & \hspace{0.5cm} -a_h(\tilde{u}_h^{(1)}-u_h^{(1)},G).
\end{align}
Next, we bound each terms on the right hand side of equation \eqref{eq1}.  Using H\"older's inequality, we obtain
\begin{enumerate}
	\item[a)] \begin{align*}
	\sum_{T \in \cT_h} \int_T (f+\Delta_h u_h^{(1)}-\sigma_h^{(1)})( G&-G_c)~ dx \\  & \leq \sum_{T \in \cT_h} \big|\big|h^2_T (f+\Delta_h u_h^{(1)}-\sigma_h^{(1)})\big|\big|_{L_{\infty}(T)} \big|\big|h^{-2}_T(G-G_c)\big|\big|_{L_{1}(T)}, \\& \lesssim \eta_1 \sum_{T \in \cT_h} \big|\big|h^{-2}_T(G-G_c)\big|\big|_{L_{1}(T)}.
	\end{align*}
 Using the approximation properties of the local projection operator $\Pi_h$ (recalling Lemma \ref{approx}), we obtain
\begin{align*} 
\sum_{T \in \cT_h} \int_T (f+\Delta_h u_h^{(1)}-\sigma_h^{(1)})&( G-G_c)~ dx \\ & \lesssim  \eta_1 \left(\sum_{T \in \cT_h \cap \cT_1}\big|\big|h^{-2}_T(G-G_c)\big|\big|_{L_{1}(T)} + \sum_{T \in \cT_h \setminus \cT_1} \big|\big|h^{-2}_T(G-G_c)\big|\big|_{L_{1}(T)} \right), \\ & \lesssim \eta_1 \left(\sum_{T \in \cT_h \cap \cT_1} h^{-1}_T\big|G\big|_{1,1,T} + \sum_{T \in \cT_h \setminus \cT_1} \big|G\big|_{2,1,T} \right).
\end{align*}
	\item[b)] Using H\"older's inequality, we find
	 \begin{align*}
-\sum_{e \in \cE_h^i}\int_e\sjump{\nabla u_h^{(1)}}\{\{G-G_c\}\}~ds &\leq \Bigg|\sum_{e \in \cE_h^i}\int_e\sjump{\nabla u_h^{(1)}}\{\{G-G_c\}\}~ds \Bigg|, \\& \leq \sum_{e \in \cE_h^i} \big|\big|h_e \sjump{\nabla u_h^{(1)}} \big|\big|_{L_{\infty}(e)} h_e^{-1} \big|\big|\{\{G-G_c\}\}\big|\big|_{L_{1}(e)}, \\ & \leq \eta_2  \sum_{e \in \cE_h^i}h_e^{-1} \big|\big|\{\{G-G_c\}\}\big|\big|_{L_{1}(e)}.
\end{align*} 
 A use of trace inequality \eqref{trace} yields
\begin{align*}
-\sum_{e \in \cE_h^i}\int_e\sjump{\nabla u_h^{(1)}}\{\{G-G_c\}\}~ds& \leq \eta_2 \left( \sum_{T \in \cT_h} \Big[h^{-2}_T \big|\big|G-G_c\big|\big|_{L_{1}(T)}+ h^{-1}_T \big|G-G_c\big|_{{1,1,T}} \Big] \right), \\& \leq \eta_2 \left( \sum_{T \in \cT_h \cap \cT_1} h^{-1}_T\big|G\big|_{1,1,T} + \sum_{T \in \cT_h \setminus \cT_1} \big|G\big|_{2,1,T}  \right).
\end{align*}
\item[c)] From equation \eqref{right}, we have \begin{align*}
b^*_h(u_h^{(1)},G-G_c)=- \sum_{e \in \cE_h} \int_e \{\{\nabla \Pi_h(u_h^{(1)}) \}\} [[G-G_c]]& ds -~ \theta \sum_{e \in \cE_h} \int_e \{\{\nabla \Pi_h(G-G_c)\}\} [[u_h^{(1)}]] ds  \\ & -\sum_{e \in \cE_h} \int_e \frac{\eta}{h_e}[[u_h^{(1)}]][[G-G_c]]~ds. \end{align*}
\noindent
We have $\Pi_h(G-G_c)=0$ and $\Pi_h(w)=w~~ \forall w \in V_h$ (remark \ref{proj}), hence using these fact, we have
\begin{align*} -b^*_h(u_h^{(1)},G-G_c) -& \sum_{e \in \cE_h} \int_e \{\{\nabla u_h^{(1)}\}\} [[G-G_c]]~ds
\leq \Bigg| \sum_{e \in \cE_h} \int_e \frac{\eta}{h_e}[[u_h^{(1)}]][[G-G_c]]~ds
\Bigg|,  \\ & \hspace{-1cm}\lesssim ||\sjump{u_h^{(1)}}||_{L_\infty({\cE_h})}
\sum_{e \in \cE_h} h_e^{-1} ||[[G-G_c]]||_{L_{1}(e)}, \\
& \hspace{-1cm}\lesssim ||\sjump{u_h^{(1)}}||_{L_\infty({\cE_h})} \Bigg( \sum_{T \in \cT_h} h^{-2}_T ||G-G_c||_{L_{1}(T)} + h^{-1}_T ||G-G_c||_{1,1,T} \Bigg), \hspace{0.3cm} (\text{using}~ \eqref{trace})  \\
&\hspace{-1cm}\lesssim \eta_3 \left( \sum_{T \in \cT_h \cap \cT_1} h_T^{-1} |G|_{1,1,T} + \sum_{T \in \cT_h \setminus \cT_1} |G|_{2,1,T} \right).
\end{align*}
\item[d)] We have,
\begin{align*}
-a_h(\tilde{u}_h^{(1)}-u_h^{(1)},G) & \leq \sum_{T \in \cT_h} \left| \int_T (\nabla \tilde{u}_h^{(1)}-u_h^{(1)}) \cdot \nabla G ~dx\right|, \\& \leq \sum_{T \in \cT_h} \big|\tilde{u}_h^{(1)}-u_h^{(1)}\big|_{1,\infty,T} \big|G\big|_{1,1,T},\quad \text{(using H\"older's inequality)} \\& \lesssim \eta_3 \sum_{T \in \cT_h} h_T^{-1}\big| G\big|_{1,1,T}.\quad \text{(using equation \eqref{ppp})} 
\end{align*}
\item[e)] Using $[[G]]=0$ on  $e \in \cE_h^i$, equations \eqref{right} and \eqref{trace}, inverse inequalitites (Lemma \ref{inverse}) and Lemma \ref{approx}, we obtain \begin{align*}
b^*_h(u_h^{(1)},G) & = \sum_{e \in \cE_h} \int_e \{\{\nabla G_c\}\} [[u_h^{(1)}]] ~ds \\ & \leq ||\sjump{u_h^{(1)}}||_{L_\infty({\cE_h})} \sum_{e \in \cE_h} ||\nabla G_c||_{L_{1}(e)}  \\ & \lesssim \eta_3 \left( \sum_{T \in \cT_h} h^{-1}_T |G_c|_{1,1,T}+ \sum_{T \in \cT_h} |G_c|_{2,1,T}\right)\hspace{0.3cm} \\ & = \eta_3 \left( \sum_{T \in \cT_h} h^{-1}_T |G_c|_{1,1,T}+ \sum_{T \in \cT_h \setminus \cT_1} |G_c|_{2,1,T} +\sum_{T \in \cT_h \cap \cT_1} h_T^{-1} |G_c|_{1,1,T} \right) \\ & \leq \eta_3 \left( \sum_{T \in \cT_h} h^{-1}_T |G|_{1,1,T}+ \sum_{T \in \cT_h \setminus \cT_1} |G|_{2,1,T}\right)
\end{align*}
\end{enumerate}

\noindent
Finally, the following bound holds
\begin{align} \label{Ghb1}
\langle G_h^{(1)}, G \rangle &\lesssim  (\eta_1 + \eta_2 + \eta_3) \left(\sum_{T \in \cT_h \cap \cT_1} h^{-1}_T\big|G\big|_{1,1,T} + \sum_{T \in \cT_h \setminus \cT_1} \big|G\big|_{2,1,T} \right) \notag \\ & \hspace{0.3cm}+ \eta_3 \sum_{T \in \cT_h} h^{-1}_T |G|_{1,1,T}.
\end{align}
Next, we bound the term $\langle G_h^{(t)}, G \rangle $ for $t=2$.
\subsubsection{Case-II (t=2)} \label{ssec2}
\begin{align*}
\langle G_h^{(2)}, G \rangle &= a_h(u-\tilde{u}_h^{(2)},G)+\langle \tilde{\sigma}-B_h \sigma_h^{(2)}, G \rangle \\ & = a_h(u-\tilde{u}_h^{(2)},G)+\langle \tilde{\sigma}- Q_h \sigma_h^{(2)}, G \rangle  \hspace{1cm}\quad (\text{from definition of}~B_h \sigma_h^{(2)})  \\ &= a(u, G) + \langle \tilde{\sigma}, G \rangle - a_h(\tilde{u}_h^{(2)},G)- \langle Q_h \sigma_h^{(2)}, G \rangle \\ &= (f, G) - a_h(\tilde{u}_h^{(2)},G)- \langle \sigma_h^{(2)}, G \rangle +  (\sigma_h^{(2)}- Q_h \sigma_h^{(2)}, G ) \hspace{0.2cm}\quad (\text{from remark }(\ref{extt})) \\ & = (f, G- G_c) + (f, G_c) -a_h(u_h^{(2)},G-G_c) -a_h(u_h^{(2)},G_c) \\ & \hspace{0.5cm} -a_h(\tilde{u}_h^{(2)}-u_h^{(2)},G) - \langle \sigma_h^{(2)}, G-G_c \rangle - \langle \sigma_h^{(2)}, G_c \rangle +( \sigma_h^{(2)}- Q_h \sigma_h^{(2)}, G) \\ & = \left((f, G- G_c)-a_h(u_h^{(2)},G-G_c)- \langle \sigma_h^{(2)}, G-G_c \rangle \right) + (f, G_c)\\ & \hspace{0.5cm} -a_h(u_h^{(2)},G_c)- \langle \sigma_h^{(2)}, G_c \rangle-a_h(\tilde{u}_h^{(2)}-u_h^{(2)},G) +( \sigma_h^{(2)}- Q_h \sigma_h^{(2)}, G) \\ &= \Big( (f+\Delta_h u_h^{(2)}-\sigma_h^{(2)}, G-G_c) -\sum_{e \in \cE_h^i}\int_e\sjump{\nabla u_h^{(2)}}\{\{G-G_c\}\}~ds \Big) \\ & \hspace{0.5cm} + (f,G_c) - \mathcal{A}^*_h(u_h^{(2)},G_c)- \langle \sigma_h^{(2)}, G_c \rangle +b_h^*(u_h^{(2)},G_c) -a_h(\tilde{u}_h^{(2)}-u_h^{(2)},G)\\ & \hspace{0.5cm} + ( \sigma_h^{(2)}- Q_h \sigma_h^{(2)}, G ) - \sum_{e \in \cE_h} \int_e \{\{\nabla u_h^{(2)}\}\} [[G-G_c]]~ds
\end{align*}
Note that $ \mathcal{A}^*_h(w,v)=\mathcal{A}_h(w,v) \quad \forall w, v \in V_h$ and for any $w \in V_h$, we have $\langle \sigma_h^{(2)}, w \rangle_h= (f,w)- \mathcal{A}_h(u_h^{(2)},w)$, finally we include these arguments to obtain
\begin{align} \label{eq2}
\langle G_h^{(2)}, G \rangle &= \Bigg( \sum_{T \in \cT_h} \int_T(f+\Delta_h u_h^{(2)}-\sigma_h^{(2)})( G-G_c)~ dx -\sum_{e \in \cE_h^i}\int_e\sjump{\nabla u_h^{(2)}}\{\{G-G_c\}\}~ds \Bigg) \nonumber \\ & \hspace{0.5cm} +\langle \sigma_h^{(2)}, G_c \rangle_h -\langle \sigma_h^{(2)}, G_c \rangle +b^*_h(u_h^{(2)},G)-a_h(\tilde{u}_h^{(2)}-u_h^{(2)},G) \nonumber\\ & \hspace{0.5cm} + ( \sigma_h^{(2)}- Q_h \sigma_h^{(2)}, G - Q_h G )  - \sum_{e \in \cE_h} \int_e \{\{\nabla u_h^{(2)}\}\} [[G-G_c]]~ds-b^*_h(u_h^{(2)},G-G_c) \nonumber\\ &= \Bigg( \sum_{T \in \cT_h} \int_T(f+\Delta_h u_h^{(2)}-\sigma_h^{(2)})( G-G_c)~ dx -\sum_{e \in \cE_h^i}\int_e\sjump{\nabla u_h^{(2)}}\{\{G-G_c\}\}~ds \Bigg) \nonumber \\ & \hspace{0.8cm}+
\underbrace{(\sigma_h^{(2)}- Q_h \sigma_h^{(2)} , \tilde{\Pi}_h^{-1}G_c-G_c)}_\text{Term 1} +b^*_h(u_h^{(2)},G)-a_h(\tilde{u}_h^{(2)}-u_h^{(2)},G) \nonumber\\ & \hspace{0.8cm} + \underbrace{ ( \sigma_h^{(2)}- Q_h \sigma_h^{(2)}, G - Q_h G )}_\text{Term 2}- \sum_{e \in \cE_h} \int_e \{\{\nabla u_h^{(2)}\}\} [[G-G_c]]~ds-b^*_h(u_h^{(2)},G-G_c).
\end{align}

 \noindent
The last inequality follows from the following fact $\langle \sigma_h^{(2)}, G_c \rangle_h= (\sigma_h^{(2)}, \tilde{\Pi}_h^{-1}G_c )$ (from the definition of $\langle \cdot, \cdot \rangle_h$) and $( Q_h \sigma_h^{(2)} , \tilde{\Pi}_h^{-1}G_c-G_c)=0.$ We note that all the terms are similar in equations \eqref{eq1} and \eqref{eq2} except for Term 1 and Term 2. Thereby, we need to bound only these two terms as the bound on the other terms will follow as in Case-I. In view of  equations \eqref{key} and \eqref{4.16}, we have
\begin{enumerate}
	\item[a)] \begin{align*}
(\sigma_h^{(2)}- Q_h \sigma_h^{(2)} ,\tilde{\Pi}_h^{-1}G_c-G_c) &\leq \sum_{T \in \cT_h} \big|\big|\sigma_h^{(2)}- Q_h \sigma_h^{(2)}\big|\big|_{L_{\infty}(T)} \big|\big|\tilde{\Pi}_h^{-1}G_c-G_c\big|\big|_{L_{1}(T)}, \\ & \lesssim \sum_{T \in \cT_h} h^2_T \big|\big|\sigma_h^{(2)}- Q_h \sigma_h^{(2)}\big|\big|_{L_{\infty}(T)} h^{-1}_T \big|G\big|_{1,1,T}, \\& \lesssim \eta_4  \sum_{T \in \cT_h} h^{-1}_T \big|G\big|_{1,1,T}.
	\end{align*}
	\item[b)] Using Lemma \ref{approximation}, we bound
\end{enumerate} \begin{align*}
\langle \sigma_h^{(2)}- Q_h \sigma_h^{(2)}, G - Q_h G \rangle \leq  \eta_4 \sum_{T \in \cT_h} h^{-1}_T \big|G\big|_{1,1,T}.
	\end{align*}
	Combining, we find
	\begin{align} \label{Ghb2}
\langle G_h^{(2)}, G \rangle &\lesssim  (\eta_1 + \eta_2 + \eta_3) \left(\sum_{T \in \cT_h \cap \cT_1} h^{-1}_T\big|G\big|_{1,1,T} + \sum_{T \in \cT_h \setminus \cT_1} \big|G\big|_{2,1,T} \right) \notag \\ & \hspace{0.3cm}+ (\eta_3+\eta_4) \sum_{T \in \cT_h} h^{-1}_T |G|_{1,1,T}.
\end{align}
Finally, we combine equations \eqref{eq1}, \eqref{eq2}, \eqref{Ghb1} and \eqref{Ghb2} to get the bound of the term $\langle G_h^{(t)}, G \rangle$ for both cases $t=1$ and $t=2$. In view of \eqref{winf}, we have
 \begin{align*}
 ||w^{(t)}||_{L_{\infty}(\Omega)} & =\langle G_h^{(t)}, G \rangle \\ & \lesssim \Bigg( \sum_{T \in \cT_h} \int_T(f+\Delta_h u_h^{(t)}-\sigma_h^{(t)})( G-G_c)~ dx -\sum_{e \in \cE_h^i}\int_e\sjump{\nabla u_h^{(t)}}\{\{G-G_c\}\}~ds \Bigg) \\ & \hspace{0.5cm}+ (t-1)((\sigma_h^{(t)}- B_h \sigma_h^{(t)} , \tilde{\Pi}^{-1}_hG_c-G_c)) +b^*_h(u_h^{(t)},G)-a_h(\tilde{u}_h^{(t)}-u_h^{(t)},G)\\ &  \hspace{0.5cm}+ (t-1)( ( \sigma_h^{(t)}- B_h \sigma_h^{(t)}, G - Q_h G ))\\ & \hspace{0.5cm}- \sum_{e \in \cE_h} \int_e \{\{\nabla u_h^{(t)}\}\} [[G-G_c]]~ds-b^*_h(u_h^{(t)},G-G_c), \\ &  \lesssim (\eta_1 + \eta_2) \left( \sum_{T \in \cT_h \cap \cT_1} h^{-1}_T|G|_{1,1,T} + \sum_{T \in \cT_h \setminus \cT_1} |G|_{2,1,T}  \right)\\ & \hspace{1cm}+ (\eta_3 +(t-1) \eta_4)  \sum_{T \in \cT_h} h^{-1}_T |G|_{1,1,T}.
\end{align*}
Using the bound on $||w^{(t)}||_{L_{\infty}(\Omega)} $ and \eqref{theorem}, the following bound on $||u-u_h^{(t)}||_{L_{\infty}(\Omega)}$ follows
\begin{align} \label{errr}
||u-u_h^{(t)}||_{L_{\infty}(\Omega)} & \lesssim \Delta^* \left( \sum_{T \in \cT_h \cap \cT_1} h^{-1}_T|G|_{1,1,T} + \sum_{T \in \cT_h \setminus \cT_1} |G|_{2,1,T} +\sum_{T \in \cT_h \setminus \cT_1} h^{-1}_T |G|_{1,1,T} \right) \nonumber \\ &\hspace{0.7cm} + \|(\chi-u_h^{(t)})^{+}\|_{L_{\infty}(\Omega)}+
\|(u_h^{(t)}-\chi)^+\|_{L_{\infty}\{B_h \sigma_h^{(t)}<0\}} + \eta_3,
\end{align}
where $\Delta^*=\eta_1 + \eta_2+\eta_3 + (t-1)\eta_4$. Let $p = \frac{d}{d-1}$ and $q=d$, using H\"older's inequality we have \begin{align} \label{er}
	\sum_{T \in \cT_h \setminus \cT_1} h^{-1}_T |G|_{1,1,T} \leq \sum_{T \in \cT_h \setminus \cT_1} |G|_{1,p,T}.
	\end{align}
Therefore, in view of equation (\ref{errr}) and (\ref{er}), we find
\begin{align} \label{rel}
||u-u_h^{(t)}||_{L_{\infty}(\Omega)} & \lesssim \Delta^* \left( \sum_{T \in \cT_h \cap \cT_1} h^{-1}_T|G|_{1,1,T} + \sum_{T \in \cT_h \setminus \cT_1} |G|_{2,1,T} +\sum_{T \in \cT_h \setminus \cT_1} |G|_{1,p,T} \right) \nonumber \\ &\hspace{0.7cm} + \|(\chi-u_h^{(t)})^{+}\|_{L_{\infty}(\Omega)}+
\|(u_h^{(t)}-\chi)^+\|_{L_{\infty}\{B_h \sigma_h^{(t)}<0\}}+  \eta_3.
\end{align} 
Finally, we have the following reliability estimate.
\begin{theorem} \label{main}
	Let $\Omega \subset \mathbb{R}^d$ and $u \in \cK$ solves the inequality \eqref{SSSS}. For $t=1,2$, let $u_h^{(t)} \in \cK_h^{(t)}$ satisfy \eqref{DDD} and \eqref{DDDD}, respectively. Then,
	\begin{align*}
	||u-u_h^{(t)}||_{L_{\infty}(\Omega)} & \lesssim \eta_h,
	\end{align*}
	where $\eta_h=|\mbox{log}(h_{min})|\big(\eta_1 + \eta_2+\eta_3 + (t-1)\eta_4\big) + \|(\chi-u_h^{(t)})^{+}\|_{L_{\infty}(\Omega)}+
	\|(u_h^{(t)}-\chi)^+\|_{L_{\infty}\{B_h \sigma_h^{(t)}<0\}}$.
\end{theorem}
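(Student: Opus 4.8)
The statement collects everything that has been established in Section~\ref{sec5}, so the plan is to assemble the pieces rather than introduce new machinery. The starting point is the triangle-inequality reduction in Lemma~\ref{reliability}, which bounds $\|u-u_h^{(t)}\|_{L_{\infty}(\Omega)}$ by $\|w^{(t)}\|_{L_{\infty}(\Omega)}$ plus the three data/obstacle terms $\|(\chi-u_h^{(t)})^{+}\|_{L_{\infty}(\Omega)}$, $\|(u_h^{(t)}-\chi)^+\|_{L_{\infty}\{B_h \sigma_h^{(t)}<0\}}$ and $\eta_3$. Thus the only quantity that still needs to be controlled by the estimators is $\|w^{(t)}\|_{L_{\infty}(\Omega)}$, and the work done in \S\ref{ssec1}--\S\ref{ssec2} already produces the representation \eqref{winf}, $\|w^{(t)}\|_{L_{\infty}(\Omega)} = \langle G_h^{(t)}, G\rangle$, together with the term-by-term bound culminating in
\begin{align*}
\|w^{(t)}\|_{L_{\infty}(\Omega)} \lesssim (\eta_1+\eta_2)\Bigl(\sum_{T\in\cT_h\cap\cT_1} h_T^{-1}|G|_{1,1,T} + \sum_{T\in\cT_h\setminus\cT_1}|G|_{2,1,T}\Bigr) + (\eta_3+(t-1)\eta_4)\sum_{T\in\cT_h} h_T^{-1}|G|_{1,1,T}.
\end{align*}
Combining this with Lemma~\ref{reliability} gives \eqref{rel}, so it only remains to estimate the three Green's-function sums by an absolute constant times $|\log(h_{\min})|$.

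For that final estimate I would invoke Lemma~\ref{Grn} with a dyadic covering of $\Omega$ by the annuli $B(y^*,2^{j+1})\setminus B(y^*,2^j)$, where $y^*$ is the point of \eqref{winf}. On the innermost patch $\cT_1$ (elements near $T^*\ni y^*$, whose diameter is comparable to $h_{\min}$ after several refinements near the singularity), one uses $|G(y^*,\cdot)|_{1,1,(\Omega\cap B(y^*,R))}\lesssim R$ with $R\sim h_{\min}$ and the fact that $\cT_1$ contains $O(1)$ elements, so $\sum_{T\in\cT_h\cap\cT_1} h_T^{-1}|G|_{1,1,T}\lesssim 1$. For the outer region one splits the sum over the dyadic shells: on the shell of radius $R_j=2^j$ the bound $|G(y^*,\cdot)|_{2,1,(\Omega\setminus B(y^*,R_j))}\lesssim \log(2+R_j^{-1})$ from Lemma~\ref{Grn} telescopes, and because the number of shells is $O(|\log h_{\min}|)$ one obtains $\sum_{T\in\cT_h\setminus\cT_1}|G|_{2,1,T}\lesssim |\log(h_{\min})|$; the same dyadic argument applied to $|G(y^*,\cdot)|_{1,\frac{d}{d-1},(\Omega\setminus B(y^*,R))}\lesssim\log(2+R^{-1})$ handles $\sum_{T\in\cT_h\setminus\cT_1}|G|_{1,p,T}$ with $p=\frac{d}{d-1}$ after the H\"older step \eqref{er}. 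Substituting these three bounds into \eqref{rel} collapses the bracket to $O(|\log(h_{\min})|)$ and yields exactly $\eta_h$ as stated.

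The main obstacle is the Green's-function bookkeeping in the last step: one must be careful that the decomposition $\cT_h = (\cT_h\cap\cT_1)\cup(\cT_h\setminus\cT_1)$ matches the radii at which Lemma~\ref{Grn} is applied, that the logarithmic powers $\alpha_d$ do not accumulate across the $O(|\log h_{\min}|)$ shells, and that the $h_T^{-1}$ weights on the near-field sums are absorbed by the linear factor $R$ in $|G|_{1,1,(\Omega\cap B(y^*,R))}$ rather than producing an extra negative power of $h_{\min}$. Everything else---the reduction through Lemmas~\ref{reliability} and \ref{lem:barrier}, the sign property $B_h\sigma_h^{(t)}\le 0$ from Remark~\ref{sig}, and the term-by-term estimates (a)--(e) and (a)--(b) of the two cases---has already been carried out in the excerpt, so the proof of the theorem is essentially the sentence ``combine \eqref{rel} with Lemma~\ref{Grn} via a dyadic decomposition about $y^*$,'' expanded to make the logarithmic factor explicit.
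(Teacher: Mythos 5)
Your overall architecture coincides with the paper's: reduce via Lemma~\ref{reliability} to bounding $\|w^{(t)}\|_{L_{\infty}(\Omega)}$, use the representation \eqref{winf} and the term-by-term estimates of \S\ref{ssec1}--\S\ref{ssec2} to reach \eqref{errr}--\eqref{rel}, and finish by estimating the three Green's-function sums. The only place where you genuinely deviate is that final step, and there your argument, as written, does not quite close. The paper does not use a dyadic decomposition at all: by shape regularity it fixes balls $B_1=B(y^*,A_1h_1)\subset\cT_1$ and $B_2=B(y^*,A_2h_1)\supset\cT_2$ with $h_1$ the local mesh size at $y^*$, and then applies Lemma~\ref{Grn} \emph{once}, using $|G|_{1,1,(\Omega\cap B_2)}\lesssim A_2h_1$ against the weight $h_T^{-1}\sim h_1^{-1}$ for the near-field sum, and the exterior bounds $|G|_{2,1,(\Omega\setminus B_1)}\lesssim\log(2+(A_1h_1)^{-1})$ and $|G|_{1,\frac{d}{d-1},(\Omega\setminus B_1)}\lesssim\log(2+(A_1h_1)^{-1})$ for the two far-field sums, which immediately gives the single factor $|\log(h_{\min})|$.

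Your dyadic-shell version has a bookkeeping flaw: Lemma~\ref{Grn} provides bounds on the \emph{exterior of a ball}, not per-annulus bounds, so if you estimate the contribution of each shell $B(y^*,R_{j+1})\setminus B(y^*,R_j)$ by the exterior bound $\log(2+R_j^{-1})$ and then sum over the $O(|\log h_{\min}|)$ shells, you obtain $\sum_j\log(2+R_j^{-1})\sim|\log h_{\min}|^2$, not $|\log h_{\min}|$; the word ``telescopes'' does not rescue this, since the seminorms over disjoint annuli add rather than cancel. To make the shell argument work you would need genuine per-annulus estimates of size $O(1)$ (true for the Green's function, but not stated in the paper), whereas the whole issue evaporates if you simply bound each far-field sum by the corresponding seminorm over $\Omega\setminus B_1$ and invoke Lemma~\ref{Grn} a single time with $R\sim h_1$, exactly as the paper does. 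With that one-line replacement your proof matches the paper's.
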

\begin{proof}
	Due to the shape regularity assumption, we have the existence of the constants $A_1, A_2 >0$ with $A_2>A_1$ such that
	\begin{align*}
	B_1:=B(y^*, A_1h_1) &\subset \cT_1, \\B_2:=B(y^*, A_2h_1) &\supset \cT_2.
	\end{align*} where $B(x,r)$ is the ball centered at $x$ with radius $r$. Using the regularity estimates from Lemma \ref{Grn}, we have the following estimates
	\begin{align*}
	 \sum_{T \in \cT_h \cap \cT_1} h^{-1}_T|G|_{1,1,T} \lesssim  \sum_{T \in \cT_h \cap \cT_2} h^{-1}_T|G|_{1,1,T} \lesssim  y_1^{-1} |G|_{1,1,(\Omega \cap B_2)} \lesssim 1,
	\end{align*}
	\begin{align*}
	\sum_{T \in \cT_h \setminus \cT_1} |G|_{2,1,T} \lesssim |G|_{2,1,(\Omega \setminus B_1)} \lesssim |log(h_1)|,
	\end{align*}
	\begin{align*}
	\sum_{T \in \cT_h \setminus \cT_1}  |G|_{1,p,T} \lesssim |G|_{1,p,(\Omega \setminus B_1)} \lesssim |log(h_1)|.
	\end{align*}
	In the view of equation \eqref{rel} and combining the estimates on the Green's function, we get the desired result.
\end{proof}

\subsection{Upper Bound II} We derive pointwise estimates corresponding to the error in  Lagrange multipliers $\sigma$ and $\sigma_h^{(t)}$ for $t=1,2$. Let $A \subset \Omega$ be any given subset, we denote
\begin{align*}
\tau_A:=\{T \in \cT_h~|~T \subset A\}
\end{align*} to be the patch around the set $A$. Let us define the functional space
\begin{align}
\mathcal{H}_A:= W^{2,1}_0(A):= \{v \in W^{2,1}(A)~:~v=\nabla v \cdot n =0~ \text{on}~ \partial A\},
\end{align} together with the norm $||v||_{\mathcal{H}_A}:= |v|_{1,1,A}+ |v|_{2,1,A}.$ We set $\mathcal{H}:=\mathcal{H}_{\Omega}= W^{2,1}_0(\Omega).$ For any $\mathcal{L} \in \mathcal{H}^*_A$, we define the operator norm $||\mathcal{L}||_{-2,\infty,A}$ as
\begin{align} \label{opnorm}
||\mathcal{L}||_{-2,\infty,A}:= \sup \{\langle \mathcal{L}, v \rangle~:~ v \in \mathcal{H}_A~,~||v||_{\mathcal{H}_A} \leq 1\}.
\end{align}
In the next two lemmas, we collect some bounds which will play a key role in proving the main result of this subsection.
{
\begin{lemma} \label{5.7}
For $t=1,2$, let $u_h^{(t)} \in \cK^{(t)}_h$ be the approximate solution of $u$ and assume $w \in \mathcal{H}$, then
	\begin{align}
b^*_h(u_h^{(t)},w) \lesssim \eta_3 ||w||_{\mathcal{H}}.
	\end{align}
\end{lemma}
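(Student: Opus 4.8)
The plan is to reduce $b_h^*(u_h^{(t)},w)$ to the one interface term that does not vanish, split that term into an interpolation part and a ``consistent'' part, and treat the consistent part by an elementwise integration by parts that exploits the extra regularity of $w$.

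First, since $w\in\mathcal H=W^{2,1}_0(\Omega)$ the traces of $w$ from both sides of every interior edge agree and $w|_{\partial\Omega}=0$, so $\sjump{w}=0$ on all of $\cE_h$; likewise $\nabla w\in W^{1,1}(\Omega)$ gives $\sjump{\nabla w}=0$ on every interior edge and $\nabla w\cdot n=0$ on $\partial\Omega$. Because $\Pi_h$ reproduces discrete functions (Remark~\ref{proj}), the first and third sums in the definition \eqref{right} of $b_h^*$ vanish and we are left with
$$
b_h^*(u_h^{(t)},w)=-\,\theta\sum_{e\in\cE_h}\int_e\mean{\nabla w_c}\cdot\sjump{u_h^{(t)}}\,ds,\qquad w_c=\Pi_h w,\ |\theta|\le1
$$
(in particular the estimate is trivial for IIPG, $\theta=0$). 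Using $\sjump{\nabla w}=0$ I would write $\mean{\nabla w_c}=\mean{\nabla(w_c-w)}+\nabla w$ on each edge. For the first summand I bound $\sjump{u_h^{(t)}}$ pointwise by $\eta_3$, pass from the edge to the adjoining element by the trace inequality (Lemma~\ref{trace} with $p=1$), and invoke the stability and approximation bounds \eqref{DR1}--\eqref{DR2} of $\Pi_h$ with $s=2$, which gives
$$
\Big|\sum_e\int_e\mean{\nabla(w_c-w)}\cdot\sjump{u_h^{(t)}}\Big|
\lesssim\eta_3\sum_{T\in\cT_h}\Big(h_T^{-1}|w-\Pi_h w|_{1,1,T}+|w-\Pi_h w|_{2,1,T}\Big)\lesssim\eta_3\,|w|_{2,1,\Omega}.
$$

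The remaining term $\sum_e\int_e\nabla w\cdot\sjump{u_h^{(t)}}$ is the heart of the matter. Here I would use that $E_h u_h^{(t)}\in V_c\subset H^1_0(\Omega)$ is single-valued, so $\sjump{u_h^{(t)}}=\sjump{u_h^{(t)}-E_h u_h^{(t)}}$, and then integrate by parts on each element; thanks to $\sjump{\nabla w}=0$ and the homogeneous boundary data of $w$ all the interface and boundary contributions of $\nabla w$ collapse and
$$
\sum_e\int_e\nabla w\cdot\sjump{u_h^{(t)}}=\int_\Omega\nabla_h\big(u_h^{(t)}-E_h u_h^{(t)}\big)\cdot\nabla w\,dx+\int_\Omega\big(u_h^{(t)}-E_h u_h^{(t)}\big)\Delta w\,dx.
$$
The second integral is controlled by $\|u_h^{(t)}-E_h u_h^{(t)}\|_{L_\infty(\Omega)}\,\|\Delta w\|_{L_1(\Omega)}\lesssim\eta_3\,|w|_{2,1,\Omega}$ via \eqref{pp}. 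For the first I split once more, $\nabla w=\nabla(w-\Pi_h w)+\nabla\Pi_h w$: on the $w-\Pi_h w$ part the factor $h_T^{-1}$ in $\|\nabla(u_h^{(t)}-E_h u_h^{(t)})\|_{L_\infty(T)}\lesssim h_T^{-1}\eta_3$ from \eqref{ppp} is absorbed by $\|\nabla(w-\Pi_h w)\|_{L_1(T)}\lesssim h_T|w|_{2,1,T^*}$ from \eqref{DR2}, and on the $\Pi_h w$ part a further elementwise integration by parts — using that $\Delta_h(u_h^{(t)}-E_h u_h^{(t)})$ is piecewise constant, that $\|\nabla^2\Pi_h w\|_{L_1(T)}\lesssim|w|_{2,1,T^*}$ by \eqref{DR1}, and that $\sjump{\nabla\Pi_h w}=\sjump{\nabla(\Pi_h w-w)}$ is again of size $|w|_{2,1,\cdot}$ on each edge — reduces everything to quantities already bounded by $\eta_3\|w\|_{\mathcal H}$.

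The main obstacle is precisely this last term: a direct H\"older estimate of $\int_\Omega\nabla_h(u_h^{(t)}-E_h u_h^{(t)})\cdot\nabla w$ would leave a spurious power of $h_{\min}^{-1}$, and removing it forces one to use the $W^{2,1}$-regularity of $w$ (so that the interface jumps of $\nabla w$, and of $\nabla\Pi_h w$, are superconvergent) together with the superapproximation properties of $\Pi_h$ from Lemma~\ref{approx}, rather than any termwise bound. Collecting the three contributions then yields $b_h^*(u_h^{(t)},w)\lesssim\eta_3\,\|w\|_{\mathcal H}$.
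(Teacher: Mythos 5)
The reduction of $b_h^*(u_h^{(t)},w)$ to $-\theta\sum_{e\in\cE_h}\int_e\mean{\nabla w_c}\cdot\sjump{u_h^{(t)}}\,ds$ (the other two sums vanish because $\sjump{w}=0$, not because $\Pi_h$ reproduces discrete functions), the splitting $\mean{\nabla w_c}=\mean{\nabla(w_c-w)}+\nabla w$, and your bounds for the interpolation part, for $\int_\Omega(u_h^{(t)}-E_hu_h^{(t)})\Delta w\,dx$ and for $\int_\Omega\nabla_h(u_h^{(t)}-E_hu_h^{(t)})\cdot\nabla(w-\Pi_hw)\,dx$ are all sound. The genuine gap is the last step, i.e.\ the term $P:=\int_\Omega\nabla_h(u_h^{(t)}-E_hu_h^{(t)})\cdot\nabla_h\Pi_hw\,dx$. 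If you integrate by parts elementwise so that the volume term becomes $-\sum_T\int_T(u_h^{(t)}-E_hu_h^{(t)})\Delta\Pi_hw\,dx$, the boundary contributions recombine into $\sum_{e\in\cE_h}\int_e\sjump{u_h^{(t)}-E_hu_h^{(t)}}\cdot\mean{\nabla\Pi_hw}\,ds+\sum_{e\in\cE_h^i}\int_e\smean{u_h^{(t)}-E_hu_h^{(t)}}\,\sjump{\nabla\Pi_hw}\,ds$; since $\sjump{E_hu_h^{(t)}}=0$, the first of these sums is exactly $-\theta^{-1}b_h^*(u_h^{(t)},w)$, the quantity you set out to estimate. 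Your sketch handles only the jump $\sjump{\nabla\Pi_hw}$, not this mean term, and because every step in your chain is an identity the unknown term reappears with coefficient one and cancels out, so the argument is circular and produces no bound. Integrating by parts the other way (moving the Laplacian onto $u_h^{(t)}-E_hu_h^{(t)}$, which is where the remark that $\Delta_h(u_h^{(t)}-E_hu_h^{(t)})$ is piecewise constant would enter) does not close either: the volume term is then controlled only by $\sum_T\|\Delta(u_h^{(t)}-E_hu_h^{(t)})\|_{L_\infty(T)}\|\Pi_hw\|_{L_1(T)}\lesssim\eta_3\sum_Th_T^{-2}\|w\|_{L_1(T^*)}$, and $\|w\|_{\mathcal{H}}=|w|_{1,1,\Omega}+|w|_{2,1,\Omega}$ does not dominate such $h_T^{-2}$-weighted $L_1$ norms.

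For comparison, the paper's proof is entirely direct and never uses $E_h$ or integration by parts: it pulls out $\|\sjump{u_h^{(t)}}\|_{L_\infty(\cE_h)}=\eta_3$, bounds $\sum_e\|\nabla w_c\|_{L_1(e)}$ by the trace inequality (Lemma \ref{trace} with $p=1$) and the stability estimate \eqref{DR1} of $\Pi_h$ to get $\eta_3\sum_T\bigl(h_T^{-1}|w|_{1,1,T}+|w|_{2,1,T}\bigr)$, and then removes the factor $h_T^{-1}$ by the elementwise H\"older inequality $h_T^{-1}|w|_{1,1,T}\lesssim|w|_{1,\frac{d}{d-1},T}$ followed by the embedding of Lemma \ref{poincare}. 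Your instinct that a naive bound leaves an $h_{\min}^{-1}$ is precisely what this H\"older-plus-Sobolev step is meant to address; if you distrust that step, the remedy must still be a bound for $P$ (equivalently for the face sums $\sum_e\|\nabla\Pi_hw\|_{L_1(e)}$) in terms of $\|w\|_{\mathcal{H}}$ alone, not a manipulation that reintroduces $b_h^*(u_h^{(t)},w)$ itself.
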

\begin{proof} We have,
	\begin{align*}
	b^*_h(u_h^{(t)},w) & = \sum_{e \in \cE_h} \int_e \{\{\nabla w_c\}\} [[u_h^{(t)}]] \quad \text{(using [[w]]=0)} \\ & \leq ||\sjump{u_h^{(t)}}||_{L_\infty({\cE_h})} \sum_{e \in \cE_h} ||\nabla w_c||_{L_{1}(e)} \quad \text{(using H\"older's inequality \cite{ciarlet2002finite})} \\ & \leq \eta_3 \left( \sum_{T \in \cT_h} h^{-1}_T |w_c|_{1,1,T}+ \sum_{T \in \cT_h} |w_c|_{2,1,T}\right)\hspace{0.3cm} (\text{using Lemma \ref{trace}})\\ & \leq \eta_3 \left( \sum_{T \in \cT_h}  h^{-1}_T |w|_{1,1,T}+ \sum_{T \in \cT_h} |w|_{2,1,T}\right) \hspace{0.2cm} (\text{using equation \eqref{DR1}}) \\ & \lesssim \eta_3 \left( \sum_{T \in \cT_h}  |w|_{1,\frac{d}{d-1},T}+ \sum_{T \in \cT_h} |w|_{2,1,T}\right) \hspace{0.2cm} (\text{using Cauchy Schwarz inequality for } q=\frac{d}{d-1}) \\ & \leq \eta_3 ||w||_{\mathcal{H}} \hspace{0.2cm} (\text{using Lemma \ref{poincare}}).
	\end{align*}
\end{proof} }
\begin{lemma} \label{5.8}
	It holds that
	\begin{align}
	a_h(\tilde{u}_h^{(t)}-u_h^{(t)},v) \lesssim \eta_3 |v|_{2,1,\Omega}
	\end{align} 
	for any $v \in  \mathcal{H}.$ 
\end{lemma}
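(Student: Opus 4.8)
The plan is to transfer one derivative from $\tilde u_h^{(t)}-u_h^{(t)}$ onto $v$ by an elementwise integration by parts, use the conformity of the enriching operator to collapse the interface contributions, and then estimate the two resulting pieces by Lemma \ref{lem:average} and by the same trace/H\"older/Poincar\'e chain already used in the proof of Lemma \ref{5.7}. Set $\phi:=\tilde u_h^{(t)}-u_h^{(t)}=E_hu_h^{(t)}-u_h^{(t)}$, which is a polynomial of degree $\le 2$ on each $T\in\cT_h$. Since $v\in\mathcal{H}=W^{2,1}_0(\Omega)$ we have $\nabla v\in W^{1,1}(\Omega)$; hence $\Delta v\in L_1(T)$ for each $T$, $\nabla v$ has a single-valued trace on every interior edge/face, and $\nabla v\cdot n=0$ on $\partial\Omega$.

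Integrating by parts on each $T$ gives
\begin{align*}
\int_T\nabla\phi\cdot\nabla v~dx=-\int_T\phi\,\Delta v~dx+\int_{\partial T}\phi\,(\nabla v\cdot n_T)~ds.
\end{align*}
Summing over $\cT_h$: the boundary edges/faces contribute nothing because $\nabla v\cdot n=0$ on $\partial\Omega$, while on an interior edge/face $e$ the two elemental contributions combine, using that $\nabla v$ is single-valued, to $\int_e\sjump{\phi}\cdot\{\{\nabla v\}\}~ds$; since $\tilde u_h^{(t)}=E_hu_h^{(t)}\in V_c$ is continuous, $\sjump{\phi}=-\sjump{u_h^{(t)}}$, so that
\begin{align*}
a_h(\tilde u_h^{(t)}-u_h^{(t)},v)=-\sum_{T\in\cT_h}\int_T\phi\,\Delta v~dx-\sum_{e\in\cE_h^i}\int_e\sjump{u_h^{(t)}}\cdot\{\{\nabla v\}\}~ds.
\end{align*}

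Now the two sums are bounded separately. For the first, $\big|\sum_{T}\int_T\phi\,\Delta v~dx\big|\le\|\phi\|_{L_\infty(\Omega_h)}\sum_{T}\|\Delta v\|_{L_1(T)}\lesssim\|\sjump{u_h^{(t)}}\|_{L_\infty(\cE_h)}\,|v|_{2,1,\Omega}=\eta_3\,|v|_{2,1,\Omega}$, using \eqref{pp} and $\sum_{T}\|\Delta v\|_{L_1(T)}\le|v|_{2,1,\Omega}$. For the second, H\"older's inequality gives $\big|\sum_{e\in\cE_h^i}\int_e\sjump{u_h^{(t)}}\cdot\{\{\nabla v\}\}~ds\big|\le\eta_3\sum_{e\in\cE_h^i}\|\nabla v\|_{L_1(e)}$; applying the trace inequality (Lemma \ref{trace}) to $\nabla v$ and then H\"older's inequality with exponent $d/(d-1)$ on each element yields
\begin{align*}
\sum_{e\in\cE_h^i}\|\nabla v\|_{L_1(e)}\lesssim\sum_{T\in\cT_h}\Big(h_T^{-1}|v|_{1,1,T}+|v|_{2,1,T}\Big)\lesssim\sum_{T\in\cT_h}\Big(|v|_{1,\frac{d}{d-1},T}+|v|_{2,1,T}\Big).
\end{align*}
Because $v=0$ and $\nabla v\cdot n=0$ on $\partial\Omega$ force $\nabla v\in W^{1,1}_0(\Omega)$, Lemma \ref{poincare} (the Sobolev embedding $W^{1,1}_0(\Omega)\hookrightarrow L_{\frac{d}{d-1}}(\Omega)$ applied componentwise to $\nabla v$) gives $|v|_{1,\frac{d}{d-1},\Omega}\lesssim|v|_{2,1,\Omega}$, so that $\sum_{e}\|\nabla v\|_{L_1(e)}\lesssim|v|_{2,1,\Omega}$ — exactly as in the last lines of the proof of Lemma \ref{5.7}. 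Adding the two estimates proves $a_h(\tilde u_h^{(t)}-u_h^{(t)},v)\lesssim\eta_3\,|v|_{2,1,\Omega}$.

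The main obstacle is the integration-by-parts step under the weak regularity of $v$: one must keep track of the facts that $v$ is only $W^{2,1}$ (so $\Delta v\in L_1$ and $\nabla v$ possesses $L_1$-traces with no better integrability), that the enrichment $\tilde u_h^{(t)}=E_hu_h^{(t)}$ is conforming so that the jump of $\phi$ on an interior face equals $-\sjump{u_h^{(t)}}$ even though $u_h^{(t)}$ itself need not vanish on $\partial\Omega$, and that the boundary faces disappear precisely because $\nabla v\cdot n=0$ there. Once the identity in paragraph two is in place, the remainder is the standard trace/H\"older/Poincar\'e bookkeeping already carried out in Lemma \ref{5.7}.
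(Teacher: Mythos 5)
Your proof is correct and takes essentially the same route as the paper: elementwise integration by parts, control of the volume term through $\|\tilde{u}_h^{(t)}-u_h^{(t)}\|_{L_\infty(\Omega)}\lesssim\eta_3$ via Lemma \ref{lem:average}, and the same trace/H\"older/Poincar\'e chain already used for Lemma \ref{5.7} to absorb the remaining face contributions into $|v|_{2,1,\Omega}$. The only difference is that you make explicit the bookkeeping the paper compresses into ``integration by parts and trace inequality,'' namely collapsing the element-boundary integrals into $-\sjump{u_h^{(t)}}$ tested against the single-valued trace $\smean{\nabla v}$ and discarding the boundary faces via $\nabla v\cdot n=0$ on $\partial\Omega$.
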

\begin{proof}
	In the view of Lemma \ref{lem:average}, integration by parts and trace inequality, we get
	\begin{align*}
	a_h(\tilde{u}_h^{(t)}-u_h^{(t)},v) &= \sum_{T \in \cT_h} \int_T \nabla(\tilde{u}_h^{(t)}-u_h^{(t)}) \cdot \nabla v~dx, \\ & \lesssim  - \sum_{T \in \cT_h} \int_T (\tilde{u}_h^{(t)}-u_h^{(t)}) \Delta v ~dx + \sum_{T \in \cT_h} \int_{\partial T} (\tilde{u}_h^{(t)}-u_h^{(t)})  \frac{\partial v}{\partial n_T} ~ds ,\\ & \lesssim ||\tilde{u}_h^{(t)}-u_h^{(t)}||_{L_{\infty}(\Omega)} |v|_{2,1,\Omega} , \\ & \lesssim \eta_3 |v|_{2,1,\Omega}.
	\end{align*}
\end{proof}
 
\noindent
The upper bound on the Galerkin functional $G_h^{(t)}$ in the dual norm defined in \eqref{opnorm} is proved in the next lemma.
\begin{lemma} \label{bGalerkin}
	For $t=1,2$, it holds that
	\begin{align}
	||G_h^{(t)}||_{-2,\infty,\Omega} \lesssim \sum_{i=1}^3 \eta_i + (t-1) \eta_4.
	\end{align}
\end{lemma}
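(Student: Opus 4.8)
The plan is to test $G_h^{(t)}$ against an arbitrary $v \in \mathcal{H} = W^{2,1}_0(\Omega)$ with $\|v\|_{\mathcal{H}} \leq 1$, and to reuse the algebraic decomposition of $\langle G_h^{(t)}, v\rangle$ that was already carried out for $\langle G_h^{(t)}, G\rangle$ in Case-I and Case-II. Concretely, I would start from the representation
\begin{align*}
\langle G_h^{(t)}, v\rangle &= \sum_{T\in\cT_h}\int_T (f+\Delta_h u_h^{(t)}-\sigma_h^{(t)})(v-v_c)~dx -\sum_{e\in\cE_h^i}\int_e \sjump{\nabla u_h^{(t)}}\smean{v-v_c}~ds \\
&\quad + b^*_h(u_h^{(t)},v) - a_h(\tilde u_h^{(t)}-u_h^{(t)},v) - \sum_{e\in\cE_h}\int_e\smean{\nabla u_h^{(t)}}\sjump{v-v_c}~ds - b^*_h(u_h^{(t)},v-v_c)
\end{align*}
(plus the two extra terms involving $\sigma_h^{(2)}-Q_h\sigma_h^{(2)}$ when $t=2$), which is exactly equations \eqref{eq1} and \eqref{eq2} with the Green's function $G$ replaced by the generic test function $v$, and with $v_c := \Pi_h v$.

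Next I would bound each term by mimicking items (a)--(e) of Subsection \ref{ssec1} and items (a)--(b) of Subsection \ref{ssec2}, but now closing the estimates against $\|v\|_{\mathcal{H}} = |v|_{1,1,\Omega} + |v|_{2,1,\Omega}$ instead of against the sharp Green's function seminorms. The residual-volume term gives $\eta_1 \sum_T h_T^{-2}\|v-v_c\|_{L_1(T)} \lesssim \eta_1 |v|_{2,1,\Omega}$ by \eqref{DR2} with $s=2$, $r=0$. The edge-jump term gives $\eta_2$ times $\sum_{e} h_e^{-1}\|\smean{v-v_c}\|_{L_1(e)}$, which after the trace inequality \eqref{trace} and \eqref{DR2} is controlled by $|v|_{1,1,\Omega} + |v|_{2,1,\Omega}$. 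For the three $b^*_h$-type and averaging terms together with $a_h(\tilde u_h^{(t)}-u_h^{(t)},v)$, I would invoke Lemma \ref{5.7} and Lemma \ref{5.8} directly, which already package these as $\lesssim \eta_3\|v\|_{\mathcal{H}}$ and $\lesssim \eta_3|v|_{2,1,\Omega}$; the only new piece is $-\sum_e\int_e\smean{\nabla u_h^{(t)}}\sjump{v-v_c}~ds - b^*_h(u_h^{(t)},v-v_c)$, which (using $\Pi_h(v-v_c)=0$ and $\Pi_h w = w$ on $V_h$, Remark \ref{proj}) collapses to the stabilization term $\sum_e \frac{\eta}{h_e}\sjump{u_h^{(t)}}\sjump{v-v_c}$, estimated by $\eta_3 \sum_e h_e^{-1}\|\sjump{v-v_c}\|_{L_1(e)} \lesssim \eta_3\|v\|_{\mathcal{H}}$ as in item (c). For $t=2$, Term~1 and Term~2 are handled exactly as in items (a)--(b) of Subsection \ref{ssec2}: $(\sigma_h^{(2)}-Q_h\sigma_h^{(2)}, \tilde\Pi_h^{-1}v_c - v_c)$ and $(\sigma_h^{(2)}-Q_h\sigma_h^{(2)}, v - Q_h v)$ are each bounded using \eqref{key}, \eqref{4.16} and Lemma \ref{approximation} by $\eta_4 \sum_T h_T^{-1}|v|_{1,1,T} \lesssim \eta_4 |v|_{1,1,\Omega} \le \eta_4\|v\|_{\mathcal{H}}$.

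Summing these contributions gives $\langle G_h^{(t)}, v\rangle \lesssim (\eta_1+\eta_2+\eta_3 + (t-1)\eta_4)\|v\|_{\mathcal{H}}$, and taking the supremum over $v$ with $\|v\|_{\mathcal{H}}\leq 1$ yields the claimed bound $\|G_h^{(t)}\|_{-2,\infty,\Omega} \lesssim \sum_{i=1}^3 \eta_i + (t-1)\eta_4$. The main obstacle, such as it is, is bookkeeping rather than a genuine difficulty: one must be careful that the local patch seminorms $|v|_{k,1,T^*}$ produced by the approximation estimates for $\Pi_h$ sum (with finite overlap, by shape regularity) to the global seminorms $|v|_{k,1,\Omega}$, and that the mixed-integrability step $\sum_T h_T^{-1}|v|_{1,1,T} \le \sum_T |v|_{1,p,T} \le |v|_{1,p,\Omega}$ with $p = d/(d-1)$ followed by Poincaré (Lemma \ref{poincare}) correctly reduces the $W^{1,p}$ seminorm to $|v|_{2,1,\Omega}$, so that everything is absorbed into $\|v\|_{\mathcal{H}}$ without any leftover negative power of $h_{\min}$ --- in contrast to Theorem \ref{main}, here there is no logarithmic factor precisely because $v$ is a fixed $W^{2,1}$ function rather than the singular Green's function.
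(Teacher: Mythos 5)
Your proposal is correct and follows essentially the same route as the paper: test $G_h^{(t)}$ against $v \in \mathcal{H}$, reuse the Case-I/Case-II decomposition with the Green's function replaced by $v$ and a discrete approximation $v_h=\Pi_h v$, bound the terms via the projection estimates \eqref{DR2}, the trace inequality, Lemmas \ref{5.7}, \ref{5.8}, and (for $t=2$) \eqref{key}, \eqref{4.16} and Lemma \ref{approximation}, then take the supremum over $\|v\|_{\mathcal{H}}\le 1$. No substantive difference from the paper's argument.
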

\begin{proof}
	Let $v \in \mathcal{H}$ and $v_h \in V_h$ be the approximation of $v$. Using the similar ideas for both $t=1,2$ as in the subsections (\ref{ssec1}) and (\ref{ssec2}) and by definition of $G_h^{(t)}$, we obtain
	\begin{align*}
	 \langle G_h^{(t)}, v \rangle &=a_h(u-\tilde{u}_h^{(t)},v)+\langle  \tilde{\sigma} -B_h \sigma_h^{(t)}, v\rangle  \\ &= (f,v) -a_h(\tilde{u}_h^{(t)},v) - \langle  B_h \sigma_h^{(t)}, v\rangle \\ & = (f-\sigma_h^{(t)},v-v_h) -a_h(u_h^{(t)},v-v_h) - \langle  B_h \sigma_h^{(t)}-\sigma_h^{(t)}, v-v_h \rangle \\ & \hspace{0.4cm}+ (f,v_h) -a_h(u_h^{(t)},v_h) - \langle  B_h \sigma_h^{(t)}, v_h \rangle -a_h(\tilde{u}_h^{(t)}-u_h^{(t)},v) \\ & \leq \Bigg( \sum_{T \in \cT_h} \int_T(f+\Delta_h u_h^{(t)}-\sigma_h^{(t)})( v-v_h)~ dx -\sum_{e \in \cE_h^i}\int_e\sjump{\nabla u_h^{(t)}}(v-v_h)~ds \Bigg) \\ & \hspace{0.5cm}+ (t-1)((\sigma_h^{(t)}- B_h \sigma_h^{(t)} , \tilde{\Pi}^{-1}_hv_h-v_h)) +b_h(u_h^{(t)},v)-a_h(\tilde{u}_h^{(t)}-u_h^{(t)},v)\\ &  \hspace{0.5cm}+ (t-1)( ( \sigma_h^{(t)}- B_h \sigma_h^{(t)},v - Q_h v )) \\ & \hspace{0.5cm}- \sum_{e \in \cE_h} \int_e \{\{\nabla u_h^{(t)}\}\} [[v-v_h]]~ds-b^*_h(u_h^{(t)},v-v_h)
	\end{align*}
	Using the H\"older's inequality, discrete trace inequality  \eqref{3.1}, Lemmas \ref{5.7}, \ref{5.8} and \ref{key} and equations \eqref{DR2}, \eqref{4.2} and \eqref{4.16}, we have the desired proof.
\end{proof}

\noindent
Next, we prove the reliability estimate for the term	$||\tilde{\sigma}-\sigma^{(t)}_h ||_{-2, \infty, \Omega}$.
\begin{theorem}
	Let $\tilde{\sigma}$ be as defined in equation \eqref{CCCC} and for $t=1,2$, let $\sigma^{(t)}_h$ be the discrete Lagrange multipliers defined in equations \eqref{DLM} and \eqref{DLLM}, respectively, then 
	\begin{align} \label{5.21}
	||\tilde{\sigma}-\sigma^{(t)}_h ||_{-2, \infty, \Omega} \lesssim \eta_h,
	\end{align}
	where $\eta_h$ is defined in the Thoerem \ref{main}.
	\begin{proof}
		Let $v \in \mathcal{H}$. Using equation (\ref{QWA}), we have
		\begin{align*}
	\langle  \tilde{\sigma} -\sigma^{(t)}_h	,v\rangle &= \langle  \tilde{\sigma} -B_h \sigma_h^{(t)}, v\rangle + \langle  B_h \sigma_h^{(t)}- \sigma^{(t)}_h, v\rangle \\ &=a_h(u-\tilde{u}_h^{(t)},v) -	\langle G_h^{(t)}, v \rangle + \langle  B_h \sigma_h^{(t)}- \sigma^{(t)}_h, v\rangle .
		\end{align*}
		Using the definition of $B_h \sigma_h^{(t)}$ and using $ (Q_h \sigma_h^{(2)}- \sigma^{(2)}_h, Q_h(v)) =0 $, we conclude
		\begin{align} \label{eqq1}
			\langle  \tilde{\sigma} -\sigma^{(t)}_h	,v\rangle = a_h(u-\tilde{u}_h^{(t)},v) - \langle G_h^{(t)}, v \rangle + (Q_h \sigma_h^{(2)}- \sigma^{(2)}_h, v - Q_h(v)).
		\end{align}
		We deal with the first term on the right hand side in the equation (\ref{eqq1}). Using integration by parts and the fact $u- \tilde{u}_h^{(t)} =0$ on $\partial \Omega$, we get
		\begin{align*}
		a_h(u-\tilde{u}_h^{(t)},v) & =  \int_{\Omega} \nabla (u- \tilde{u}_h^{(t)} ) \cdot \nabla v dx  \\ & = - \int_{\Omega} (u- \tilde{u}_h^{(t)} ) \cdot \Delta v dx.
		\end{align*}
		In the view of Lemma (\ref{lem:average}) and triangle inequality, we have
		\begin{align*}
		a_h(u-\tilde{u}_h^{(t)},v) &\lesssim \big(||u-u_h^{(t)}||_{L_{\infty}(\Omega)}  + \eta_3 \big) |v|_{W^{2,1}(\Omega)}.
		\end{align*}
		Using the bounds on $||u-u_h^{(t)}||_{L_{\infty}(\Omega)}$ (Theorem \ref{main}), $||G_h^{(t)}||_{-2,\infty,\Omega}$ (Lemma \ref{bGalerkin}) and Lemma \ref{approximation}, we obtain the estimate \eqref{5.21}.
	\end{proof}
\end{theorem}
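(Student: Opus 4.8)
The plan is to test against an arbitrary $v\in\mathcal H=W^{2,1}_0(\Omega)$ with $\|v\|_{\mathcal H}\le1$ and to bound $\langle\tilde\sigma-\sigma_h^{(t)},v\rangle$ by $\eta_h$; in view of the definition \eqref{opnorm} of $\|\cdot\|_{-2,\infty,\Omega}$ this is exactly \eqref{5.21}. First I would write
\begin{align*}
\langle\tilde\sigma-\sigma_h^{(t)},v\rangle=\langle\tilde\sigma-B_h\sigma_h^{(t)},v\rangle+\langle B_h\sigma_h^{(t)}-\sigma_h^{(t)},v\rangle,
\end{align*}
and then use the definition \eqref{QWA} of $G_h^{(t)}$ to rewrite the first summand as $\langle G_h^{(t)},v\rangle-a_h(u-\tilde u_h^{(t)},v)$. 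This isolates three pieces to estimate: the consistency term $a_h(u-\tilde u_h^{(t)},v)$, the Galerkin functional $\langle G_h^{(t)},v\rangle$, and the correction term $\langle B_h\sigma_h^{(t)}-\sigma_h^{(t)},v\rangle$, the last of which vanishes identically for $t=1$ because $B_h\sigma_h^{(1)}=\sigma_h^{(1)}$.

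For the consistency term I would use that $u\in H^1_0(\Omega)$ and $\tilde u_h^{(t)}=E_hu_h^{(t)}\in V_c\subset H^1_0(\Omega)$, so $u-\tilde u_h^{(t)}$ vanishes on $\partial\Omega$; a single integration by parts then gives $a_h(u-\tilde u_h^{(t)},v)=-\int_\Omega(u-\tilde u_h^{(t)})\,\Delta v\,dx$, which is also what gives the pairing a meaning when $v$ is merely $W^{2,1}$. Hölder's inequality bounds this by $\|u-\tilde u_h^{(t)}\|_{L_\infty(\Omega)}\,|v|_{2,1,\Omega}$; a triangle inequality together with \eqref{pp} of Lemma \ref{lem:average} replaces $\tilde u_h^{(t)}$ by $u_h^{(t)}$ at the cost of an additional $\eta_3$; and Theorem \ref{main} bounds $\|u-u_h^{(t)}\|_{L_\infty(\Omega)}$ (as well as $\eta_3$) by $\eta_h$. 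Hence $a_h(u-\tilde u_h^{(t)},v)\lesssim\eta_h$.

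For the Galerkin functional I would invoke Lemma \ref{bGalerkin} directly: $|\langle G_h^{(t)},v\rangle|\le\|G_h^{(t)}\|_{-2,\infty,\Omega}\,\|v\|_{\mathcal H}\lesssim\eta_1+\eta_2+\eta_3+(t-1)\eta_4\le\eta_h$. For the correction term with $t=2$, I would note that $Q_h\sigma_h^{(2)}$ is the $L_2$-projection of $\sigma_h^{(2)}$ onto $V_0$, so $Q_h\sigma_h^{(2)}-\sigma_h^{(2)}$ is $L_2$-orthogonal to $Q_hv\in V_0$ and therefore $\langle B_h\sigma_h^{(2)}-\sigma_h^{(2)},v\rangle=\pm(\sigma_h^{(2)}-Q_h\sigma_h^{(2)},v-Q_hv)$. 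An element-wise Hölder estimate, inserting the factors $h_T^{\pm2}$ and using the definition of $\eta_4$, bounds this by $\eta_4\sum_T h_T^{-2}\|v-Q_hv\|_{L_1(T)}$; Lemma \ref{approximation} with $s=1$ reduces the sum to $\eta_4\sum_T h_T^{-1}|v|_{1,1,T}$, and then the same manipulation used in the proof of Lemma \ref{5.7} (element-wise Hölder as in \eqref{er}, followed by the Sobolev/Poincar\'e embedding of Lemma \ref{poincare}) controls this by $\eta_4\,\|v\|_{\mathcal H}\le\eta_h$. Adding the three bounds and taking the supremum over $\|v\|_{\mathcal H}\le1$ yields \eqref{5.21}.

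Most of this is assembly: the genuinely substantive inputs, namely the supremum-norm reliability bound of Theorem \ref{main} and the dual-norm bound on the Galerkin functional of Lemma \ref{bGalerkin}, are already available. The point that needs care is bookkeeping of the logarithm: the consistency term must be reduced precisely to $\|u-u_h^{(t)}\|_{L_\infty(\Omega)}$ — which already carries the single factor $|\mathrm{log}(h_{\min})|$ appearing in $\eta_h$ — so that no extra power of the logarithm (nor of $h_{\min}^{-1}$) is introduced; this is the reason for integrating by parts rather than estimating $a_h(u-\tilde u_h^{(t)},v)$ directly, and it is also why the correction term is handled in the $\mathcal H$-dual norm rather than pointwise.
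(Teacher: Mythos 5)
Your proposal is correct and follows essentially the same route as the paper: the same splitting via $B_h\sigma_h^{(t)}$ and the Galerkin functional \eqref{QWA}, integration by parts plus Lemma \ref{lem:average} and Theorem \ref{main} for $a_h(u-\tilde u_h^{(t)},v)$, Lemma \ref{bGalerkin} for $\langle G_h^{(t)},v\rangle$, and the $Q_h$-orthogonality with Lemma \ref{approximation} for the $t=2$ correction term. The only difference is that you spell out the final step for that correction term (elementwise H\"older, then the embedding of Lemma \ref{poincare}) which the paper leaves implicit, and your sign convention for $\langle G_h^{(t)},v\rangle$ is the consistent one; neither affects the estimate.
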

\subsection{Results for Conforming Finite Element Method.} The conforming finite element for the problem \eqref{SSSS} is to find $u^{(t)}_{hc}$ such that
\begin{align} \label{SSS}
a(u^{(t)}_{hc},v^{(t)}_{hc}-u^{(t)}_{hc}) \leq (f,v_{hc}^{(t)}-u^{(t)}_{hc}) \quad \forall v_{hc}^{(t)} \in \cK_{hc}^{(t)}.
\end{align}
where
\begin{align*}
\bullet~~~~~~~ \cK_{hc}^{(1)}&= \{v_{hc} \in V_c: Q_h(v_{hc}) \geq Q_h(\chi)\}, \\
\bullet~~~~~~~ \cK_{hc}^{(2)}&=\{ v_{hc} \in V_c: v_{hc}(r) \geq \chi(r)~~\forall r \in   \cV_{\mathcal{Q}_T}, ~~ \forall~ T~ \in \cT_h \}.
\end{align*}
Hence, our theory will lead to the following reliability estimates which is comparable with the estimator presented in \cite{KP:2022:Obstacle}:
\begin{lemma}
	Let $u \in \cK$ and $u^{(t)}_{hc} \in \cK_{hc}^{(t)}$ be the solutions of equation \eqref{SSSS} and \eqref{SSS}, respectively. Then, 
	\begin{align}
	||u-u_{hc}^{(t)}||_{L_{\infty}(\Omega)} \lesssim \eta_{hc},
	\end{align}
	where $\eta_{hc}=|log~h_{min}|(\eta_1 + \eta_2 + (t-1)\eta_4) + \|(\chi-u^{(t)}_{hc})^{+}\|_{L_{\infty}(\Omega)} +
	\|(u^{(t)}_{hc}-\chi)^+\|_{L_{\infty}\{B_h \sigma_h^{(t)}<0\}}.$
\end{lemma}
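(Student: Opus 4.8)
The plan is to rerun the a posteriori analysis of Section~\ref{sec5} with the DG iterate $u_h^{(t)}$ replaced throughout by the conforming iterate $u_{hc}^{(t)} \in V_c$, exploiting that every quantity built from the jumps of $u_{hc}^{(t)}$ or from the averaging operator now collapses. Since $u_{hc}^{(t)} \in V_c \subset H^1_0(\Omega)$ we have $\sjump{u_{hc}^{(t)}} = 0$ on every $e \in \cE_h$, hence $\eta_3 = 0$; moreover $E_h u_{hc}^{(t)} = u_{hc}^{(t)}$, so the conforming part $\tilde u_h^{(t)}$ equals $u_{hc}^{(t)}$, and $\mathcal{A}_h(u_{hc}^{(t)}, v_c) = a(u_{hc}^{(t)}, v_c)$ for all $v_c \in V_c$. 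First I would introduce the conforming discrete Lagrange multipliers $\sigma_{hc}^{(t)}$ (the object written $\sigma_h^{(t)}$ in the statement) by the analogues of \eqref{DLM} and \eqref{DLLM} with $u_h^{(t)}$ replaced by $u_{hc}^{(t)}$ and $\mathcal{A}_h$ replaced by $a$; the sign properties $B_h\sigma_{hc}^{(t)} \leq 0$ and the vanishing of $B_h\sigma_{hc}^{(t)}$ on the discrete non-contact sets then follow verbatim from the proofs of Theorems~\ref{signn} and \ref{4.6} and Lemma~\ref{signnn}, since those proofs only test the discrete variational inequality \eqref{SSS}. Next I would define the corrector $w^{(t)} \in H^1_0(\Omega)$ by $\int_\Omega \nabla w^{(t)} \cdot \nabla v\,dx = (f,v) - a(u_{hc}^{(t)}, v) - (B_h\sigma_{hc}^{(t)}, v)$ for all $v \in H^1_0(\Omega)$, the right-hand side being the natural conforming analogue of $\langle G_h^{(t)}, \cdot\rangle$.

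Then I would carry over the barrier construction, setting
\begin{align*}
u^{*}_{(t)} &= u_{hc}^{(t)} + w^{(t)} + \|w^{(t)}\|_{L_\infty(\Omega)} + \|(\chi - u_{hc}^{(t)})^+\|_{L_\infty(\Omega)}, \\
u_{*}^{(t)} &= u_{hc}^{(t)} + w^{(t)} - \|w^{(t)}\|_{L_\infty(\Omega)} - \|(u_{hc}^{(t)} - \chi)^+\|_{L_\infty\{B_h\sigma_{hc}^{(t)} < 0\}}.
\end{align*}
The proof of Lemma~\ref{lem:barrier} uses only the defining identity of the corrector, Remark~\ref{sign}, and the sign of $B_h\sigma_{hc}^{(t)}$, never the averaging operator, so it applies unchanged and yields $u_{*}^{(t)} \leq u \leq u^{*}_{(t)}$. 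Because now $u_{hc}^{(t)} - E_h u_{hc}^{(t)} = 0$ and $\eta_3 = 0$, the argument of Lemma~\ref{reliability} simplifies to the direct bound
\begin{align*}
\|u - u_{hc}^{(t)}\|_{L_\infty(\Omega)} \lesssim \|w^{(t)}\|_{L_\infty(\Omega)} + \|(\chi - u_{hc}^{(t)})^+\|_{L_\infty(\Omega)} + \|(u_{hc}^{(t)} - \chi)^+\|_{L_\infty\{B_h\sigma_{hc}^{(t)} < 0\}},
\end{align*}
so it remains only to estimate $\|w^{(t)}\|_{L_\infty(\Omega)}$.

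For the last step I would mimic Subsections~\ref{ssec1}--\ref{ssec2}: pick $y^* \in \Omega \setminus \partial\Omega$ with $|w^{(t)}(y^*)| = \|w^{(t)}\|_{L_\infty(\Omega)}$, so that $\|w^{(t)}\|_{L_\infty(\Omega)} = \langle G_h^{(t)}, G(y^*, \cdot)\rangle$ by \eqref{eqq} and \eqref{FFF}, write $G_c = \Pi_h G$, and integrate by parts elementwise. In the resulting identity every term containing $b_h^*(u_{hc}^{(t)}, \cdot)$, every jump term in $\sjump{u_{hc}^{(t)}}$, and the term $a_h(E_h u_{hc}^{(t)} - u_{hc}^{(t)}, G)$ vanish identically; what survives is the element residual $\sum_{T \in \cT_h} \int_T (f + \Delta u_{hc}^{(t)} - \sigma_{hc}^{(t)})(G - G_c)\,dx$, the normal-derivative jump term $-\sum_{e \in \cE_h^i} \int_e \sjump{\nabla u_{hc}^{(t)}}\{\{G - G_c\}\}\,ds$ (which does not vanish, since $\sjump{\nabla u_{hc}^{(t)}}$ is generally nonzero on interior edges, which is why $\eta_2$ is retained), and, only when $t = 2$, the two consistency terms $(\sigma_{hc}^{(2)} - Q_h\sigma_{hc}^{(2)}, \tilde{\Pi}_h^{-1}G_c - G_c)$ and $(\sigma_{hc}^{(2)} - Q_h\sigma_{hc}^{(2)}, G - Q_h G)$. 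Estimating these with H\"older's inequality, the trace inequality \eqref{3.1}, and Lemmas~\ref{approx} and \ref{approximation} exactly as in items (a)--(b) of those subsections gives, with $p = d/(d-1)$,
\begin{align*}
\|w^{(t)}\|_{L_\infty(\Omega)} \lesssim \big(\eta_1 + \eta_2 + (t-1)\eta_4\big)\Big(\sum_{T \in \cT_h \cap \cT_1} h_T^{-1}|G|_{1,1,T} + \sum_{T \in \cT_h \setminus \cT_1} |G|_{2,1,T} + \sum_{T \in \cT_h \setminus \cT_1} |G|_{1,p,T}\Big),
\end{align*}
and inserting the Green's-function bounds of Lemma~\ref{Grn} as in the proof of Theorem~\ref{main} produces the factor $|\log h_{min}|$, which combined with the displayed reliability inequality yields $\|u - u_{hc}^{(t)}\|_{L_\infty(\Omega)} \lesssim \eta_{hc}$. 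I expect the only real obstacle to be the bookkeeping of precisely which DG contributions vanish for a conforming iterate, together with confirming that replacing $\mathcal{A}_h$ by $a$ leaves the Lagrange-multiplier sign properties intact; both are routine given the results already established in Sections~\ref{sec4}--\ref{sec5}.
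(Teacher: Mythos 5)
Your proposal is correct and follows exactly the route the paper intends: the paper gives no separate proof of this lemma, asserting only that the DG theory of Sections \ref{sec4}--\ref{sec5} specializes to the conforming case, and your specialization (jumps vanish so $\eta_3=0$, $E_h u_{hc}^{(t)}=u_{hc}^{(t)}$, barrier construction and Green's-function bound carried over verbatim) is precisely that argument with the details filled in. The only point worth noting is that the well-definedness and sign properties of the conforming multipliers $\sigma_{hc}^{(t)}$ require testing only with conforming functions (so the argument of Theorem \ref{signn} needs $Q_h$ restricted to $V_c$ to remain onto $V_0$, as in the cited conforming works), but this is a routine adjustment rather than a gap.
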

\subsection{Pointwise Lower Bound} After providing the reliable error estimator $\eta_h$, the question of whether or not this estimator would overestimate the error arises. This section aims to look into this problem regarding the estimator $\eta_h$. We say an estimator, 'locally efficient' if it is dominated by the error and the local data oscillations. Local efficiency prevents adaptive refinement against over-refinement. Standard bubble functions technique is used to prove the efficiency estimates of the error estimator in this section. We refer \cite{nochetto2003pointwise,nochetto2005fully,nochetto2006pointwise,demlow2016maximum} for the efficiency results for non linear problems in the supremum norm. Because of the quadratic nature of the discrete solution, the efficiency of the term	$	\|(u_h^{(t)}-\chi)^+\|_{L_{\infty}\{B_h \sigma_h^{(t)}<0\}} $ is still not clear.
We now state and prove the main results of this section.
\begin{lemma}
		 It holds that
		 \begin{align} \label{eq19}
		&	h_T^2 \|\Delta u_h^{(t)}+f-\sigma_h^{(t)}\|_{L_{\infty}(T)} + \|(\chi-u_h^{(t)})^{+}\|_{T} \lesssim \Big( \|u-u_h^{(t)}\|_{L_{\infty}(T)} + \|\sigma-\sigma_h^{(t)}\|_{-2,\infty,T} \nonumber \\ &\hspace{8cm}+ Osc(f,T)  \Big), \quad \forall~ T \in \cT_h.
		\end{align}
\end{lemma}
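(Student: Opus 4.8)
The plan is to prove the two constituent bounds separately by the standard bubble-function machinery adapted to the $L_\infty$ setting. First I would handle the residual term $h_T^2\|\Delta u_h^{(t)}+f-\sigma_h^{(t)}\|_{L_\infty(T)}$. Set $R_T := \Delta u_h^{(t)} + f_T - \sigma_h^{(t)}$ on a fixed $T$, where $f_T := Q_T(f)$ is the $L_\infty$-best (or $L_2$) constant approximation of $f$ on $T$ appearing in $Osc(f,T)$. Since $R_T$ is a polynomial on $T$, pick the interior bubble function $b_T \in H_0^1(T)$ (the product of barycentric coordinates, extended by zero) and use the polynomial equivalence $\|R_T\|_{L_\infty(T)}^2 \lesssim |T|^{-1}\int_T R_T^2 b_T\,dx$ together with $\int_T R_T^2 b_T \lesssim \int_T R_T (R_T b_T)$. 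Then integrate by parts on $T$: since $R_T b_T$ vanishes on $\partial T$,
\begin{align*}
\int_T R_T (R_T b_T)\,dx &= \int_T (\Delta u_h^{(t)} + f_T - \sigma_h^{(t)})(R_T b_T)\,dx \\
&= -\int_T \nabla u_h^{(t)}\cdot\nabla (R_T b_T)\,dx + \int_T (f - \sigma_h^{(t)})(R_T b_T)\,dx + \int_T (f_T - f)(R_T b_T)\,dx.
\end{align*}
Now $\nabla u_h^{(t)}$ acting against the test function $R_Tb_T \in H_0^1(T)\subset H_0^1(\Omega)$ can be rewritten using the definition \eqref{CCC} of $\sigma$ and the continuous problem: $-\int_T \nabla u_h^{(t)}\cdot\nabla(R_Tb_T) = \int_T \nabla(u - u_h^{(t)})\cdot\nabla(R_Tb_T) - \int_T\nabla u\cdot\nabla(R_Tb_T) = \int_T\nabla(u-u_h^{(t)})\cdot\nabla(R_Tb_T) - (f, R_Tb_T) + \langle\sigma, R_Tb_T\rangle$. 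Substituting, the $f$ terms cancel and one is left with $\int_T\nabla(u-u_h^{(t)})\cdot\nabla(R_Tb_T) + \langle\sigma - \sigma_h^{(t)}, R_Tb_T\rangle + \int_T(f_T-f)(R_Tb_T)$. The first term: integrate by parts back (or estimate directly), bounding by $\|u-u_h^{(t)}\|_{L_\infty(T)}|R_Tb_T|_{2,1,T}$; the second by $\|\sigma-\sigma_h^{(t)}\|_{-2,\infty,T}\|R_Tb_T\|_{\mathcal{H}_T}$; the third by $Osc$-type term. Using inverse estimates $|R_Tb_T|_{2,1,T} \lesssim h_T^{-2}|T|\|R_T\|_{L_\infty(T)}$ and collecting powers of $h_T$ gives $h_T^2\|R_T\|_{L_\infty(T)} \lesssim \|u-u_h^{(t)}\|_{L_\infty(T)} + \|\sigma-\sigma_h^{(t)}\|_{-2,\infty,T} + Osc(f,T)$; the difference $\|\Delta u_h^{(t)}+f-\sigma_h^{(t)}\|$ versus $\|R_T\|$ is absorbed into $Osc(f,T)$ by the triangle inequality.

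Next I would treat the contact term $\|(\chi - u_h^{(t)})^+\|_{L_\infty(T)}$. Fix $x_0 \in T$ with $(\chi - u_h^{(t)})^+(x_0) = \|(\chi-u_h^{(t)})^+\|_{L_\infty(T)}$; if this quantity is zero there is nothing to prove, so assume $\chi(x_0) > u_h^{(t)}(x_0)$. Since $u \in \mathcal{K}$, $u(x_0) \geq \chi(x_0) > u_h^{(t)}(x_0)$, hence $(\chi-u_h^{(t)})^+(x_0) \leq u(x_0) - u_h^{(t)}(x_0) \leq \|u-u_h^{(t)}\|_{L_\infty(T)}$. This is the cleanest sub-step: the contact residual is pointwise dominated by the solution error directly, with no bubble function needed, because the obstacle constraint is respected by the exact solution.

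Finally I would combine: from the first part $h_T^2\|\Delta u_h^{(t)}+f-\sigma_h^{(t)}\|_{L_\infty(T)} \lesssim \|u-u_h^{(t)}\|_{L_\infty(T)} + \|\sigma-\sigma_h^{(t)}\|_{-2,\infty,T} + Osc(f,T)$ and from the second $\|(\chi-u_h^{(t)})^+\|_T \lesssim \|u-u_h^{(t)}\|_{L_\infty(T)}$, and add them to get \eqref{eq19}. The main obstacle I anticipate is the careful bookkeeping of the norms on $R_Tb_T$: one must verify that $R_Tb_T \in \mathcal{H}_T = W_0^{2,1}(T)$ with $\|R_Tb_T\|_{\mathcal{H}_T} = |R_Tb_T|_{1,1,T} + |R_Tb_T|_{2,1,T}$ scaling correctly (namely $\lesssim h_T^{-2}|T|\,\|R_T\|_{L_\infty(T)} \simeq h_T^{d-2}\|R_T\|_{L_\infty(T)}$) so that the exponent of $h_T$ matches after multiplying through by $h_T^2$ and dividing by $|T| \simeq h_T^d$ from the polynomial norm-equivalence step; an analogous $W^{1,1}$ bound is needed for the term where we integrate $\nabla(u-u_h^{(t)})$ against $\nabla(R_Tb_T)$ after putting one derivative back, giving $\|u-u_h^{(t)}\|_{L_\infty(T)}|R_Tb_T|_{2,1,T}$. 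Shape-regularity makes all these scalings uniform in $T$, but the dimension-dependent powers must be tracked honestly. Also one should note the constant $f_T$ used inside the bubble argument must be taken as a minimizer realizing $Osc(f,T)$ (or close to it) so that $\int_T (f_T - f)(R_Tb_T)$ is controlled by $Osc(f,T)\|R_T\|_{L_\infty(T)}$ up to the same $h_T$ powers.
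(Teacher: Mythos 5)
Your proposal is correct and follows essentially the same route as the paper: the obstacle term is handled by the pointwise domination $(\chi-u_h^{(t)})^{+}\le (u-u_h^{(t)})^{+}$ from $u\ge\chi$, and the volume residual is treated with an element bubble applied to $\Delta u_h^{(t)}+\bar f-\sigma_h^{(t)}$, rewritten through the definition \eqref{CCC} of $\sigma$, paired against $\|u-u_h^{(t)}\|_{L_\infty(T)}$, $\|\sigma-\sigma_h^{(t)}\|_{-2,\infty,T}$ and the oscillation, and closed with inverse/scaling estimates. The only cosmetic differences (going directly from $L_\infty$ to the bubble-weighted quadratic form instead of via $L_2$, and the specific bubble chosen) do not change the argument.
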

\begin{proof}
		 In the view of $u \geq \chi$, we find $(\chi-u_h^{(t)})^{+} \leq (u-u_h^{(t)})^{+}$, hence the bound on the second term of left hand side in the estimate follows immediately. Next, we estimate the volume residual $h_T^2 \|\Delta u_h^{(t)}+f-\sigma_h^{(t)}\|_{L_{\infty}(T)}$. Let $b_T \in P_{2(d+1)}(T)$ be an element bubble function which is zero on $\partial T$ and assumes unit value at the barycenter of T. Moreover, it holds $\| b_T\|_{L_{\infty}(T)} \leq C$, where $C$ is a positive constant. Let $\bar{f} \in P_0(T)$ be a piecewise constant approximation of $f$. Let $q^{(t)}$ be the extension of $q_T^{(t)}=b_T (\Delta u_h^{(t)}+\bar{f}-\sigma_h^{(t)})$ by zero to $\Omega.$ A use of inverse inequality (Lemma \ref{inverse}) yields the following 
		\begin{align} \label{eq11}
		h_T^2 \|\Delta u_h^{(t)}+\bar{f}-\sigma_h^{(t)}\|_{L_{\infty}(T)} & \lesssim h_T^{2-\frac{d}{2}} \|\Delta u_h^{(t)}+\bar{f}-\sigma_h^{(t)}\|_{L_2(T)} \nonumber \\
			h_T^4 \|\Delta u_h^{(t)}+\bar{f}-\sigma_h^{(t)}\|_{L_{\infty}(T)}^2 & \lesssim h_T^{4-d} \|\Delta u_h^{(t)}+\bar{f}-\sigma_h^{(t)}\|_{L_2(T)}^2 \quad\text{(squaring both sides)},
		\end{align} and due to the equivalence of norms in finite dimensional normed spaces and scaling arguments, there exist a constant $C >0$ such that
		\begin{align*}
		h_T^{4-d} \|\Delta u_h^{(t)}+\bar{f}-\sigma_h^{(t)}\|_{L_2(T)}^2 \leq C h_T^{4-d} \int_T  (\Delta u_h^{(t)}+\bar{f}-\sigma_h^{(t)}) q_T^{(t)} ~dx.
		\end{align*}
		Now, a use of integration by parts, equation \eqref{CCC} and noting that $q^{(t)} \in H^1_0(\Omega)$ yields
		\begin{align} \label{eq12}
		 h_T^{4-d} \int_T  (\Delta u_h^{(t)}+\bar{f}-\sigma_h^{(t)}) q_T^{(t)} ~dx &=  h_T^{4-d} \Bigg( \int_T  (f+ \Delta u_h^{(t)}-\sigma_h^{(t)}) q_T^{(t)} ~dx + \int_T (\bar{f}-f) q_T^{(t)}  ~dx \Bigg) \nonumber \\
		&= h_T^{4-d} \Bigg( \langle \sigma-\sigma_h^{(t)}, q^{(t)}\rangle+a(u-u_h^{(t)},q^{(t)})  \\ & \hspace{3cm}+ \int_T (-\bar{f}+f) q_T^{(t)}  ~dx \Bigg) \nonumber \\
		& \lesssim h_T^{4-d} \bigg(||\sigma-\sigma_h^{(t)}||_{-2,\infty,T} + \|u-u_h^{(t)}\|_{L_{\infty}(T)}\bigg) |D^2q_T^{(t)}|_{L_1(T)} \nonumber\\ & \quad+h_T^{4-d}||(\bar{f}-f)||_{L_{\infty}(T)}  ||q_T^{(t)}||_{L^1(T)}.
		\end{align}
	Next, using equations \eqref{eq11} and \eqref{eq12}, we have
	\begin{align*}
	h_T^4 \|\Delta u_h^{(t)}+\bar{f}-\sigma_h^{(t)}\|_{L_{\infty}(T)}^2 & \leq C h_T^{4-d}\Big(||\sigma-\sigma_h^{(t)}||_{-2,\infty,T} + \|u-u_h^{(t)}\|_{L_{\infty}(T)}\Big) |D^2q_T^{(t)}|_{L_1(T)}, \\ & \quad+ h_T^{4-d}||(\bar{f}-f)||_{L_{\infty}(T)}  ||q_T^{(t)}||_{L^1(T)}.
	\end{align*}
	A use of Lemma \ref{inverse} and the structure of $q_T^{(t)}$ yields
		\begin{align*}
	||q_T^{(t)}||_{L^1(T)} & \lesssim h_T^d ||q_T^{(t)}||_{L_{\infty}(T)}   \lesssim h_T^d ||(\Delta u_h^{(t)}+\bar{f}-\sigma_h^{(t)})||_{L_{\infty}(T)}
	\end{align*} and
	\begin{align*}
     |D^2q_T^{(t)}|_{L_1(T)} & \lesssim h^{-2}_T ||q_T^{(t)}||_{L^1(T)}  \lesssim h^{d-2}_T||q_T^{(t)}||_{L_{\infty}(T)} \\  & \lesssim h^{d-2}_T||(\Delta u_h^{(t)}+\bar{f}-\sigma_h^{(t)})||_{L_{\infty}(T)}.
	\end{align*}
	Therefore, combining the estimates, we derive the estimate \eqref{eq19}.
\end{proof}
\begin{lemma}
		The following estimate holds
		\begin{align*}
		\|\sjump{ u_h^{(t)}}\|_{L_{\infty}(e)} \lesssim \|u-u_h^{(t)}\|_{L_{\infty}(\Omega_e)} \quad \forall e \in \cE_h,
		\end{align*}
		where $\Omega_e$ is the union of elements sharing the face $e$.
\end{lemma}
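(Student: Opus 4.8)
The plan is to use the continuity of the exact solution $u$ to convert the jump of $u_h^{(t)}$ across a face into a jump of the error $u-u_h^{(t)}$, and then dominate each one-sided trace on the face by the $L_\infty$-norm of the error on the adjacent element.

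First I would observe that, under the standing hypotheses, $u\in H^1_0(\Omega)\cap C(\bar\Omega)$ (cf. Remark \ref{sign}). Hence for an interior face $e=\partial T^+\cap\partial T^-$ the two one-sided traces of $u$ on $e$ agree, so $\sjump{u}=u\,n^+ + u\,n^- = 0$ on $e$; and for a boundary face $e\subset\partial\Omega$ we have $u|_e=0$, so again $\sjump{u}=u\,n^e=0$. Therefore $\sjump{u_h^{(t)}}=\sjump{u_h^{(t)}-u}$ on every $e\in\cE_h$.

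Next I would use $n^-=-n^+$ to reduce the vector jump on an interior face to the scalar difference of one-sided traces, so that, by the triangle inequality and the fact that $\cT_e$ consists of at most two elements,
\begin{align*}
\|\sjump{u_h^{(t)}}\|_{L_\infty(e)} = \big\| (u_h^{(t)}-u)|_{T^+} - (u_h^{(t)}-u)|_{T^-} \big\|_{L_\infty(e)} \le \sum_{T\in\cT_e} \|u-u_h^{(t)}\|_{L_\infty(T)} \lesssim \|u-u_h^{(t)}\|_{L_\infty(\Omega_e)},
\end{align*}
since each one-sided trace on $e$ is the restriction to $e$ of the continuous function $u_h^{(t)}-u$ on $\bar T$; the boundary-face case is identical with a single element. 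Recalling that $\Omega_e$ is the union of the elements of $\cT_e$ then finishes the argument.

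The estimate is essentially routine; the only step deserving care is the vanishing of $\sjump{u}$ pointwise a.e.\ on interior faces, which is exactly where the continuity of $u$ is used — for a merely $H^1$ solution the two traces coincide only in the $L_2(e)$ sense, and one would then obtain an $L_2$-type efficiency bound in place of the stated $L_\infty$ one.
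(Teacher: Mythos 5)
Your proposal is correct and follows essentially the same route as the paper: use the continuity of $u$ (and $u=0$ on $\partial\Omega$ for boundary faces) to replace $\sjump{u_h^{(t)}}$ by $\sjump{u-u_h^{(t)}}$, then bound the one-sided traces by $\|u-u_h^{(t)}\|_{L_\infty(\Omega_e)}$. Your extra remarks on reducing the vector jump via $n^-=-n^+$ and on where continuity (rather than mere $H^1$ trace equality) is needed simply make explicit what the paper leaves implicit.
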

\begin{proof}
	Let $e \in \cE^i_h$ and using the continuity of $u$, we have $[[u]]=0$ on $e$. Moreover, we obtain
	\begin{align*}
	\|\sjump{ u_h^{(t)}}\|_{L_{\infty}(e)} &= \|\sjump{u- u_h^{(t)}}\|_{L_{\infty}(e)} \\ & \lesssim \|u-u_h^{(t)}\|_{L_{\infty}(\Omega_e)},
	\end{align*}
	where $\Omega_e= \bar{T}_1 \cup \bar{T}_2$ and $\bar{T}_1,\bar{T}_2$ are elements sharing an edge/face $e$. For $e \in \cE^b_h$, the proof follows immediately as $u=0$ on $\partial \Omega$.
\end{proof}
\begin{lemma}
	 It holds that
	\begin{align*}
	h_e\|\sjump{\nabla u_h^{(t)}}\|_{L_{\infty}(e)} \lesssim \|u-u_h^{(t)}\|_{L_{\infty}(\Omega_e)}+ \|\sigma-\sigma_h^{(t)}\|_{-2,\infty,\Omega_e}+ Osc(f,\Omega_e) \quad  \forall~ e \in \cE_h^i.
	\end{align*}
\end{lemma}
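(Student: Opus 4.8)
The plan is to bound the edge residual $h_e\|\sjump{\nabla u_h^{(t)}}\|_{L_\infty(e)}$ for an interior face $e$ by testing it against a suitable face-bubble function, in the spirit of the standard bubble-function technique used for the two preceding lemmas. First I would fix $e \in \cE_h^i$ with $\Omega_e = \bar T_1 \cup \bar T_2$, let $\bar f \in P_0(\Omega_e)$ be a piecewise constant approximation of $f$ on the two elements, and introduce a face-bubble function $b_e$ supported on $\Omega_e$, with $b_e = 0$ on $\partial\Omega_e$, $\|b_e\|_{L_\infty} \lesssim 1$, and $b_e$ assuming unit value at the barycenter of $e$. I would then set $q_e^{(t)}$ to be the extension by zero to $\Omega$ of $b_e\, \sjump{\nabla u_h^{(t)}}$ (extended suitably off $e$ into $\Omega_e$, using the polynomial extension of the jump so that $q_e^{(t)} \in H^1_0(\Omega)$ and is a polynomial on each of $T_1, T_2$).

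The core identity comes from integration by parts on $T_1$ and $T_2$ separately against $q_e^{(t)}$: since $q_e^{(t)}$ vanishes on $\partial\Omega_e$ but not across $e$, the boundary terms collapse to $\int_e \sjump{\nabla u_h^{(t)}}\, q_e^{(t)}\, ds$, giving
\begin{align*}
\int_e \sjump{\nabla u_h^{(t)}}\, q_e^{(t)}\, ds &= \sum_{i=1,2}\int_{T_i}\big(\nabla u_h^{(t)}\cdot\nabla q_e^{(t)} + \Delta u_h^{(t)}\, q_e^{(t)}\big)\, dx \\
&= a(u_h^{(t)},q_e^{(t)}) + \sum_{i=1,2}\int_{T_i}\Delta u_h^{(t)}\, q_e^{(t)}\, dx.
\end{align*}
Using $a(u-u_h^{(t)},q_e^{(t)}) = \langle\sigma-\sigma_h^{(t)},q_e^{(t)}\rangle + \int_\Omega(f-\sigma_h^{(t)})q_e^{(t)}\,dx - a(u_h^{(t)},q_e^{(t)})$ from \eqref{CCC} together with $q_e^{(t)}\in H^1_0(\Omega)$, I would rewrite the right-hand side as $-\langle\sigma-\sigma_h^{(t)},q_e^{(t)}\rangle - a(u-u_h^{(t)},q_e^{(t)}) + \sum_{i}\int_{T_i}(\Delta u_h^{(t)}+f-\sigma_h^{(t)})q_e^{(t)}\,dx$. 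The last (volume-residual) term is already controlled by the previous lemma (it is bounded by $h_T^{-2}$ times the quantities on the right-hand side), the first term is bounded by $\|\sigma-\sigma_h^{(t)}\|_{-2,\infty,\Omega_e}|q_e^{(t)}|_{2,1,\Omega_e}$ by definition of the $-2,\infty$ norm, and the second by $\|u-u_h^{(t)}\|_{L_\infty(\Omega_e)}|q_e^{(t)}|_{2,1,\Omega_e}$ after integrating by parts twice. Then norm-equivalence on the finite-dimensional polynomial space on $e$ gives $\|\sjump{\nabla u_h^{(t)}}\|_{L_2(e)}^2 \lesssim \int_e\sjump{\nabla u_h^{(t)}}q_e^{(t)}\,ds$, the inverse/trace inequalities of Lemma \ref{inverse} convert between $L_\infty(e)$, $L_2(e)$ and $\Omega_e$-norms with the correct powers of $h_e$, and the bubble structure gives $|q_e^{(t)}|_{2,1,\Omega_e} \lesssim h_e^{d-2}\|\sjump{\nabla u_h^{(t)}}\|_{L_\infty(e)}$ and $\|q_e^{(t)}\|_{L_1(\Omega_e)}\lesssim h_e^{d-1}\|\sjump{\nabla u_h^{(t)}}\|_{L_\infty(e)}$; absorbing one power of the jump norm from both sides and tidying up the data term into $Osc(f,\Omega_e)$ yields the claim.

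The main obstacle I anticipate is the careful bookkeeping of scaling powers when passing from the one-dimensional face integral to volume integrals over $\Omega_e$ — in particular getting the $h_e$ in $h_e\|\sjump{\nabla u_h^{(t)}}\|_{L_\infty(e)}$ to emerge correctly after applying the trace inequality \eqref{3.1} (which costs $h_e^{-1}$) and the inverse estimates (which relate $L_\infty(e)$ to $L_2(e)$ via $h_e^{(1-d)/2}$ and second derivatives on $\Omega_e$ to $L_1$), together with the need to reuse the volume-residual lemma so that its contribution does not introduce an extra unbalanced power of $h_T$. A secondary technical point is constructing the extension of $\sjump{\nabla u_h^{(t)}}$ from $e$ into $\Omega_e$ as a polynomial that, multiplied by $b_e$, lies in $H^1_0(\Omega)$ and still controls $\|\sjump{\nabla u_h^{(t)}}\|_{L_\infty(e)}$; this is standard but must be stated. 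Everything else is a routine repetition of the bubble-function arguments already used in the two preceding lemmas.
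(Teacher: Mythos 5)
Your proposal is correct and follows essentially the same route as the paper's proof: the face-bubble test function $q_e^{(t)}=b_e\,\sjump{\nabla u_h^{(t)}}$ extended by zero to $\Omega$, elementwise integration by parts combined with the continuous relation \eqref{CCC} to produce the three terms $\langle\sigma_h^{(t)}-\sigma,q^{(t)}\rangle$, $\int_{\Omega_e}\nabla(u_h^{(t)}-u)\cdot\nabla q_e^{(t)}\,dx$ and the volume residual, the dual-norm/$L_\infty$ bounds with $|q_e^{(t)}|_{2,1,\Omega_e}$, norm equivalence plus the inverse estimates of Lemma \ref{inverse} for the $h_e$-scaling, and the preceding volume-residual lemma to absorb the $f+\Delta u_h^{(t)}-\sigma_h^{(t)}$ contribution. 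The one correction is the scaling $\|q_e^{(t)}\|_{L_1(\Omega_e)}\lesssim h_e^{d}\,\|\sjump{\nabla u_h^{(t)}}\|_{L_\infty}$ (not $h_e^{d-1}$), since $\Omega_e$ is a $d$-dimensional patch of measure $\sim h_e^d$; with this power the residual term emerges as $h_e^{2}\|f+\Delta u_h^{(t)}-\sigma_h^{(t)}\|_{L_\infty(\Omega_e)}$, which is exactly the quantity controlled by the previous lemma, and the stated bound follows.
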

\begin{proof}
		Let $b_e$ be the edge/face bubble function which is quadratic and continuous in $\Omega$ and assumes unit value at the center of edge/face $e$ and moreover, it satisfies $\| b_e\|_{L_{\infty}(\Omega_e)} \leq C.$ Let $q_e^{(t)} = \sjump{\nabla u_h^{(t)}}b_e$ on $\Omega_e$  where $\Omega_e=\bar{T}_{+} \cup \bar{T}_{-}$ with $T_{+}$ and $T_{-}$ are common elements corresponding to edge/face $e$. Next, we assume $q^{(t)}$ be the extension of $q_e^{(t)}$ by zero to $\Omega$ and observe $q^{(t)} \in H^1_0(\Omega)$. A use of Lemma \ref{inverse} provides
		\begin{align} \label{eq16}
			h_e\|\sjump{\nabla u_h^{(t)}}\|_{L_{\infty}(e)} & \lesssim h_e^{\frac{3-d}{2}} \|\sjump{\nabla u_h^{(t)}}\|_{L_{2}(e)}, \nonumber\\ 	h_e^2\|\sjump{\nabla u_h^{(t)}}\|_{L_{\infty}(e)}^2 & \lesssim h_e^{3-d} \|\sjump{\nabla u_h^{(t)}}\|_{L_{2}(e)}^2 \quad \text{(squaring both sides)} 
		\end{align}and due to the equivalence of norms in finite dimensional normed spaces we have an existence of a positive constant $C$ such that
		\begin{align} \label{eq17}
		h_e^{3-d} \|\sjump{\nabla u_h^{(t)}}\|_{L_{2}(e)}^2  \leq Ch_e^{3-d}  \int_e  \sjump{\nabla u_h^{(t)}} q_e^{(t)} ~ds.
		\end{align}
		A use of \eqref{CCC} and inverse inequalities (Lemma \ref{inverse}) yield
		\begin{align} \label{eq13}
		h_e^{3-d}  \int_e  \sjump{\nabla u_h^{(t)}} q_e^{(t)} ~ds &=  h_e^{3-d} \Bigg(\int_{\Omega_e}  {\nabla u_h^{(t)}}\cdot \nabla q_e^{(t)} ~dx +\int_{\Omega_e} \Delta u_h^{(t)} q_e^{(t)}  ~dx \Bigg) \nonumber \\
		&= h_e^{3-d} \Bigg(  (f,q^{(t)}) -a(u,q^{(t)})-\langle \sigma,q^{(t)} \rangle+\int_{\Omega_e}  {\nabla u_h^{(t)}}\cdot \nabla\phi_e ~dx \nonumber \\ & \hspace{3cm}+\int_{\Omega_e} \Delta u_h^{(t)} q_e^{(t)}  ~dx \Bigg)  \nonumber \\
		&= h_e^{3-d} \Bigg( \langle\sigma_h^{(t)}-\sigma,q^{(t)} \rangle +\int_{\Omega_e}  \nabla (u_h^{(t)}-u)\cdot \nabla q_e^{(t)} ~dx \nonumber \\ & \hspace{3cm }+\int_{\Omega_e} (\Delta u_h^{(t)}+f-\sigma_h^{(t)}) q_e^{(t)}  ~dx \Bigg)\nonumber  \\
		&\lesssim h_e^{3-d} \Bigg( \Big(\|\sigma-\sigma_h^{(t)}||_{-2,\infty,\Omega_e} \| + \|u-u_h^{(t)}\|_{L_{\infty}(\Omega_e)} \Big) |D^2(q_e^{(t)})|_{L_1(\Omega_e)} \nonumber \\& \quad+  \|({f}+\Delta u_h^{(t)}-\sigma_h^{(t)}) \|_{L_{\infty}(\omega_e)} \|q_e^{(t)}\|_{L^1(\Omega_e)}\Bigg). 
		\end{align}
	Using the structure of $q^{(t)}_e$ and Lemma \ref{inverse}, we get
	\begin{align} \label{eq14}
	|D^2(q_e^{(t)})|_{L_1(\Omega_e)}& \lesssim h^{-2}_e ||q_e^{(t)}||_{L^1(\Omega_e)} \lesssim h^{d-2}_e||q_e^{(t)}||_{L_{\infty}(\Omega_e)}, \nonumber \\  &\lesssim  h^{d-2}_e|| \sjump{\nabla u_h^{(t)}}||_{L_{\infty}(\Omega_e)}
	\end{align} and
	\begin{align} \label{eq15}
	\|q_e^{(t)}\|_{L^1(\Omega_e)} & \lesssim h_e^d ||q_e^{(t)}||_{L_{\infty}(\Omega_e)} \lesssim h_e^d ||\sjump{\nabla u_h^{(t)}}||_{L_{\infty}(\Omega_e)}.
	\end{align}We have the following estimate using equations \eqref{eq13}, \eqref{eq14} and \eqref{eq15}
	\begin{align} \label{eq18}
	h_e^{3-d}  \int_e  \sjump{\nabla u_h^{(t)}} q_e^{(t)} ~ds &  \lesssim h_e^{3-d} \Bigg( h_e^{d-2}\bigg(\|\sigma-\sigma_h^{(t)}||_{-2,\infty,\Omega_e} \| + \|u-u_h^{(t)}\|_{L_{\infty}(\Omega_e)} \bigg) \nonumber \\& \quad+ h_e^d \|({f}+\Delta u_h^{(t)}-\sigma_h^{(t)}) \|_{L_{\infty}(\omega_e)} \Bigg)||\sjump{\nabla u_h^{(t)}}||_{L_{\infty}(\Omega_e)}. 
	\end{align}Finally, we infer the desired estimate by using equations \eqref{eq16}, \eqref{eq17}, \eqref{eq18} and \eqref{eq19}.
\end{proof}
\begin{lemma}
For $T\in \cT_h$, it holds that
\begin{equation*}
h_T^2||\sigma_h^{(2)}-Q_h\sigma_h^{(2)}||_{L_{\infty}(T)} \lesssim h_T^2 \|\Delta u_h^{(2)}+f-\sigma_h^{(2)}\|_{L_{\infty}(T)}+Osc(f,{T}).
\end{equation*}
	\end{lemma}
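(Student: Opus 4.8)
The plan is to extract the oscillation $\sigma_h^{(2)}-Q_h\sigma_h^{(2)}$ directly from the volume residual, using that on a fixed $T\in\cT_h$ the residual $\Delta u_h^{(2)}+f-\sigma_h^{(2)}$ agrees with $-\sigma_h^{(2)}$ up to a term that is constant on $T$; no bubble functions, inverse estimates or error terms will be needed. Concretely, since $u_h^{(2)}|_T\in P_2(T)$ the function $\Delta u_h^{(2)}|_T$ is a constant, while $\sigma_h^{(2)}|_T\in P_1(T)$ is affine. I would then choose $\bar{f}\in P_0(T)$ with $\|f-\bar{f}\|_{L_{\infty}(T)}=\min_{c\in P_0(T)}\|f-c\|_{L_{\infty}(T)}$, so that $h_T^2\|f-\bar{f}\|_{L_{\infty}(T)}=Osc(f,T)$, and set $g:=\Delta u_h^{(2)}+\bar{f}-\sigma_h^{(2)}$, which is affine on $T$.

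The crux is the observation that $\Delta u_h^{(2)}+\bar{f}$ is constant on $T$, so $Q_h(g)|_T=\Delta u_h^{(2)}+\bar{f}-Q_h(\sigma_h^{(2)})|_T$ and hence $g-Q_h g=-(\sigma_h^{(2)}-Q_h\sigma_h^{(2)})$ pointwise on $T$. Since $\|Q_h g\|_{L_{\infty}(T)}=|Q_T(g)|\le\|g\|_{L_{\infty}(T)}$, this yields $\|\sigma_h^{(2)}-Q_h\sigma_h^{(2)}\|_{L_{\infty}(T)}=\|g-Q_h g\|_{L_{\infty}(T)}\le 2\|g\|_{L_{\infty}(T)}$.

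To conclude I would apply the triangle inequality $\|g\|_{L_{\infty}(T)}\le\|\Delta u_h^{(2)}+f-\sigma_h^{(2)}\|_{L_{\infty}(T)}+\|f-\bar{f}\|_{L_{\infty}(T)}$, multiply through by $h_T^2$, and recognize $h_T^2\|f-\bar{f}\|_{L_{\infty}(T)}=Osc(f,T)$. There is no serious obstacle here: everything is elementary once one notices that the Laplacian term and a piecewise-constant surrogate for $f$ drop out of $g-Q_h g$. The only point needing care is that $\Delta u_h^{(2)}|_T$ genuinely is a constant, which uses the quadratic nature of the DG space and is exactly what makes $g$ affine with mean-zero part equal to $-(\sigma_h^{(2)}-Q_h\sigma_h^{(2)})$.
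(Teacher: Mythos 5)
Your argument is correct and is essentially the paper's own proof in a slightly repackaged form: both rest on the facts that $\Delta u_h^{(2)}|_T$ is constant (so it passes through $Q_h$), that $Q_h$ is $L_\infty$-stable on each element, and a triangle inequality separating the residual from the data oscillation; the only cosmetic difference is that you use the best constant $\bar f$ where the paper inserts $Q_h f$.
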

		\begin{proof}
		Using triangle inequality and simple calculations, we obtain
			\begin{align*}
			||\sigma_h^{(2)}-Q_h\sigma_h^{(2)}||_{L_{\infty}(T)} & =||\sigma_h^{(2)}+(\Delta u_h^{(2)}-\Delta u_h^{(2)})+(f-f)+(Q_h f-Q_h f)-Q_h\sigma_h^{(2)}||_{L_{\infty}(T)}, \\&\leq \|\Delta u_h^{(2)}+f-\sigma_h^{(2)}\|_{L_{\infty}(T)}+\|Q_h\sigma_h^{(2)}-\Delta u_h^{(2)}-Q_h f\|_{L_{\infty}(T)} \\ & \hspace{2cm} +\|f-Q_h f\|_{L_{\infty}(T)}, \\
			&=\|\Delta u_h^{(2)}+f-\sigma_h^{(2)}\|_{L_{\infty}(T)}++\|Q_h\sigma_h^{(2)}-Q_h\Delta u_h^{(2)}-Q_h f\|_{L_{\infty}(T)}\\ & \hspace{2cm}+\|f-Q_h f\|_{L_{\infty}(T)}, \\
			&=\|\Delta u_h^{(2)}+f-\sigma_h^{(2)}\|_{L_{\infty}(T)}+ \|Q_h({\sigma_h^{(2)}-\Delta u_h^{(2)}-
				f})\,\|_{L_{\infty}(T)} \\ & \hspace{2cm}+\|f-Q_h f\|_{L_{\infty}(T)}.
			\end{align*}
			Finally, we have the desired result
			\begin{align} \label{5.43}
			h_T^2||\sigma_h^{(2)}-Q_h\sigma_h^{(2)}||_{L_{\infty}(T)} \lesssim h_T^2 \|\Delta u_h^{(2)}+f-\sigma_h^{(2)}\|_{L_{\infty}(T)}+Osc(f,{T}).
			\end{align}
			Hence, the efficiency of the term $\eta_4 = \underset{T\in \mathcal{T}_h}{max} ~h_T^2||\sigma_h^{(2)}-Q_h\sigma_h^{(2)}||_{L_{\infty}(T)}$ follows from \eqref{eq19} and \eqref{5.43}.
		\end{proof}

\section{Numerical Experiments} \label{sec6} 
\noindent
In this section, we implemented our error estimator $\eta_h$ (defined in the Theorem \ref{main}) to different obstacle problems and demonstrate it's performance. The following standard adaptive algorithm is used for mesh refineme nt.
\begin{equation*}
{\bf SOLVE}\rightarrow  {\bf ESTIMATE} \rightarrow {\bf
	MARK}\rightarrow {\bf REFINE}
\end{equation*}
The discrete nonlinear problems (equations   \eqref{DDD} and \eqref{DDDD}) are solved using the primal-dual active set algorithm  \cite{hintermuller2002primal} in the step SOLVE. In the ESTIMATE step, we compute our proposed a posteriori error estimator $\eta_h$ on each $T \in \cT_h$ and later in the MARK step, we employ the maximum marking strategy \cite{verfurth1996review} with parameter $\Gamma=0.4$ which seems appropriate for the error control in the supremum norm. Finally, we refine the adaptive mesh and obtain a new mesh using the newest vertex bisection algorithm \cite{verfurth1996review}. In our article, we consider two DG formulations: SIPG and NIPG \cite{gudi2014posteriori} (Remark \ref{DGform}). We choose the penalty parameter $\eta=45$ for the SIPG and $\eta=20$ for the NIPG method. 

We discuss and present below numerical results for the quadratic $(\mathbb{P}_2)$ elements with both (integral and quadrature points) constraints and linear $(\mathbb{P}_1)$ elements with integral constraints in two dimensions.
	\begin{figure}
	\begin{subfigure}[b]{0.4\textwidth}
		\includegraphics[width=\linewidth]{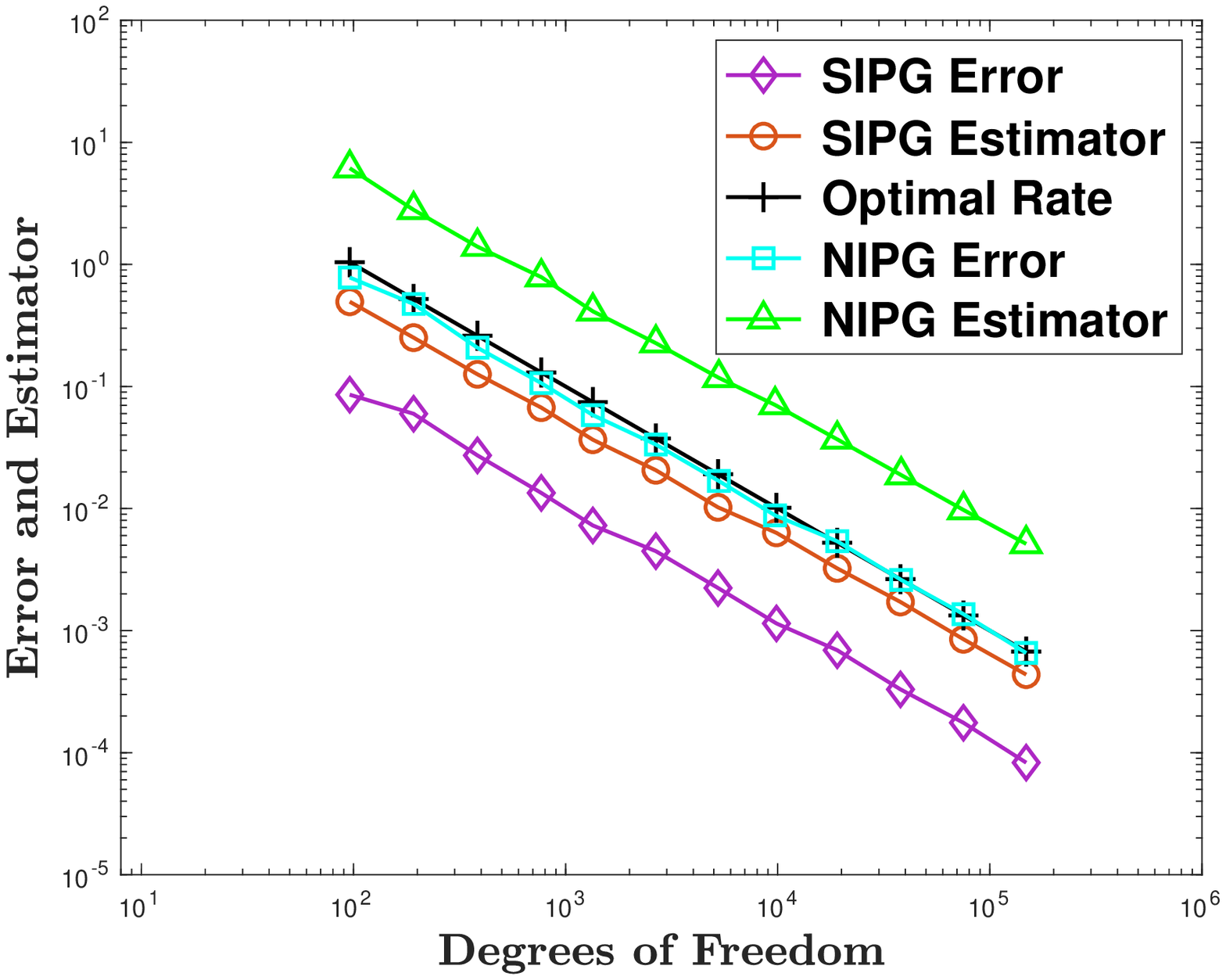}
		\caption{Error and Estimator}
		\label{fig:fig1}
	\end{subfigure}
	\begin{subfigure}[b]{0.4\textwidth}
		\includegraphics[width=\linewidth]{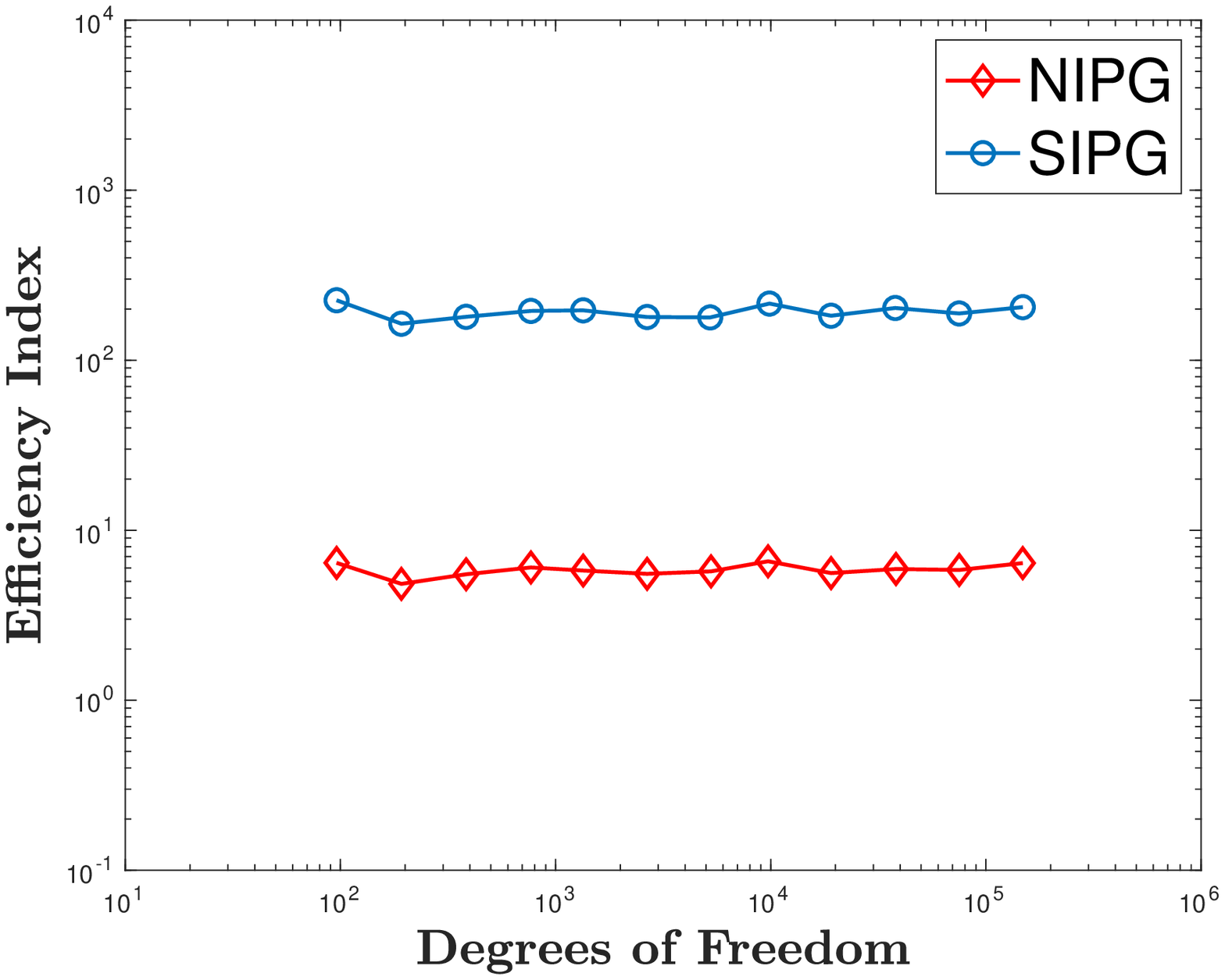}
		\caption{Efficiency Index}
		\label{fig:fig2}
	\end{subfigure}
	\caption{Error, Estimator and Efficiency Index of SIPG and NIPG methods ($\mathbb{P}_1$- Integral Constraints)  for Example \ref{ex1}. }\label{Fig1}
\end{figure} 
	
\begin{figure}
	\begin{subfigure}[b]{0.4\textwidth}
		\includegraphics[width=\linewidth]{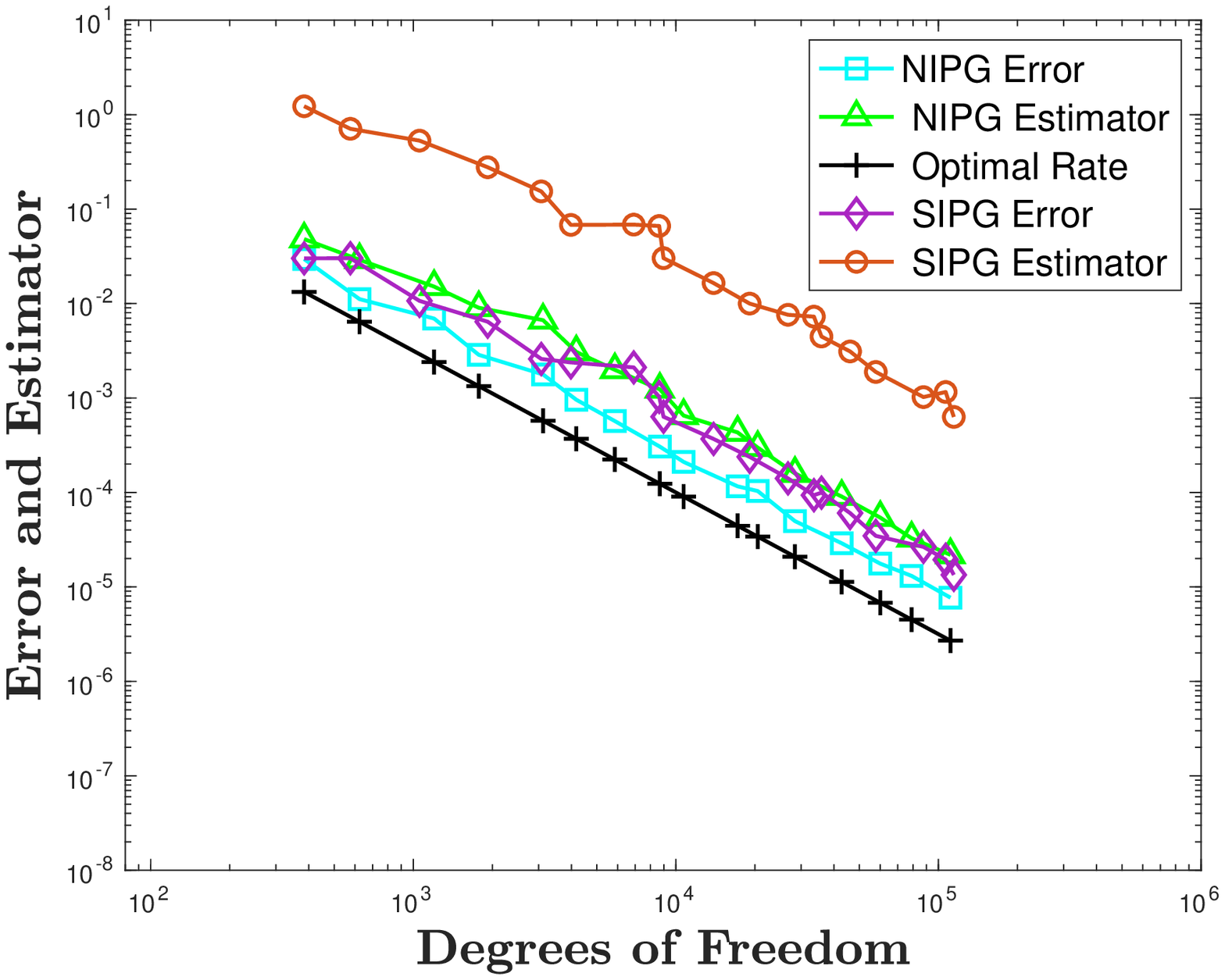}
			\caption{Error and Estimator}
		\label{fig:fig5}
	\end{subfigure}
	\begin{subfigure}[b]{0.4\textwidth}
		\includegraphics[width=\linewidth]{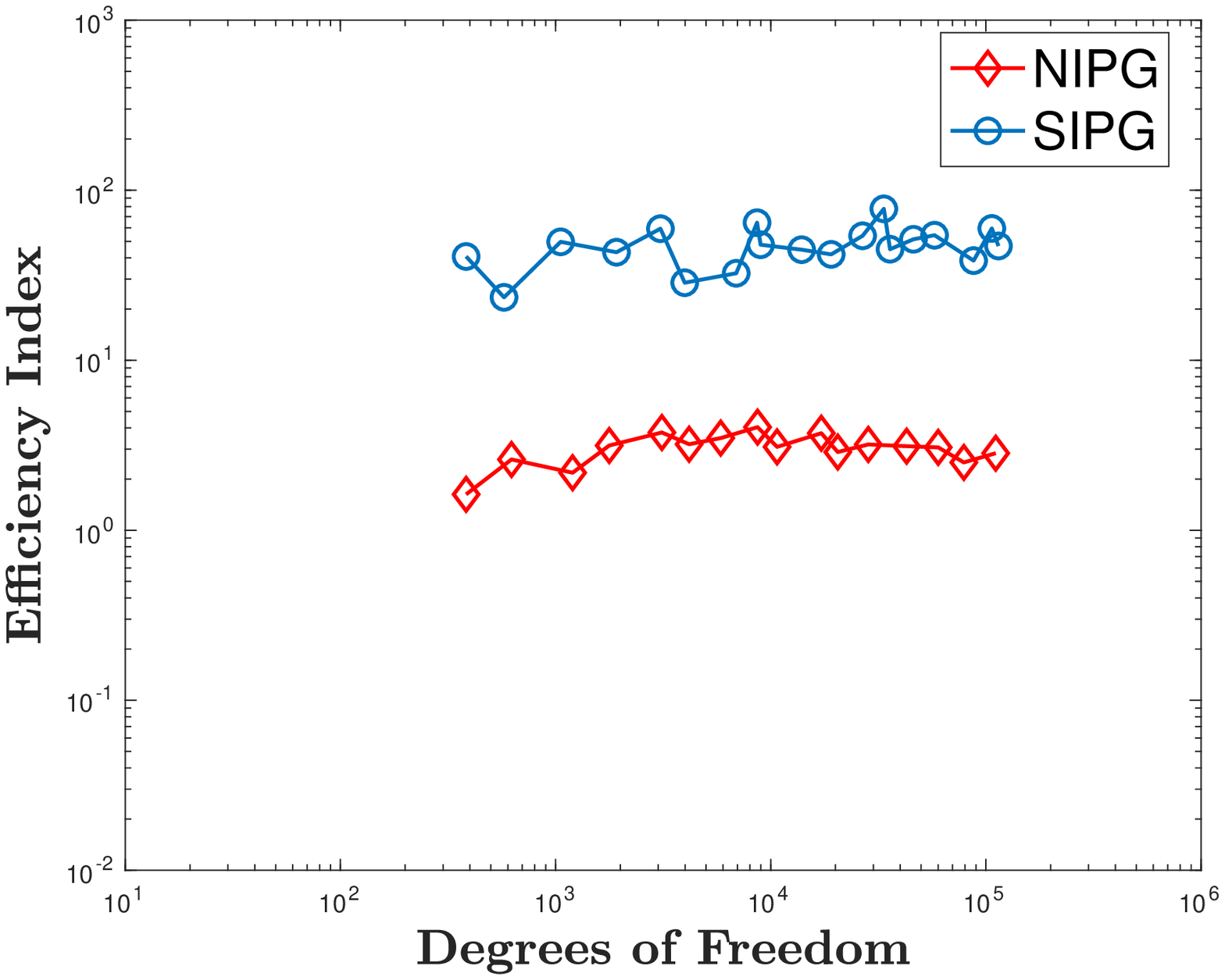}
			\caption{Efficiency Index}
		\label{fig:fig6}
	\end{subfigure}
	\caption{Error, Estimator and Efficiency Index of SIPG and NIPG methods ($\mathbb{P}_2$- Integral Constraints) for Example \ref{ex1}.}
	\label{Fig3}
\end{figure} 
\begin{figure}
	\begin{subfigure}[b]{0.4\textwidth}
		\includegraphics[width=\linewidth]{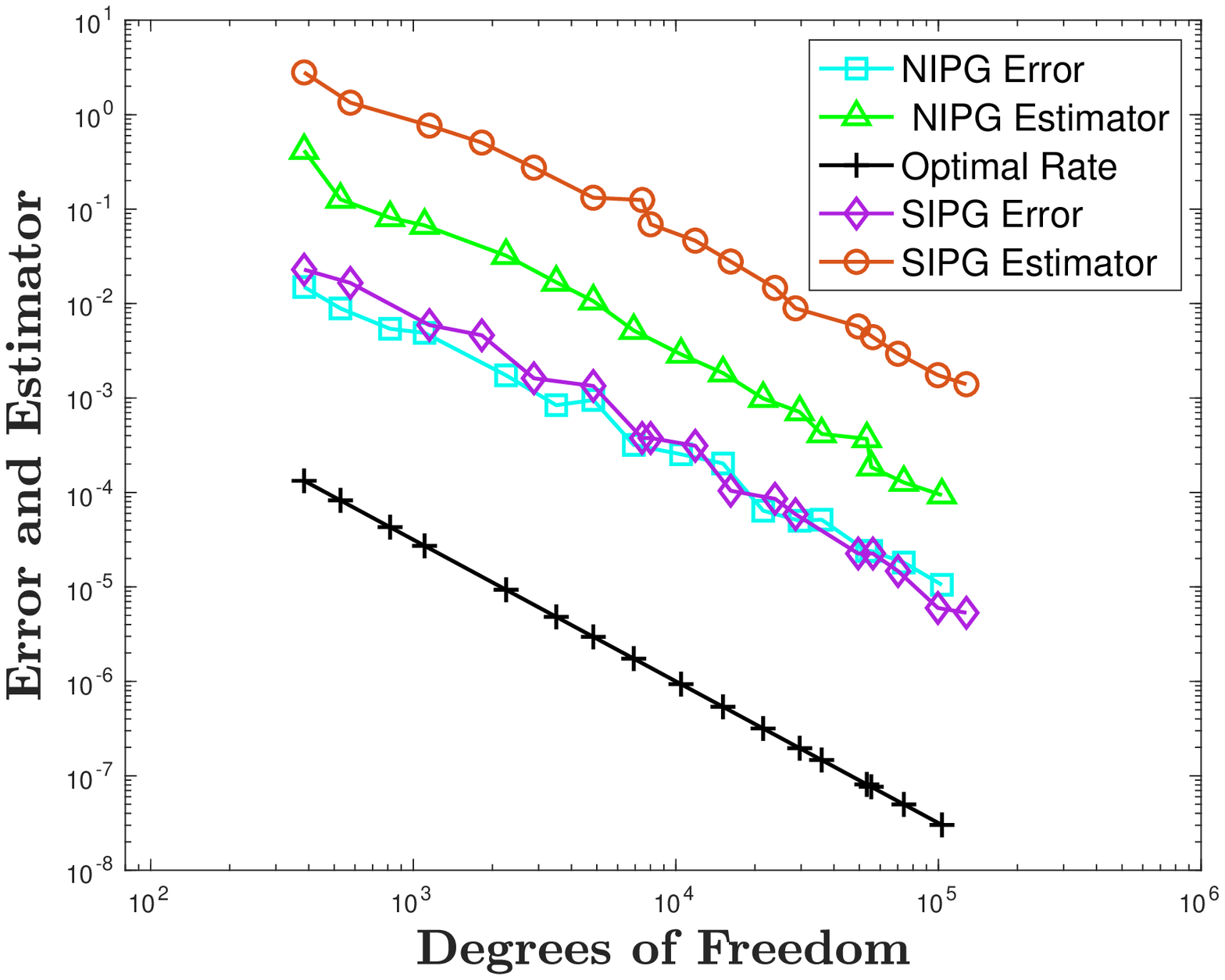}
			\caption{Error and Estimator}
		\label{fig:fig7}
	\end{subfigure}
	\begin{subfigure}[b]{0.4\textwidth}
		\includegraphics[width=\linewidth]{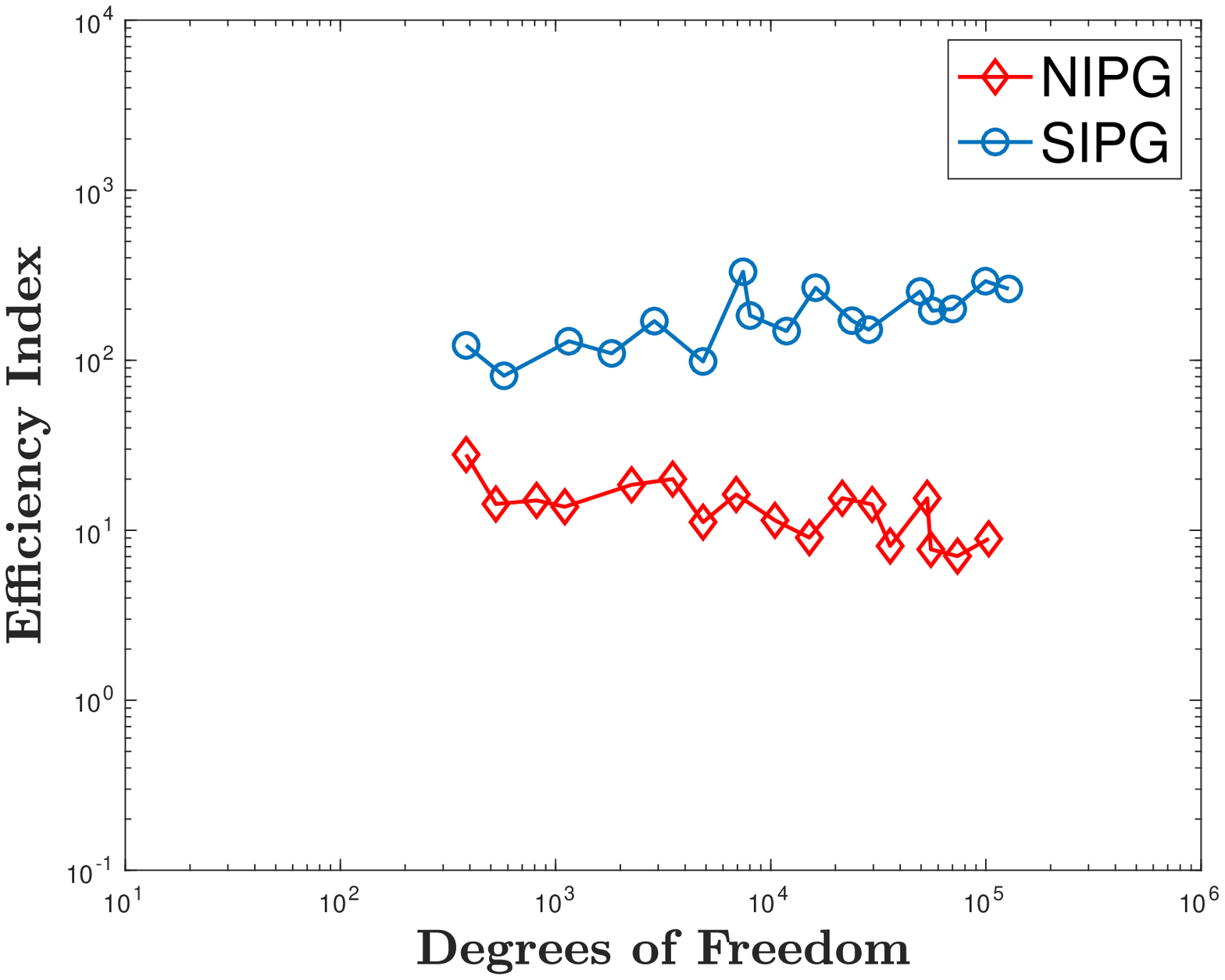}
			\caption{Efficiency Index}
		\label{fig:fig8}
	\end{subfigure}
	\caption{Error, Estimator and Efficiency Index of SIPG and NIPG methods ($\mathbb{P}_2$- Quadrature point Constraints) for Example \ref{ex1}. }\label{Fig4}
\end{figure} 
\begin{figure}
	\begin{subfigure}[b]{0.4\textwidth}
		\includegraphics[width=\linewidth]{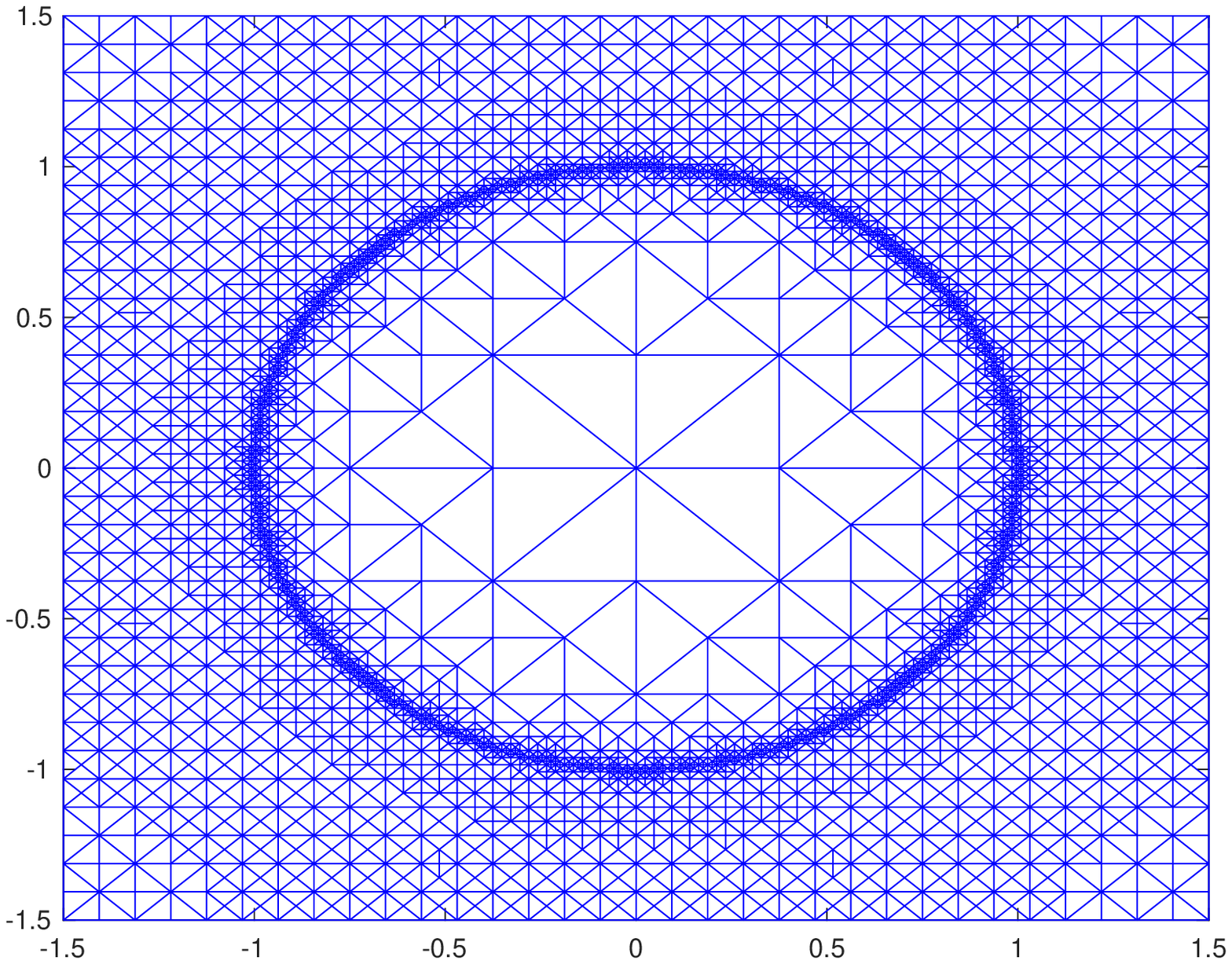}
		\caption{$\mathbb{P}_2$ (Quadrature point Constraints)}
		\label{fig:fig4}
	\end{subfigure}%
	\begin{subfigure}[b]{0.4\textwidth}
		\includegraphics[width=\linewidth]{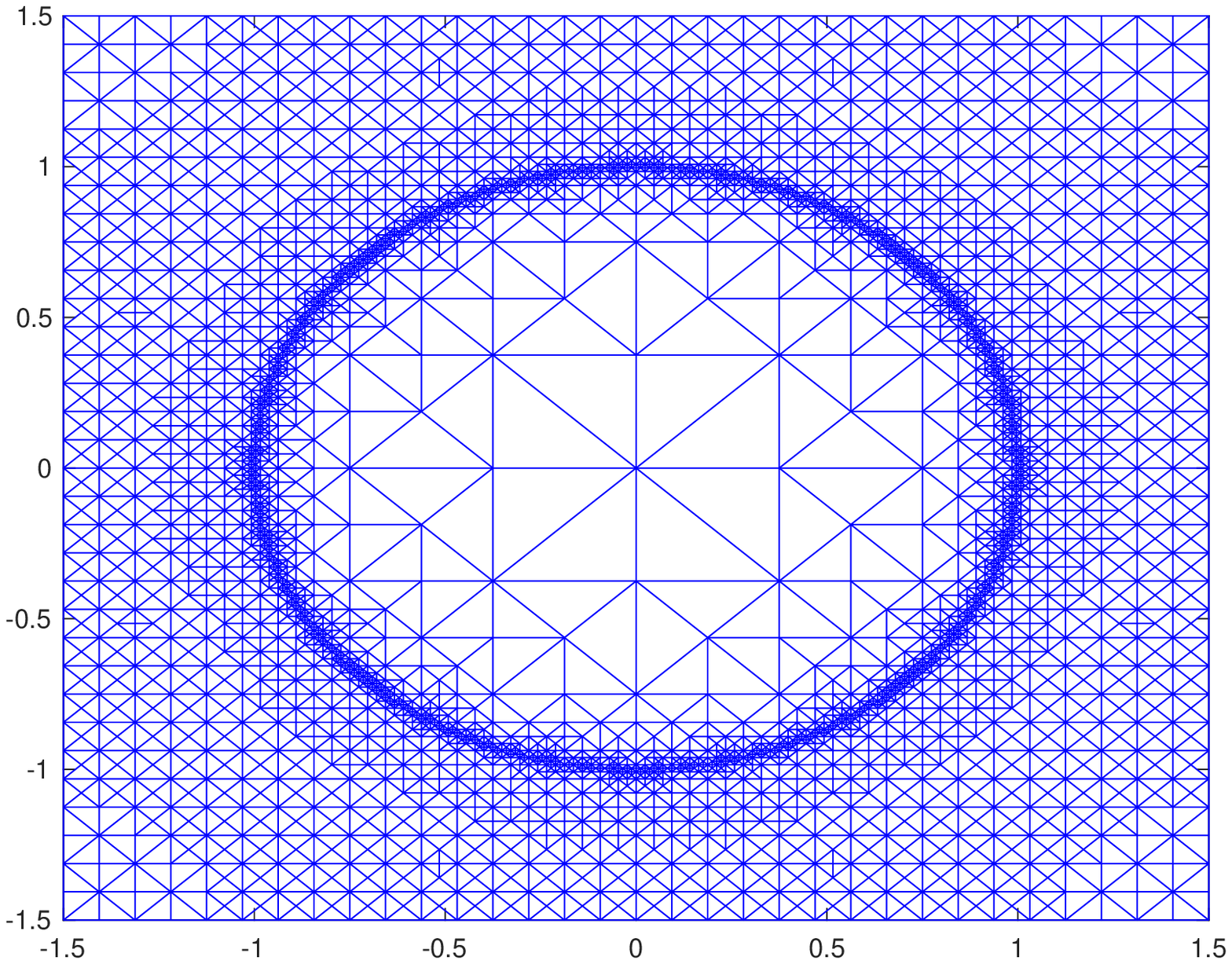}
		\caption{$\mathbb{P}_2$ (Integral Constraints)}
		\label{fig:fig4a}
	\end{subfigure}%
	\caption{Adaptive mesh (SIPG) for Example \ref{ex1}} \label{Fig2}
\end{figure}

\begin{example} \label{ex1}
	We consider a constant obstacle $\chi=0$ on the square $\Omega=(\frac{3}{2},-\frac{3}{2})^2$ (taken from the article \cite{bartels2004averaging})  and $f=-2$. The continuous solution of inequality \eqref{SSSS} is defined by
	\begin{equation*}
	u := \left\{ \begin{array}{ll} r^2/2-\text{ln}(r)-1/2, & r\geq 1\\\\
	0, & \text{otherwise},
	\end{array}\right.
	\end{equation*}
	where
	$r^2=x^2+y^2$ for $(x,y)\in \R^2$. Figure \ref{fig:fig1} shows the convergence behavior of the error estimator and the true error and  Figure \ref{fig:fig2} provides the efficiency indices of the error estimator for both SIPG and NIPG methods using $\mathbb{P}_1$-Integral constraints in the discrete convex set (recall equation \eqref{dset1}). We observe that the error and the estimator are converging at the optimal rate (1/DOFs) using $\mathbb{P}_1$ finite elements (with integral constraints) where DOFs stands for the degrees of freedom. For the $\mathbb{P}_2$ cases, Figures \ref{Fig3} and \ref{Fig4} describe that both error and estimator converge optimally with rate (DOFs)$^{-\frac{3}{2}}$ for SIPG and NIPG methods, respectively.  In these figures, we also depicted the efficiency indices for both DG methods (SIPG and NIPG) with quadratic elements. Lastly, adaptive mesh refinement at certain levels for SIPG method are shown in Figure \ref{Fig2} for $\mathbb{P}_2$ case. We observe that the mesh refinement is much higher near the free boundary. 

\end{example}
	\begin{figure}
	\begin{subfigure}[b]{0.4\textwidth}
		\includegraphics[width=\linewidth]{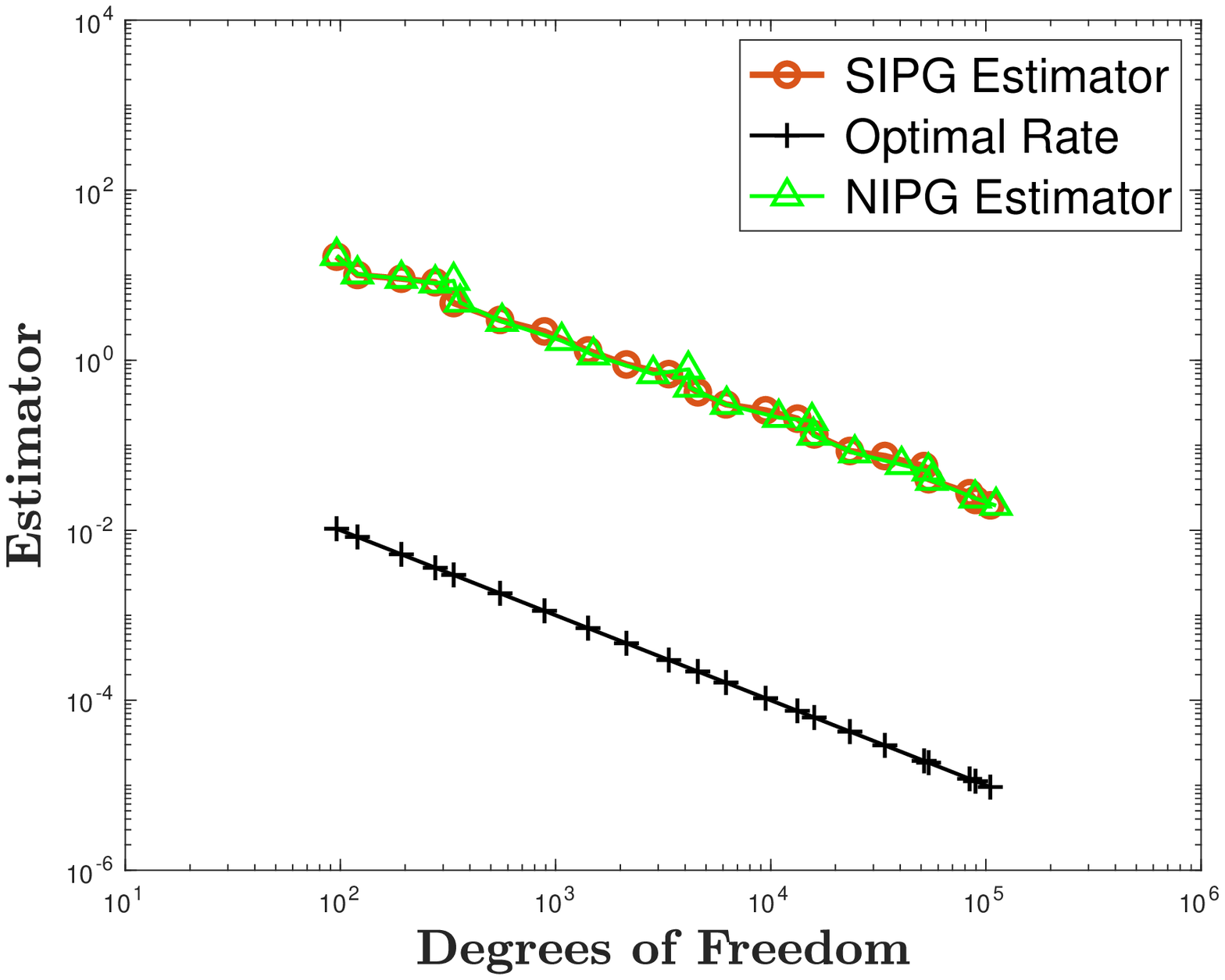}
		\label{fig:fig16}
	\end{subfigure}
	\begin{subfigure}[b]{0.4\textwidth}
	\includegraphics[width=\linewidth]{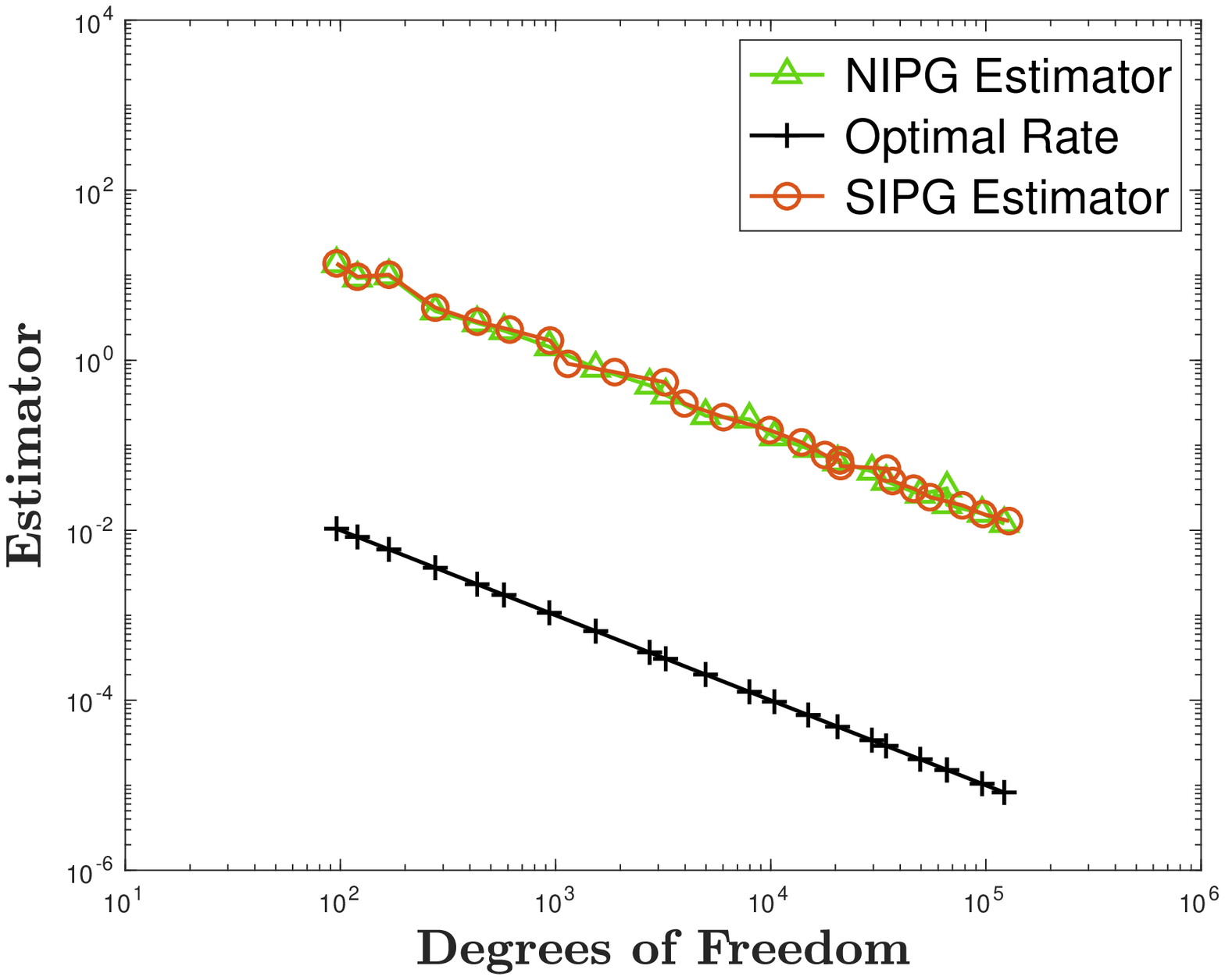}
	\label{fig:fig15}
\end{subfigure}
	\caption{Error and Estimator ($\mathbb{P}_1$- Integral Constraints) for Example \ref{ex3} for $f=-15$ and $f=0$. }\label{Fig8}
\end{figure} 	
	\begin{figure}
	\begin{subfigure}[b]{0.4\textwidth}
		\includegraphics[width=\linewidth]{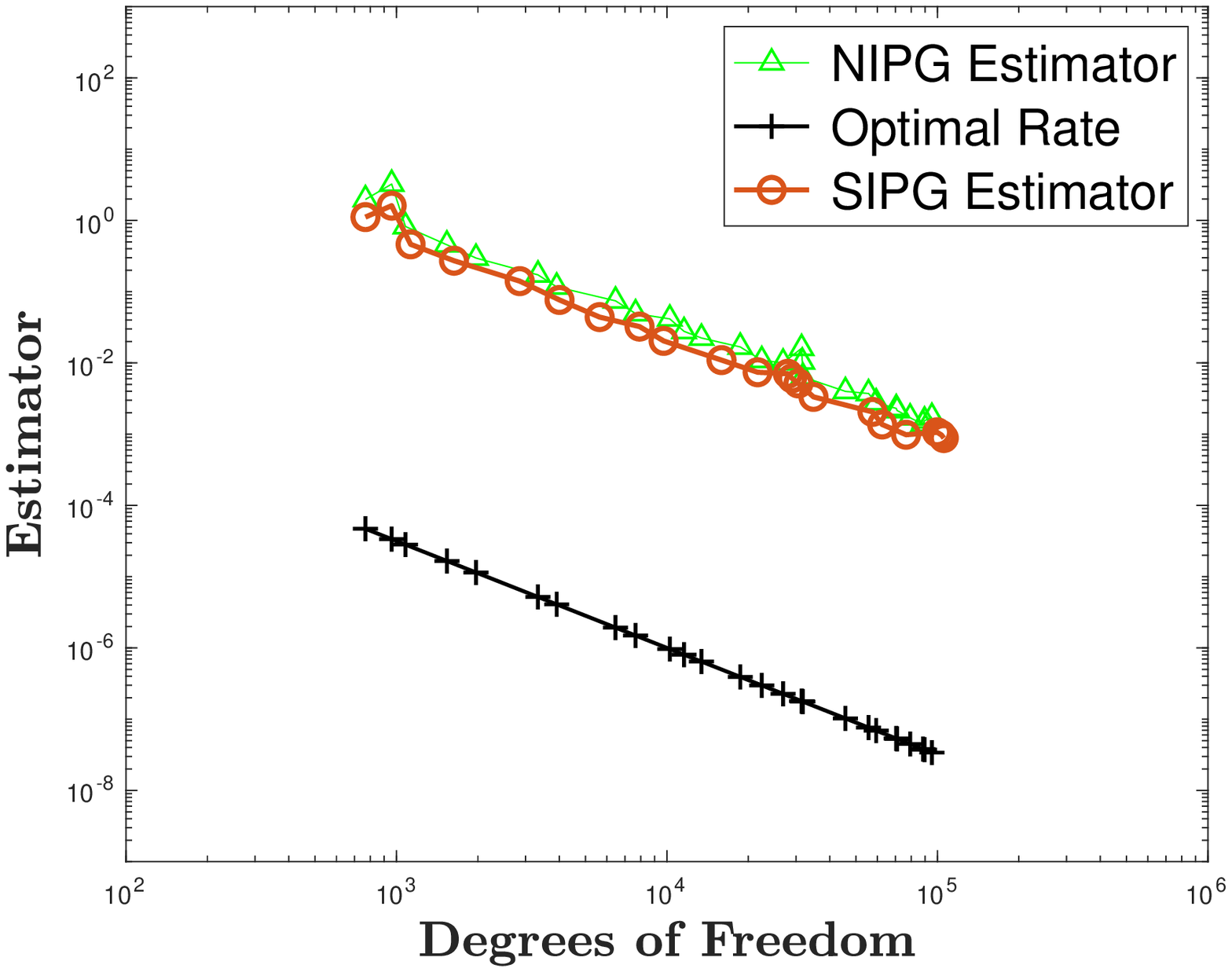}
		\label{fig:fig13}
	\end{subfigure}
	\begin{subfigure}[b]{0.4\textwidth}
		\includegraphics[width=\linewidth]{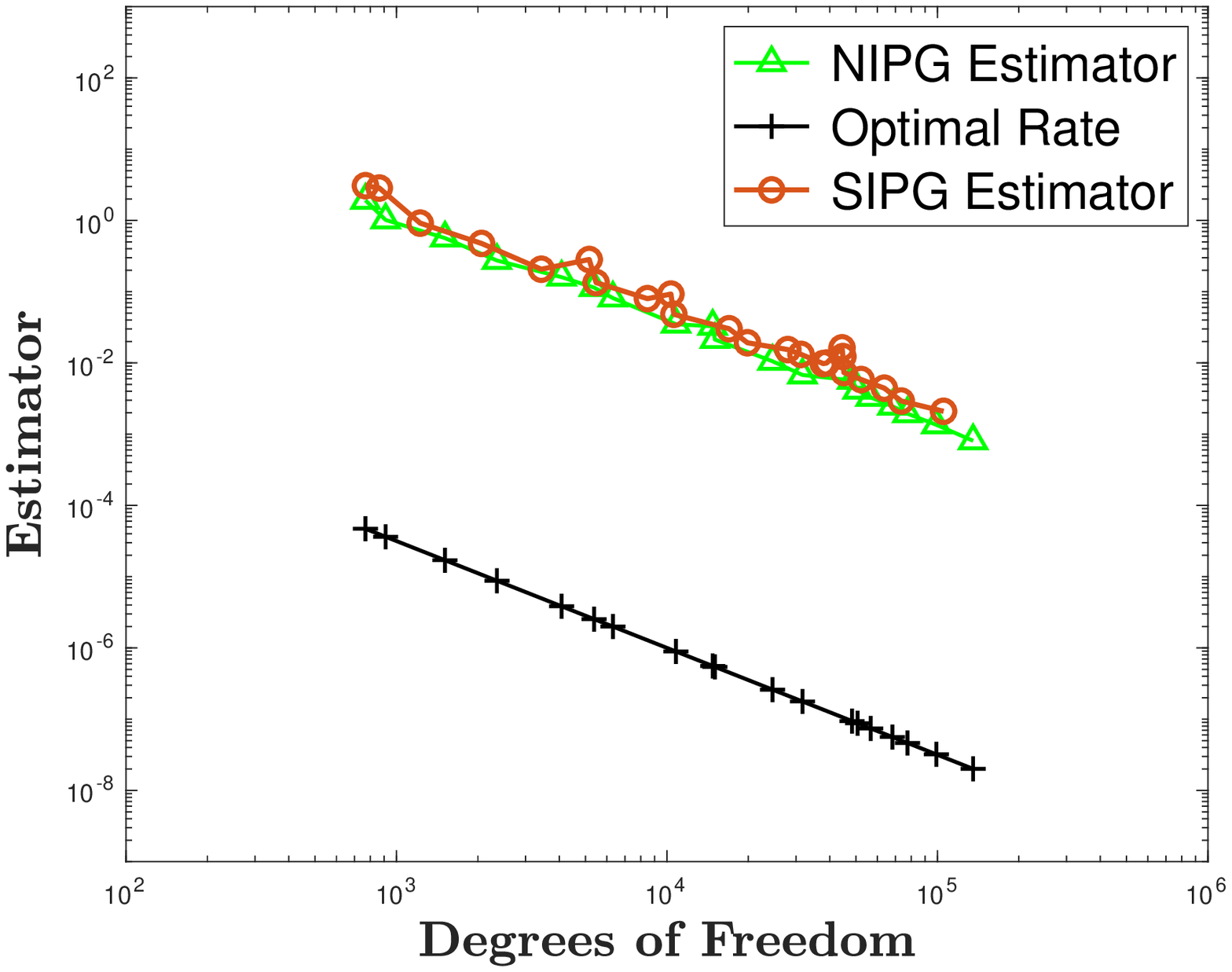}
		\label{fig:fig14}
	\end{subfigure}
	\caption{Error and Estimator ($\mathbb{P}_2$- Integral Constraints) for Example \ref{ex3} for $f=-15$ and $f=0$. }\label{Fig9}
\end{figure} 
\begin{example} \label{ex3} In this example, let $\Omega=(-2,2) \times (-1,1)$ and $\chi= 10 - 6 (x^2-1)^2-20(r^2-x^2)$ where $r^2=x^2+y^2~~ \forall (x,y) \in \Omega$. The exact solution is not known for this example \cite{nochetto2005fully}. Figure \ref{Fig8} illustrates the optimal convergence of the error estimator $\eta_h$ for the load functions $f=0$ and $f=-15$ using  integral constraints for linear $(\mathbb{P}_1)$ SIPG and NIPG methods.  In Figure \ref{Fig9} and \ref{Fig10}, we observe that the estimator $\eta_h$ converges with optimal rate (DOFs)$^{-\frac{3}{2}}$ for ($\mathbb{P}_2$) quadratic SIPG and NIPG methods, respectively. The adaptive meshes at refinement level 21 are shown in Figure \ref{Fig11} and \ref{Fig12} for quadratic SIPG method. We observe that the graph of the obstacle can be viewed as two hills connected by a saddle. As expected, there is a change in the contact region with the increase of load function $f$.
\end{example}
\begin{figure}
\begin{subfigure}[b]{0.4\textwidth}
	\includegraphics[width=\linewidth]{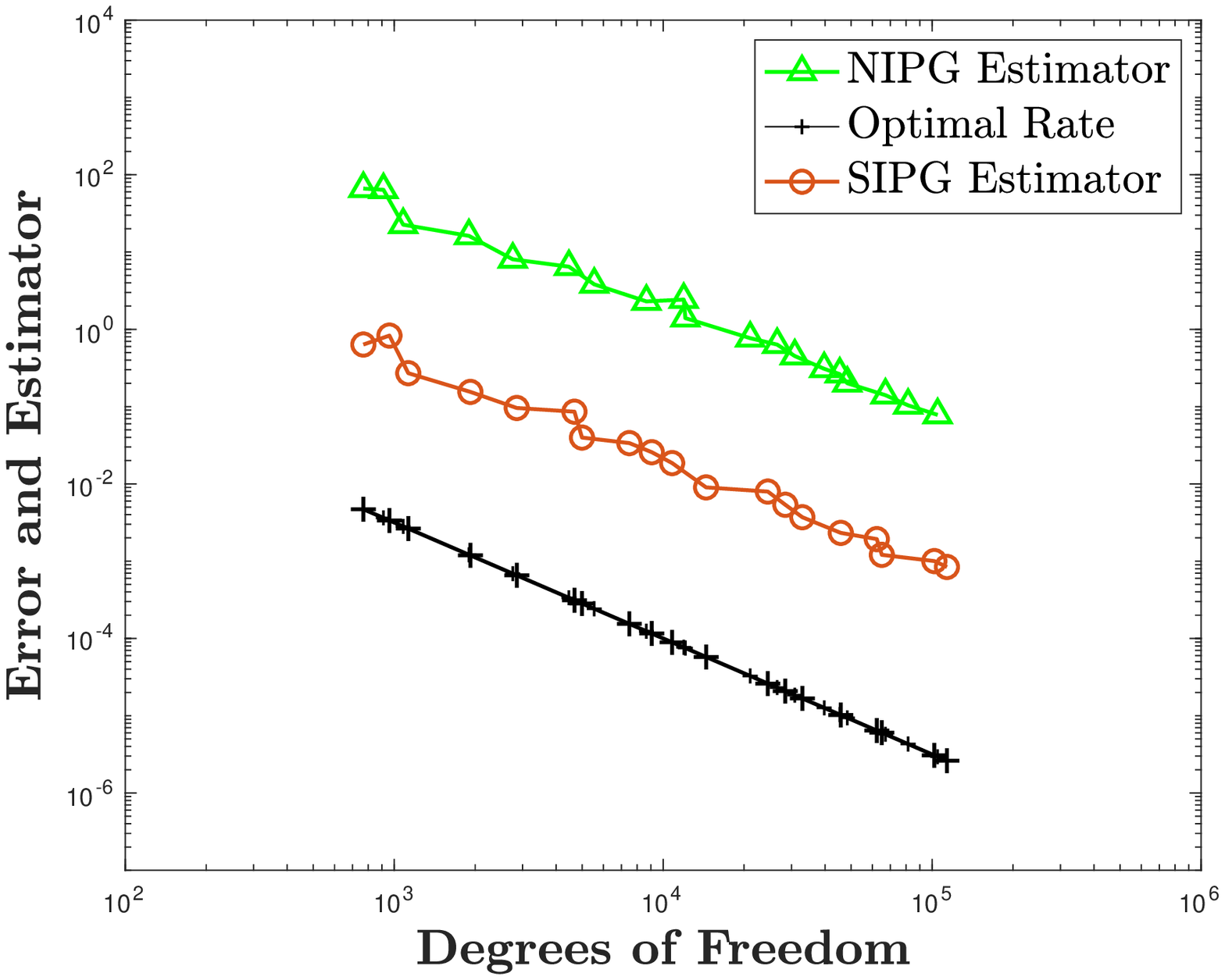}
	\label{fig:fig1188}
\end{subfigure}
\begin{subfigure}[b]{0.4\textwidth}
	\includegraphics[width=\linewidth]{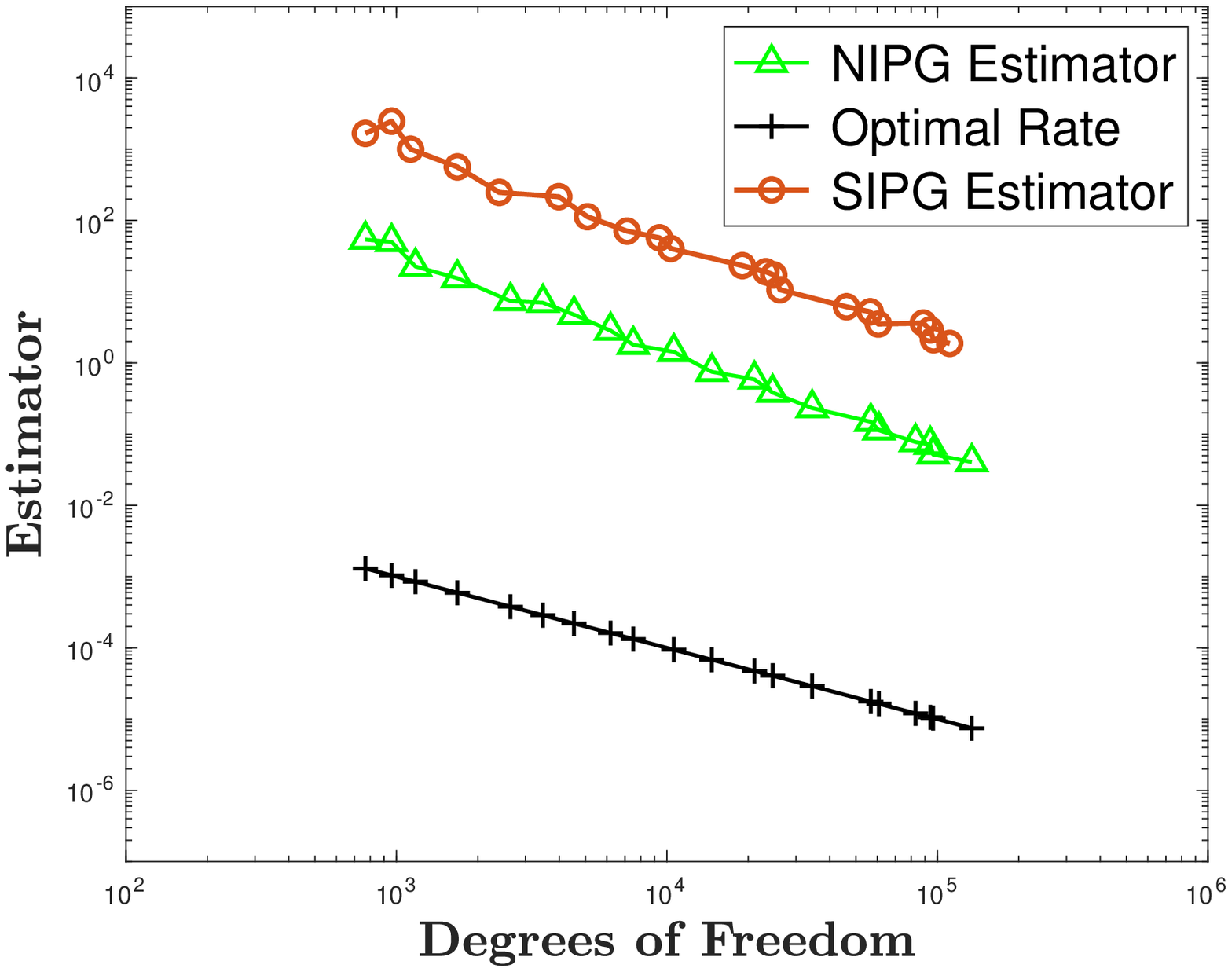}
	\label{fig:fig17}
\end{subfigure}
\caption{Error and Estimator ($\mathbb{P}_2$- Quadrature point Constraints) for Example \ref{ex3} for $f=-15$ and $f=0$ }\label{Fig10}
\end{figure}
\begin{figure}
	\begin{subfigure}[b]{0.4\textwidth}
		\includegraphics[width=\linewidth]{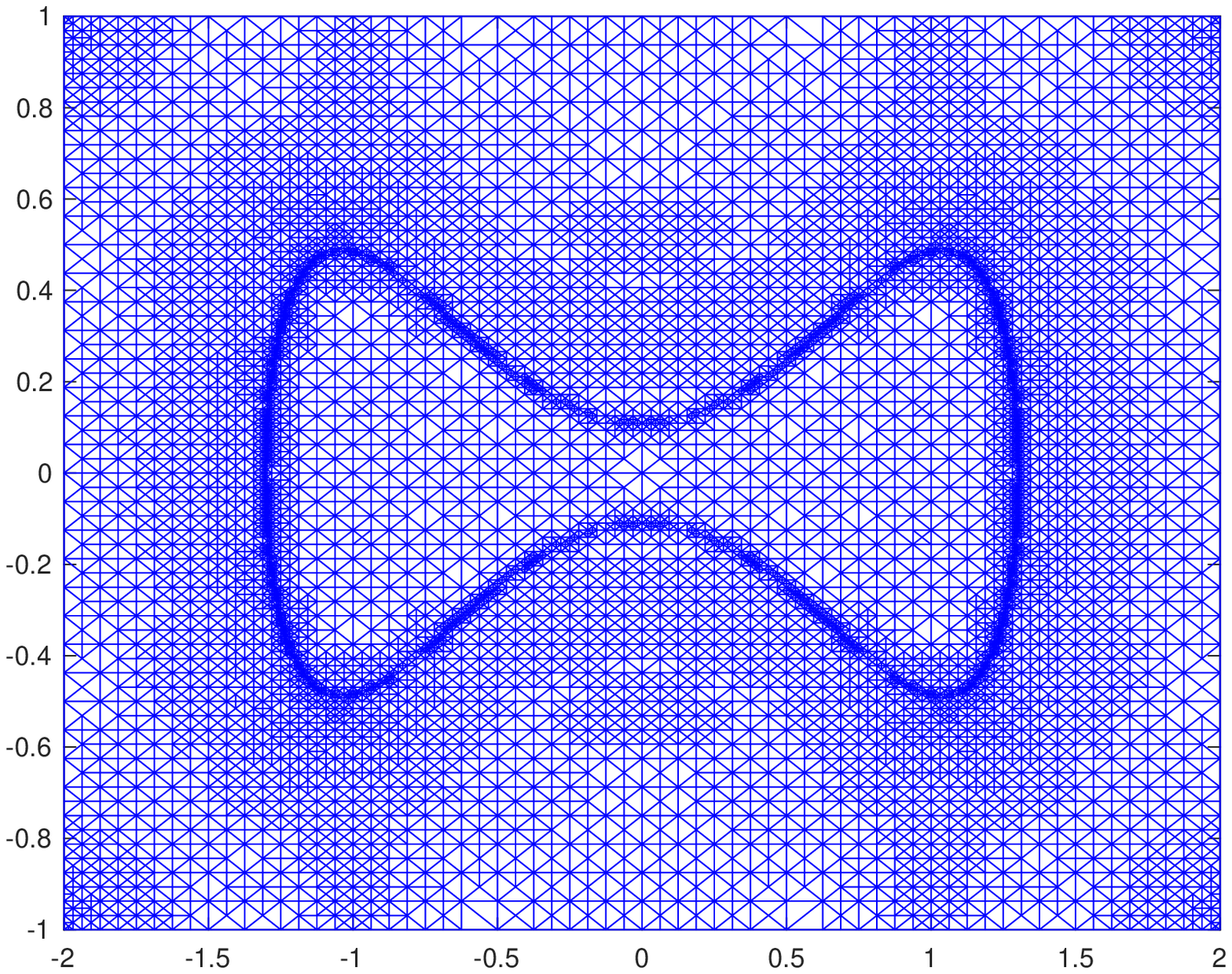}
		\caption{SIPG $(f=-15)$}
		\label{fig:fig1111}
	\end{subfigure}
	\begin{subfigure}[b]{0.4\textwidth}
		\includegraphics[width=\linewidth]{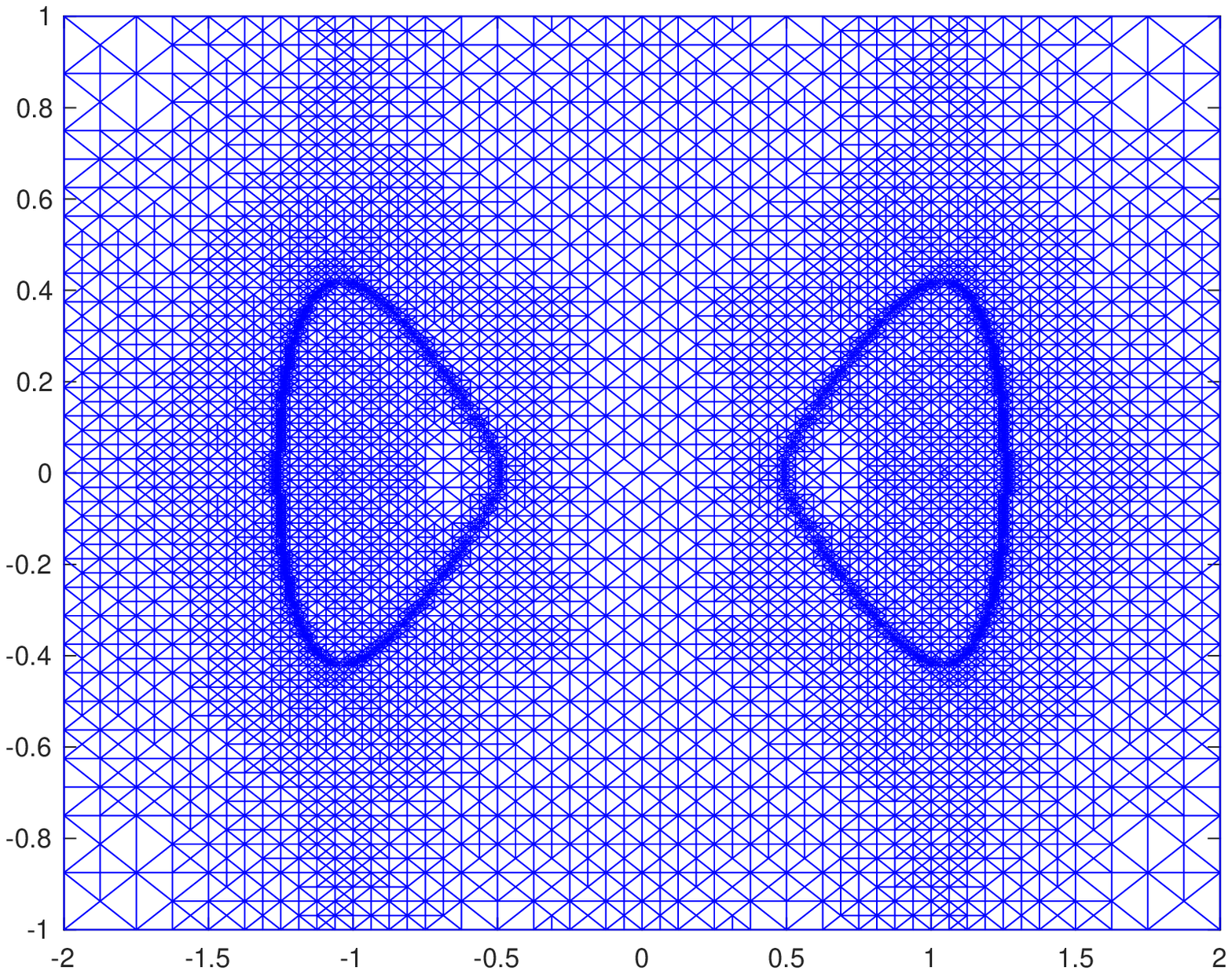}
		\caption{SIPG $(f=0)$}
		\label{fig:fig12128}
	\end{subfigure}
	\caption{Adaptive mesh ($\mathbb{P}_2$- Integral Constraints) for Example \ref{ex3}}\label{Fig11}
\end{figure} 
\begin{figure}
	\begin{subfigure}[b]{0.4\textwidth}
		\includegraphics[width=\linewidth]{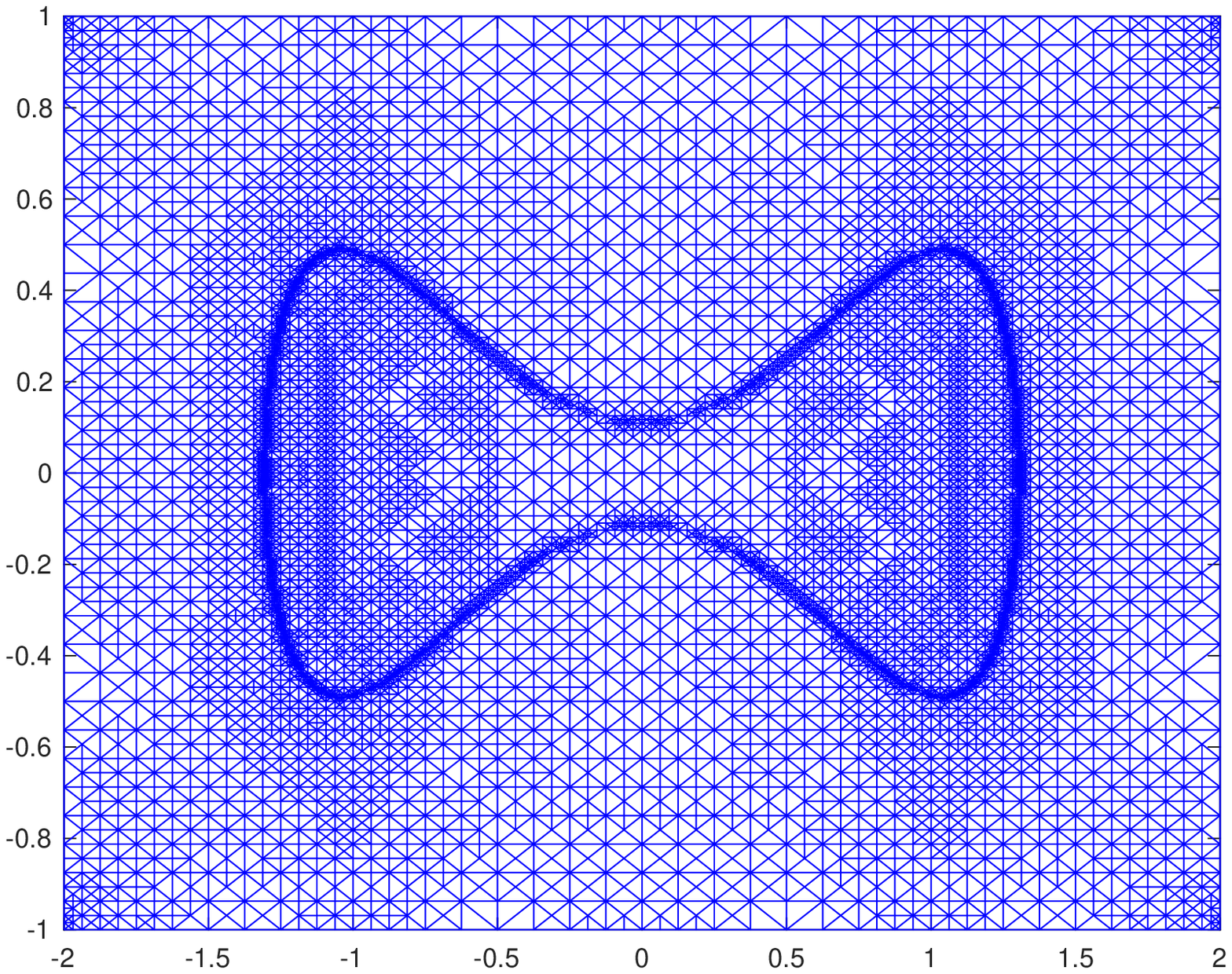}
			\caption{SIPG $(f=-15)$}
		\label{fig:fig117}
	\end{subfigure}
	\begin{subfigure}[b]{0.4\textwidth}
		\includegraphics[width=\linewidth]{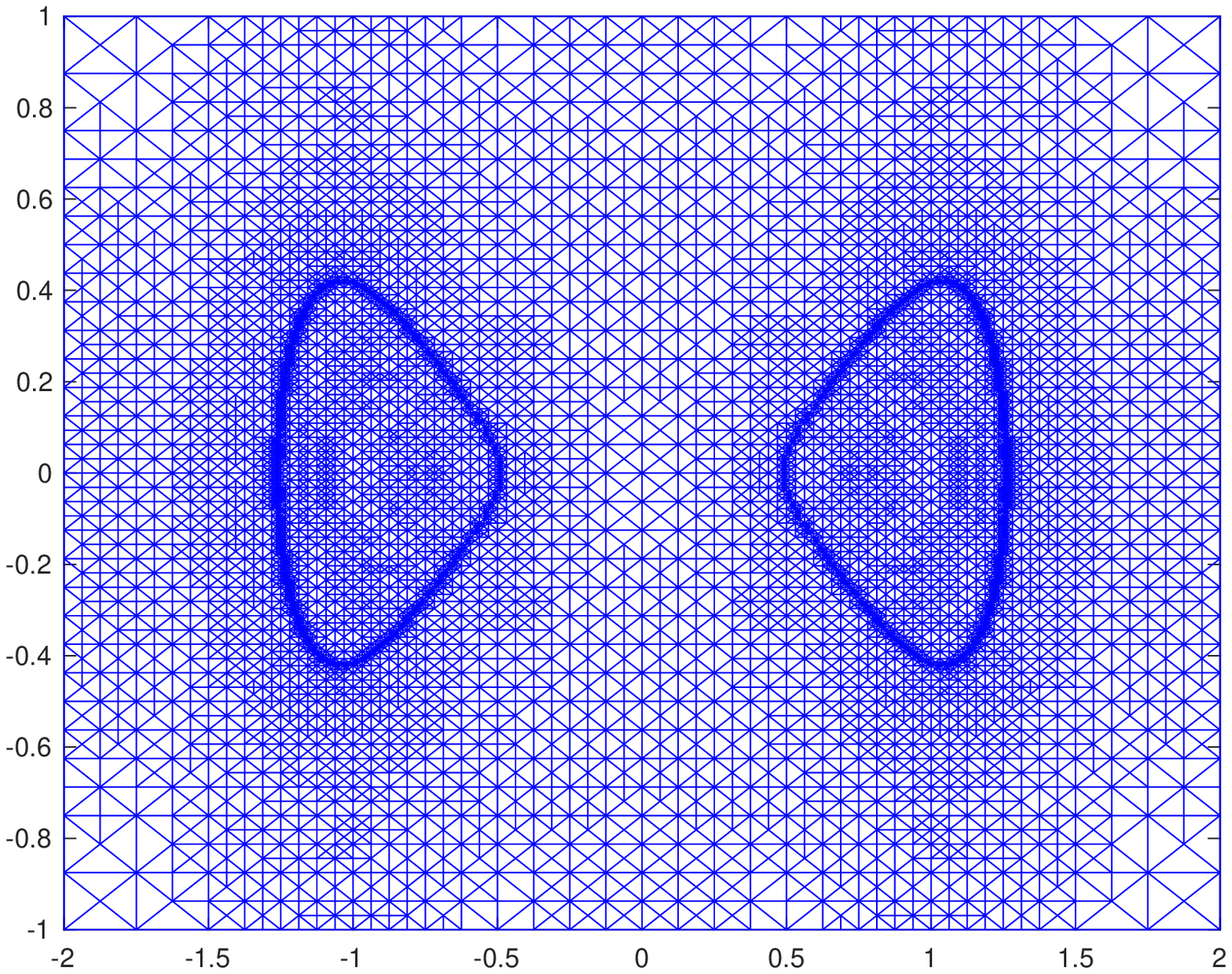}
			\caption{SIPG $(f=0)$}
		\label{fig:fig118}
	\end{subfigure}
	\caption{Adaptive mesh ($\mathbb{P}_2$- Quadrature point Constraints) for Example \ref{ex3}}\label{Fig12}
\end{figure} 


%
%

\bibliographystyle{unsrt}
\bibliography{rohiii}
\end{document}